\newtheorem{thm}{Theorem}
\newtheorem{remark}{Remark}
\newtheorem{proposition}{Proposition}
\newtheorem{example}{Example}
\newtheorem{lemma}{Lemma}
\newtheorem{definition}{Definition}
\newtheorem{corollary}{Corollary}
\newtheorem{assumption}{Assumption}
\newcommand{\eps}{\varepsilon}
\newcommand{\Wproc}{(W^k)_{k\in\mathbb{N}}} 
\newcommand{\nos}[1]{\Vert#1\Vert^2} 
\newcommand{\newt}[1]{{\color{blue}#1}} 
\newcommand{\be}[1]{\vert#1\vert} 
\newcommand{\Xe}{X}
\newcommand{\dd}{\mathrm{d}}
\newcommand{\into}{\int_{\mathcal{O}}}
\newcommand{\ska}[1]{\left( #1 \right)} 
\newcommand{\vh}{v_h}
\newcommand{\fe}[1]{\frac{\nabla #1}{ \sqrt{ \vert \nabla #1 \vert^2 +\eps^2}}} 
\newcommand{\E}[1]{\mathbb{E}\left[#1\right]} 
\begin{document}

\title[Numerical approximation of stochastic TV flow]{Numerical approximation of probabilistically weak {and strong} solutions of the stochastic {total variation} flow}
\date{\today}

\author{\v{L}ubom\'{i}r Ba\v{n}as}
\address{Department of Mathematics, Bielefeld University, 33501 Bielefeld, Germany}
\email{banas@math.uni-bielefeld.de}

\author{Martin Ondrej\'at}
\address{Institute of Information Theory and Automation, Pod Vod\'arenskou v\v e\v z\'{\i} 4, CZ-182 00, Praha 8, Czech Republic}
 \email{ondrejat@utia.cas.cz}

\thanks{The first named author was funded by the Deutsche Forschungsgemeinschaft (DFG, German Research Foundation) - SFB 1283/2 2021 - 317210226 while the second named author by the Czech Science Foundation grant no. 22-12790S}

\begin{abstract}
We propose a fully practical numerical scheme for the simulation of the stochastic total variation flow (STFV). The approximation is based on a stable time-implicit finite element space-time approximation of a regularized STVF equation. The approximation also involves a finite dimensional discretization of the noise that makes the scheme fully implementable on physical hardware. We show that the proposed numerical scheme converges to a solution that is defined in the sense of stochastic variational inequalities (SVIs). As a by product of our convergence analysis we provide a generalization of the concept of probabilistically weak solutions of stochastic partial differential equation (SPDEs) to the setting of SVIs. We also prove convergence of the numerical scheme to a probabilistically strong solution in probability if pathwise uniqueness holds. We perform numerical simulations to illustrate the behavior of the proposed numerical scheme {as well as its non-conforming variant} in the context of image denoising.
\end{abstract}

\maketitle

\section{Introduction}

We study a numerical approximation of the stochastic total variation flow (STVF) 
\begin{align}\label{TVF}
\dd X&= \mathrm{div} \left(\frac{\nabla X}{|\nabla X|}\right) \dd t -\lambda (X - g) \dd t + B(X)\dd W, &&\text{in } (0,T)\times \mathcal{O}, \nonumber\\
X & = 0 && \text{on } (0,T)\times \partial \mathcal{O}, \\
X(0)&=x^0 &&\text{in } \mathcal{O}, \nonumber
\end{align}
where $\mathcal{O} \subset \mathbb{R}^d$, $d\geq 1$ is a bounded {polyhedral} domain, $\lambda \geq 0$, $T>0$ are fixed constants and $x^0,\, g \in \mathbb{L}^2$ are given functions. 
We consider $W$ to be a cylindrical Wiener process on $\ell_2$ and a continuous mapping $B:\Bbb L^2\to\mathscr L_2(\ell_2;\Bbb L^2)$ where $\mathscr L_2$ stands for the space of Hilbert-Schmidt operators such that
\begin{enumerate}
\item[${(\mathbf B_1)}$] $\|B(h)\|_{\mathscr L_2(\ell_2;\Bbb L^2)}\le C(\|h\|+1)$ for every $h\in\Bbb L^2$,
\item[${(\mathbf B_2)}$] if $d\ge 2$, whenever $\{h_n\}$ is bounded in $\Bbb L^2$ and $h_n\to h$ a.e. in $\mathcal O$ then
$$
\|B(h_n)-B(h)\|_{\mathscr L_2(\ell_2;\Bbb L^2)}\to 0.
$$
\end{enumerate}
We also consider the weakly lower semicontinuous energy functional $\mathcal J:\Bbb L^2\to[0,\infty]$
\begin{align*}
\mathcal J(u)&:=\|\nabla u\|_{\operatorname{TV}(\mathcal{O})}+\int_{\partial\mathcal O}{|u|}\,\dd x+\frac{\lambda}{2}\into|u-g|^2\, \dd x & u&\in\Bbb L^2\cap BV(O)\,,
\\
\mathcal J(u)&:=\infty& u&\in\Bbb L^2\setminus BV(O)\,,
\end{align*}
see Lemma \ref{lem_lsc} for details.

Due to the singular character of total variation flow (\ref{TVF}), it is convenient to perform numerical simulations using a regularized problem
\begin{align}\label{reg.TVF}
 \dd \Xe&=  \mathrm{div} \left(\frac{\nabla \Xe}{\sqrt{|\nabla \Xe |^2+\eps^2}}\right)\dd t-\lambda(\Xe-g)\dd t + {B(X)}\,\dd W &&\text{in } (0,T)\times \mathcal{O}, \nonumber\\
 \Xe & = 0 && \text{on } (0,T)\times \partial \mathcal{O}, \\
 \Xe(0)&=x^0 &&\text{in } \mathcal{O}\,,  \nonumber
\end{align}
with a regularization parameter {$\eps>0$}. In the deterministic setting ($W\equiv 0$) the equation \eqref{reg.TVF} corresponds to the gradient flow of the regularized energy functional
$$
\mathcal J_\varepsilon(u):= \into \sqrt{|\nabla u|^2 +\eps^2}\, \dd x + \frac{\lambda}{2} \into |u-g|^2\, \dd x \qquad{u\in{\Bbb H^1_0}}.
$$

Convergent finite element approximation of the deterministic total variation flow (i.e., (\ref{TVF}) and (\ref{reg.TVF}) with $B(X) \equiv 0$) has been proposed
in \cite{pf03}.
In the stochastic setting, numerical approximation of probabilistically strong SVI solutions of (\ref{TVF}) with $B(X) \equiv X$ has been analyzed recently in \cite{num_stvf, brw21err, num_stvfhd} by considering the regularized
problem (\ref{reg.TVF}) withing the framework of stochastic variational inequalities, cf. \cite{br13}.
In the present work we propose a fully implementable numerical approximation of (\ref{TVF}) via the regularized problem 
(\ref{reg.TVF}): in addition to the discretization in space and time we also consider an implementable approximation of the noise term.
We show that, in the limit, the numerical solutions satisfy a stochastic variational inequality. As a consequence, we obtain an extension of the concept of stochastic variational inequalities of \cite{br13}.

{Let us compare the present work with \cite{num_stvf} where (probabilistically) strong solutions of \eqref{TVF} are constructed numerically in case the domain $\mathcal O$ is bounded, convex and with a piecewise $C^2$-smooth boundary, the equation is driven by a one-dimensional noise $W$, $B(X)=X$ and the interpolants $\overline{X}^\varepsilon_{\tau,h}$ of the numerical approximations converge to the unique solution $X$ with paths continuous in $L^2(\mathcal O)$ via the double limit
$$
\lim_{\varepsilon\to 0}\lim_{(\tau,h)\to(0,0)}\|\overline{X}^\varepsilon_{\tau,h}-X\|_{L^2(\Omega\times(0,T);L^2(\mathcal O))}=0.
$$
In the present work $\mathcal O$ is an open convex {polyhedral} domain, $B$ is a fairly general non-linearity (hence uniqueness is not expected to hold and we construct just (probabilistically weak) ``martingale'' solutions).
Furthermore, the considered noise is an infinite dimensional random walk generated by a sequence of random variables (suitable for computer simulations)
and  $\overline{X}^\varepsilon_{\tau,h}$ converge to $X$ in the joint limit as $(\varepsilon,\tau,h)\to(0,0,0)$. Our SVI solution concept is more general than the one in \cite{num_stvf} but
paths of the obtained solutions are only weakly continuous in $L^2(\mathcal O)$ and we cover the case $B(X)=X$ only in $d=1$. If, in addition, pathwise uniqueness holds then the approximations converge to a probabilistically strong solution in probability. 
We also note that the technique used for the construction of the probabilistically weak SVI solutions is straightforward, i.e., we avoid the use of martingale and Skorokhod representation theorems as in \cite{opw}.}

The paper is organized as follows. In Section~\ref{sec_num} we introduce the notation and the numerical approximation of (\ref{reg.TVF}) and  
in Section~\ref{sec_main} we state the main results of the paper {(which are proven in Sections~\ref{CoMaSo} and \ref{CoStSo})}.
In Section~\ref{sec_est} we show a priori estimate for the numerical solution.
In Section~\ref{SLCS} we present auxiliary results on compactness properties of locally convex spaces
which are used to deduce tightness properties and convergence of the numerical approximation in Section~\ref{sec_conv}.
Numerical experiments for the conforming and non-conforming finite element approximation schemes
are presented in Section~\ref{sec_numexp}. The proofs of auxiliary results are collected in the Appendix.

\section{Numerical approximation}\label{sec_num}

We denote the stanandard Lebesque and Sobolev functions spaces on $\mathcal{O}$ as
$\mathbb{L}^2:={L}^2(\mathcal{O})$, $\mathbb{L}^2_w=(\Bbb L^2,weak)$, $\mathbb{H}^1_0:={H}^1_0(\mathcal{O})$, $\mathbb{W}^{1,1} := {W}^{1,1}(\mathcal{O})$. The sets of rational and irrational numbers are denoted as $\Bbb Q$ and $\Bbb Q^\complement$, respectively. For time dependent random variables we often write $S_t(\cdot)$ instead of $S(\cdot, t)$ provided that it fits the context of presentation.


For $u \in BV(\mathcal{O})$, the gradient $\nabla u$ is a vector measure whose total variation satisfies
\begin{align}\label{total variation}
\|\nabla u\|_{\operatorname{TV}(\mathcal{O})}= \sup\left\{-\into u\, \mathrm{div}\, \mathbf{v} \mathrm{d}x;~ \mathbf{v} \in C^{\infty}_0(\mathcal{O},\mathbb{R}^d), ~\|\mathbf{v}\|_{\mathbb{L}^{\infty}}\leq 1\right\}
\end{align}
{and we define, as usual,
\begin{align*}
\|u\|_{BV(\mathcal O)}:=\|u\|_{\Bbb L^1}+\|\nabla u\|_{\operatorname{TV}(\mathcal{O})},\qquad u\in BV(\mathcal O).
\end{align*}
}

For $N\in\mathbb{N}$ we consider a discrete filtration $\mathcal{F}_\tau := \{\mathcal{F}^i_\tau \}_{i=0}^N$ on a probability space $(\Omega_\tau,\mathscr F_\tau,\Bbb P_\tau)$ 
and sequence $\{\xi^{i,j}_{\tau}\}_{i,j=1}^N$ of independent random variables 
such that 
\begin{itemize}
\item $\E{\,\xi^{i,j}_\tau}=0$,
\item $\E{\,|\xi^{i,j}_\tau|^2}=\tau$,
\item $\E{\,|\xi^{i,j}_\tau|^4}\le C\tau^2$,
\item $(\xi^{i,1}_{\tau},\dots,\xi^{i,N}_{\tau})$ is $\mathcal{F}^i_\tau$-measurable and independent of $\mathcal{F}^{i-1}_\tau$,
\end{itemize}
for every $i,j\in\{1,\dots,N\}$ and some fixed constant $C>0$ independent of $N\in\Bbb N$.
A simple and easily implementable construction of the noise that satisfies the above properties is, for instance, 
$\xi^{i,j}_\tau = \sqrt{\tau} \chi^{i,j}$ where $\{\chi^{i,j}\}_{i,j=1}^N$ are independent with $\mathbb{P}\,[\chi^{i,j} = \pm 1] = \frac{1}{2}$; as another choice, one can consider Brownian increments 
$\xi^{i,j}_\tau=\Delta_i\beta^{j} := \beta^{j}(t_i)-\beta^{j}(t_{i-1})$
of independent Brownian motions $\beta^{j}$.

Let $\Bbb V_h \subset \mathbb{H}^1_0$ be the standard finite element space of globally continuous functions 
which are piecewise linear over a quasi-uniform partition $\mathcal{T}_h$ of $\mathcal{O}$
and let $P_h:\mathbb{L}^2\rightarrow \Bbb V_h$ denote the $\mathbb{L}^2$-orthogonal projection on $\Bbb V_h$. 
We assume that the finite element space satisfies the following properties.
\begin{assumption}\label{ass_vh}
\begin{enumerate}
\item $\Bbb V_h$ is a finite-dimensional subspace of $\Bbb H^1_0$,
\item $\Bbb V_{h_2}\subseteq\Bbb V_{h_1}$ if $0<h_1<h_2$,
\item $\|P_hv\|_{\Bbb H^1_0}\le\kappa\|v\|_{\Bbb H^1_0}$ holds for every $v\in\Bbb H^1_0$ and $h>0$, for some $\kappa\in(0,\infty)$ (see \cite{bps02}),
\item $\bigcup_{h>0}\Bbb V_h$ is dense both in $\Bbb H^1_0$ and $\Bbb L^2$.
\end{enumerate}
\end{assumption}
It is well-know that the above assumption is satisfied for $\Bbb V_h$, $P_h$ see for instance \cite{book_fem}.
We note that the stability of the $\mathbb{L}^2$-projection, Assumption~\ref{ass_vh}$_{(3)}$
and the density of $\{\Bbb V_h\}_{h>0}$ in $\Bbb H^1_0$ implies that $\|\nabla v - \nabla P_{h}v\|\rightarrow 0$ as 
$h\rightarrow 0$ for every $v\in \mathbb{H}^1_0$.

We consider the following fully-discrete approximation of (\ref{reg.TVF}):
fix $N\in\mathbb{N}$, $h>0$ set $X^0 =P_hx^0$ and determine $X^i\in \mathbb{V}_h$, $i=1,\dots, N$ as the solution of
\begin{align}\label{num_tvf}
\ska{X^i-X^{i-1},\vh}  = & -\tau\left(\frac{\nabla X^i}{\sqrt{|\nabla X^i|^2+\varepsilon^2}},\nabla v_h\right) \\
&-\tau\lambda\ska{X^i - {g},\vh}+{\sum_{j=1}^N\ska{B_j(X^{i-1}),\vh}\xi^{i,j}_{\tau}} &&\forall \vh \in \mathbb{V}_h. \nonumber
\end{align}
Existence of the unique $\mathcal{F}_\tau$-adapted $\Bbb V_h$-valued solution $\{X^i\}_{i=0}^N$ can be proved analogically to \cite[Lemma 3]{num_stvf} therefore
we omit the proof. The process $X^i\equiv X^i_{\varepsilon,h}$, $i=0,\dots,N$ depends on the parameters $(\tau,h,\varepsilon)$, to simplify the notation we suppress this dependence unless it matters.

{\section{Summary of the main result{s}}\label{sec_main}

In this section, we summarize the main results of the paper.
We start with the definition of the SVI solution of \eqref{TVF}.
\begin{definition}\label{def_svi} Let $(\Omega,\mathcal F,(\mathcal F_t),\Bbb P)$ be a stochastic basis with independent $(\mathcal F_t)$-Wiener processes $\Wproc$. An adapted process $X{\in L^2([0,T]\times\Omega;\Bbb L^2)}$ with weakly continuous paths in $\Bbb L^2$ is called an SVI solution of \eqref{TVF} provided that
\begin{align}\label{SVI_def}
\frac{1}{2}\E{\|X(t)-I(t)\|^2}&+\E{\int_0^t\mathcal J(X(s))\,\dd s}\le\frac{1}{2}\|x^0-u^0\|^2
\\
&+\E{\int_0^t\mathcal J(I(s))\,\dd s}+\E{\int_0^t(G(s),X(s)-I(s))\,\dd s}\nonumber
\\
&+\frac{1}{2}\E{\int_0^t\|B(X(s))-H(s)\|_{\mathscr L_2(\ell_2,\Bbb L^2)}^2\,\dd s}\,,\nonumber
\end{align}
holds for every $t\in[0,T]$ and {for} every {test process}
\begin{equation}\label{test_process}
I(t)=u^0{-}\int_0^tG(s)\,\dd s+\sum_{j=1}^\infty\int_0^tH_j(s)\,\dd W^j,\qquad t\in[0,T]\,,
\end{equation}
that satisfies {$\Bbb P\,[I(t)\in\Bbb H^1_0]=1$ for almost every $t\in[0,T]$ and}
$$
\E{\int_0^T\|I(t)\|_{\Bbb H^1_0}\,\dd t}<\infty\,,
$$
for some $u^0\in\Bbb L^2$ and $(\mathcal F_t)$-progressively measurable processes $G$ and $H$ in $L^2([0,T]\times\Omega;\Bbb L^2)$ and $L^2([0,T]\times\Omega;\mathscr L_2(\ell_2,\Bbb L^2))$ respectively.
\end{definition}

{{
\begin{remark}
Inequality \eqref{SVI_def} implies
$$
\sup_{t\in{0,T}}\E{\|X(t)\|^2}+\E{\int_0^T\|X(t)\|_{BV(\mathcal O)}}\,\dd t<\infty.
$$
\end{remark}
}

\begin{remark}
The SVI solution in the sense of Definition \ref{def_svi} generalizes the definition of the SVI solution from \cite{br13} since the inequality \eqref{SVI_def} holds for a much larger class of test processes {introduced in \eqref{test_process}} than in \cite{br13}.
\end{remark}

We define the piecewise linear interpolant of the solution of the scheme (\ref{num_tvf}) as
\begin{equation}\label{xtaulin}
X_\tau(t) = \frac{t-t_{i-1}}{\tau}X^i + \frac{t_i-t}{\tau}X^{i-1} \quad \mathrm{for} \,\, t\in [t_{i-1}, t_i]\,,
\end{equation}
as well as the piecewise constant interpolants
\begin{subequations}\label{xtaubar}
\begin{align}
\overline{X}_\tau(t) & = X^{i} \quad \mathrm{for} \,\, t\in(t_{i-1}, t_i)\,,
\\
\underline{X}_\tau(t) & = X^{i-1} \quad \mathrm{for} \,\, t\in(t_{i-1}, t_i)\,,
\end{align}
\end{subequations}
where the dependence on $\varepsilon$, $h$ is not displayed.

\medskip

Let $\mathcal X^{(1)}$ denote the space of weakly c\`agl\`ad functions $f:[0,T]\to\Bbb L^2$ such that
$$
\int_0^T\|f(s)\|_{BV(\mathcal O)}\,ds<\infty,
$$
let $\mathcal X^{(2)}$ denote the space of weakly c\`adl\`ag functions $f:[0,T]\to\Bbb L^2$, define $\mathcal X^{(3)}$ as $C([0,T];\Bbb L^2_w)$ and equip the spaces $\mathcal X^{(1)}$, $\mathcal X^{(2)}$ and $\mathcal X^{(3)}$ with the topology of uniform convergence in $\Bbb L^2_w$.

\begin{thm}\label{main_thm} The random variables 
$$
(\overline{X}_{\eps,h,\tau},\underline{X}_{\eps,h,\tau},X_{\eps,h,\tau}):(\Omega_\tau,\mathcal F_\tau,\Bbb P_\tau)\to\mathcal X^{(1)}\times\mathcal X^{(2)}\times\mathcal X^{(3)}\,,
$$
are Borel measurable, their laws under $\Bbb P_\tau \equiv \mathbb{P}_{\eps, h, \tau}$ are tight with respect to $\eps$, $h$, $\tau$ and, moreover, every sequence $(\varepsilon_n,h_n,\tau_n)\to(0,0,0)$ has a subsequence $(\varepsilon_{n_k},h_{n_k},\tau_{n_k})$ such that laws of
$$
(\overline{X}_{\eps_{n_k},h_{n_k},\tau_{n_k}},\underline{X}_{\eps_{n_k},h_{n_k},\tau_{n_k}},X_{\eps_{n_k},h_{n_k},\tau_{n_k}})
$$
under $\Bbb P_{\tau_{n_k}}$ converge to a Radon probability measure $\nu$ on $\mathscr B(\mathcal X^{(1)}\times\mathcal X^{(2)}\times\mathcal X^{(3)})$
that satisfies
$$
\nu\big((x_1, x_2,x_3)\in \mathcal X^{(1)}\times\mathcal X^{(2)}\times\mathcal X^{(3)} :\,x_1=x_2=x_3\big)=1\,,
$$
and there exists a stochastic basis $(\Omega,\mathcal F,(\mathcal F_t),\Bbb P)$ with independent $(\mathcal F_t)$-Wiener processes $\Wproc$ and a weakly continuous $\Bbb L^2$-valued SVI solution $X$ of \eqref{TVF} in the sense of Definition \ref{def_svi} such that {$X(0)=x^0$,} $\nu$ is the law of $(X,X,X)$ on $\mathcal X^{(1)}\times\mathcal X^{(2)}\times\mathcal X^{(3)}$ {and
\begin{equation}\label{energ_eq}
\E{\sup_{t\in[0,T]}\|X(t)\|^4+\left(\int_0^T\|X(s)\|_{BV(\mathcal O)}\,\dd s\right)^2}\le C
\end{equation}
where $C$ depends only on $\|x_0\|$, $|\mathcal O|$, and $\|g\|$.
}
\end{thm}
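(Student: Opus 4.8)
The plan is to derive uniform a priori bounds for the scheme \eqref{num_tvf}, recast the scheme as a discrete stochastic variational inequality, prove tightness of the interpolants in the topology of uniform convergence in $\mathbb{L}^2_w$, and pass to the limit entirely through the Portmanteau theorem, so that neither Skorokhod's representation theorem nor a martingale representation theorem is needed. First I would test \eqref{num_tvf} with $v_h=X^i$ and use the identity $(a-b,a)=\tfrac12(\|a\|^2-\|b\|^2+\|a-b\|^2)$ together with the elementary bound $(\nabla X^i/\sqrt{|\nabla X^i|^2+\eps^2},\nabla X^i)\ge\|\nabla X^i\|_{\mathbb{L}^1}-\eps|\mathcal O|$ to extract the $BV$-type dissipation; summing over $i$, exploiting the discrete martingale structure and the moment assumptions on $\{\xi^{i,j}_\tau\}$ (the fourth-moment bound feeds a discrete Burkholder--Davis--Gundy inequality), the growth bound $(\mathbf B_1)$, and a discrete Gronwall argument, one obtains, uniformly in $(\eps,h,\tau)$,
$$
\mathbb{E}\left[\max_{0\le i\le N}\|X^i\|^4\right]+\mathbb{E}\left[\left(\tau\sum_{i=1}^N\|X^i\|_{BV(\mathcal O)}\right)^2\right]\le C(\|x^0\|,|\mathcal O|,\|g\|),
$$
together with a bound on the quadratic variation of the discrete stochastic integral (this is the content of Section~\ref{sec_est}). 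Testing \eqref{num_tvf} instead with $v_h=X^i-P_hI^i$ for a discretization $I^i$ of a test process as in \eqref{test_process}, and using convexity of $u\mapsto\into\sqrt{|\nabla u|^2+\eps^2}\,\dd x+\tfrac{\lambda}{2}\|u-g\|^2$ in the form $\mathcal J_\eps(X^i)-\mathcal J_\eps(I^i)\le(\nabla\mathcal J_\eps(X^i),X^i-I^i)$, turns the scheme into a discrete analogue of \eqref{SVI_def} for the interpolants, with $\mathcal J$ replaced by $\mathcal J_\eps$; note $\mathcal J_\eps\ge\mathcal J$ on $\mathbb{H}^1_0$ since the boundary term vanishes there.

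Borel measurability of the three interpolant-valued maps is immediate from the formulas \eqref{xtaulin}--\eqref{xtaubar} and the measurability of $\{X^i\}$. Using the a priori bounds, the fact that the regularized flux $\nabla X^i/\sqrt{|\nabla X^i|^2+\eps^2}$ is bounded by $1$ in $\mathbb{L}^\infty$ --- which makes $t\mapsto(X_\tau(t),\phi)$ equicontinuous for fixed $\phi\in\mathbb{H}^1_0$ up to the discrete martingale part, itself controlled in $L^2$ by the noise quadratic variation --- and the abstract compactness results of Section~\ref{SLCS}, I would establish joint tightness of the laws of $(\overline X_{\eps,h,\tau},\underline X_{\eps,h,\tau},X_{\eps,h,\tau})$ in $\mathcal X^{(1)}\times\mathcal X^{(2)}\times\mathcal X^{(3)}$, and in fact of the enlarged vector that also records a suitable interpolant $W_\tau=(W^k_\tau)_k$ of the noise in $C([0,T];U_0)$ for some Hilbert space $U_0\supset\ell_2$ and a strong-topology copy of $\overline X_{\eps,h,\tau}$ in $L^2((0,T)\times\mathcal O)$ (the last via an Aubin--Lions--Simon argument: $L^\infty_tL^2_x\cap L^1_tBV_x$ plus the weak time-regularity just mentioned). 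Prokhorov's theorem then gives, along any $(\eps_n,h_n,\tau_n)\to 0$, a subsequence whose laws converge to a Radon measure $\overline\nu$ projecting to $\nu$; that $\nu$ charges only $\{x_1=x_2=x_3\}$ (and that the strong copy coincides with them) follows from Portmanteau applied to the bounded continuous functional $x\mapsto\int_0^T\psi(|(x_1(s)-x_3(s),\phi)|)\,\dd s$, with $\psi$ bounded continuous vanishing only at $0$, together with the estimate $\mathbb{E}\int_0^T|(\overline X_\tau(s)-X_\tau(s),\phi)|^2\,\dd s\le C\tau$, then letting $\phi$ run over a countable dense subset of $\mathbb{H}^1_0$ and using left-continuity of $x_1$.

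On the canonical probability space carrying $\overline\nu$, with the augmented canonical filtration, $W$ is a cylindrical $(\mathcal F_t)$-Wiener process by the martingale (L\'evy) characterization, which passes to the limit from the zero-mean, variance-$\tau$, fourth-moment and adaptedness properties of $\{\xi^{i,j}_\tau\}$ and uses only L\'evy's theorem; the common limit $X$ of $x_1,x_2,x_3$ is $(\mathcal F_t)$-adapted because $\mathbb{E}[(W_\tau(t)-W_\tau(s))\Phi(\overline X_\tau|_{[0,s]},W_\tau|_{[0,s]})]=0$ for bounded continuous $\Phi$ passes to the limit. It then remains to pass to the limit in the discrete inequality above: the quadratic terms and $\int_0^t\mathcal J(\cdot)\,\dd s$ are lower semicontinuous for weak-uniform convergence (Lemma~\ref{lem_lsc} and Fatou), so Portmanteau for lower semicontinuous functionals yields the correct inequality direction, while $\tfrac12\mathbb{E}\int_0^t\|B(\underline X_\tau)-H_\tau\|_{\mathscr L_2(\ell_2,\mathbb{L}^2)}^2\,\dd s$ converges because, by $(\mathbf B_1)$--$(\mathbf B_2)$ and the generalized dominated convergence theorem, $u\mapsto\int_0^T\|B(u(s))-H(s)\|_{\mathscr L_2(\ell_2,\mathbb{L}^2)}^2\,\dd s$ is continuous on $L^2((0,T)\times\mathcal O)$, where strong convergence now holds; uniform integrability is supplied by the fourth-moment bound, and $\mathcal J_{\eps_n}\ge\mathcal J$ removes the regularization. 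Since $X_\tau(0)=P_{h_n}x^0\to x^0$ in $\mathbb{L}^2$ we get $X(0)=x^0$, and \eqref{energ_eq} follows from the a priori bound by another application of Portmanteau, as $x\mapsto\sup_t\|x(t)\|^4$ and $x\mapsto\left(\int_0^T\|x(s)\|_{BV(\mathcal O)}\,\dd s\right)^2$ are weak-uniformly lower semicontinuous.

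The main obstacle is the compactness step. Because the singular total variation operator provides no spatial smoothing, the solution values are controlled only in $\mathbb{L}^2_w$, yet the nonlinearity $B$ must be passed to the limit; this forces one to extract, simultaneously, strong $L^2((0,T)\times\mathcal O)$ convergence from the $BV$-dissipation and the very weak time-regularity of the scheme, and to do so for random sequences using only Portmanteau rather than Skorokhod. Reconciling the weak-uniform topologies on the interpolants with the strong topology required by $B$, and verifying the abstract compactness criteria in the non-metrizable path spaces $\mathcal X^{(i)}$, is where the real work lies; the case $B(X)=X$ in dimension one must be treated separately, since $(\mathbf B_2)$ is not assumed there and one instead exploits the compact embedding of $BV(\mathcal O)$ valid in one space dimension.
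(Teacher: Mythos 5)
Your overall strategy coincides with the paper's: uniform a priori bounds via testing with $X^i$ and a discrete Burkholder--Davis--Gundy/Gronwall argument, a discrete SVI obtained by testing with $X^i-P_hI^i$, tightness of the interpolants in the weak-uniform path spaces together with a Donsker-type interpolant of the noise, Prokhorov on the completely regular product space, and passage to the limit entirely through a reinforced Portmanteau theorem (Lemma~\ref{cont_meas} / Proposition~\ref{portmanteau}) rather than Skorokhod representation, with $\mathcal J$ handled by lower semicontinuity, $\mathcal J_\eps\ge\mathcal J$, and the $B$-term by strong convergence plus $(\mathbf B_1)$--$(\mathbf B_2)$. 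Two remarks are in order.

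First, the one step that would fail as written is the claim that an Aubin--Lions--Simon argument from $L^\infty_t\Bbb L^2\cap L^1_tBV$ plus the fractional time regularity yields tightness of a \emph{strong-topology copy of $\overline X_{\eps,h,\tau}$ in $L^2((0,T)\times\mathcal O)$}. For $d\ge2$ the embedding $BV(\mathcal O)\hookrightarrow\Bbb L^2$ is not compact (and for $d\ge3$ not even continuous), so the available strong compactness is only in $L^q(0,T;L^r(\mathcal O))$ for $r<\frac{d}{d-1}\le2$; this is exactly why the paper's final remark stops at such exponents and why the proof of Theorem~\ref{main_full_1} never adds a strong copy to the tightness vector. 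Instead it derives, \emph{deterministically}, from convergence in $Q_{c,BV}([0,T];\Bbb L^2_w)$ and the uniform bound on $\int_0^T\|f_k\|_{BV}\,\dd s$, that $\int_0^T\|f_k-f\|_{\Bbb L^1}\,\dd s\to0$ (via $BV\hookrightarrow\hookrightarrow\Bbb L^1\hookrightarrow\Bbb H^{-d}$), extracts an a.e.\ convergent subsequence, and then invokes $(\mathbf B_2)$ together with the linear growth and the fourth-moment bound. Your own invocation of $(\mathbf B_2)$ shows you have the right ingredient; you should route the argument through a.e.\ convergence in $\mathcal O$ rather than through strong $L^2((0,T)\times\mathcal O)$ convergence, reserving the latter for $d=1$ where $BV\hookrightarrow\hookrightarrow\Bbb L^2$.

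Second, a smaller point: Borel measurability of the interpolants is not ``immediate from the formulas,'' because the path spaces $\mathcal X^{(i)}$ carry the non-metrizable topology of uniform convergence in $\Bbb L^2_w$; the paper needs the $\sigma$-compactness of the range together with metrizability of compacts (Corollary~\ref{cor_2_me}, Remark~\ref{rem_qps}) to upgrade pointwise ($\mathcal Y_T$-) measurability to Borel measurability. You acknowledge the non-metrizability issue at the end, but this is where it bites, not only in the compactness criteria. With these two corrections your argument matches the paper's proof.
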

\begin{proof}
See Corollary \ref{main_cor} and Theorem \ref{main_full_1} for the proof.
\end{proof}

\begin{remark}
Compared to the (probabilistically strong) SVI solutions in \cite{br13}, \cite{num_stvf} where the stochastic basis is given,
the SVI solution obtained in this paper is probabilistically weak
in the sense that $(\Omega,\mathcal F,(\mathcal F_t),\Bbb P) \equiv (\mathbf Z,\mathscr B(\mathbf Z),(\mathcal Z^\mu_t),\mu)$ is constructed as a part of the solution, {cf. Corollary~\ref{main_cor} and Theorem~\ref{main_full_1}}.
\end{remark}

{
\begin{remark}
If uniqueness in law holds for the SVI solution of \eqref{TVF}, cf. \cite{br13}, then the laws of
$$
(\overline{X}_{\eps,h,\tau},\underline{X}_{\eps,h,\tau},X_{\eps,h,\tau})
$$
under $\Bbb P_{\tau}$ converge to $\nu$ on $\mathscr B(\mathcal X^{(1)}\times\mathcal X^{(2)}\times\mathcal X^{(3)})$ as $(\varepsilon,h,\tau)\to(0,0,0)$, in particular, there is no need to pass to a subsequence in Theorem \ref{main_thm}.
\end{remark}

In case we work on a single stochastic basis with a given Wiener process and pathwise uniqueness holds for \eqref{TVF} then we can construct probabilistically strong solutions.

\begin{thm}\label{main_thm_2}
Let $(W^j)_{j\in\mathbb{N}}$ be independent $(\mathcal F_t)$-Wiener processes on $(\Omega,\mathcal F,(\mathcal F_t),\Bbb P)$ and let
$$
\xi_\tau^{i,j}=W^j(t_i)-W^j(t_{i-1}),\qquad t_i=i\tau.
$$
Furthremore, assume that pathwise uniqueness holds for the SVI solutions of \eqref{TVF} satisfying \eqref{energ_eq}. Then there exists an SVI solution $X$ {with respect to $(W^j)_{j\in\mathbb{N}}$} satisfying \eqref{energ_eq} such that
$$
\sup_{t\in[0,T]}|(\overline{X}_{\varepsilon,h,\tau}(t)-X(t),\varphi)|,\qquad\sup_{t\in[0,T]}|(\underline{X}_{\varepsilon,h,\tau}(t)-X(t),\varphi)|,\quad\sup_{t\in[0,T]}|(X_{\varepsilon,h,\tau}(t)-X(t),\varphi)|
$$
converge to $0$ in probability as $(\varepsilon,h,\tau)\to(0,0,0)$ for every $\varphi\in\Bbb L^2$.
\end{thm}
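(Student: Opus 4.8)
The plan is to combine the tightness from Theorem~\ref{main_thm} with the assumed pathwise uniqueness through a Gy\"ongy--Krylov type argument carried out on a ``doubled'' state space that also retains the driving noise. Two facts are used repeatedly. First, since $\xi^{i,j}_\tau=W^j(t_i)-W^j(t_{i-1})$, the discrete solution $\{X^i\}$ and hence each interpolant $X_{\varepsilon,h,\tau}$, $\overline X_{\varepsilon,h,\tau}$, $\underline X_{\varepsilon,h,\tau}$ is an $(\mathcal F_t)$-adapted measurable functional of $(W^j)_{j\in\mathbb N}$. Second, the a priori estimates of Section~\ref{sec_est} give $\sup_{\varepsilon,h,\tau}\E{\sup_{t\in[0,T]}\|X_{\varepsilon,h,\tau}(t)\|^4}\le C$, and likewise for the other two interpolants. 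Fix a Polish path space $\mathscr C$ carrying $W=(W^j)_{j\in\mathbb N}$ (e.g. $C([0,T];U_0)$ for a Hilbert space $U_0$ into which $\ell_2$ embeds by a Hilbert--Schmidt map). Given two parameter sequences $(\varepsilon_n,h_n,\tau_n)\to(0,0,0)$ and $(\varepsilon_n',h_n',\tau_n')\to(0,0,0)$, set
$$
Y_n:=\big(\overline X_{\varepsilon_n,h_n,\tau_n},\underline X_{\varepsilon_n,h_n,\tau_n},X_{\varepsilon_n,h_n,\tau_n},\,\overline X_{\varepsilon_n',h_n',\tau_n'},\underline X_{\varepsilon_n',h_n',\tau_n'},X_{\varepsilon_n',h_n',\tau_n'},\,W\big)
$$
with values in $\mathcal Y:=(\mathcal X^{(1)}\times\mathcal X^{(2)}\times\mathcal X^{(3)})^2\times\mathscr C$. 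By Theorem~\ref{main_thm} the laws of the two triples are tight and the law of $W$ is fixed, hence $\{\mathrm{law}(Y_n)\}$ is tight on $\mathcal Y$ and, along a subsequence, converges to a Radon probability measure $\mu$.

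Next, equip $(\mathcal Y,\mathscr B(\mathcal Y),\mu)$ with the augmentation of the filtration generated by the seven coordinate processes restricted to $[0,t]$, and rerun the identification step of the proof of Theorem~\ref{main_thm} on this space; the additional solution triple and the retained noise cause no interference. One obtains that the last coordinate is an $\ell_2$-cylindrical Wiener process for this filtration, that the two limiting solution coordinates $X^{(1)}$ and $X^{(2)}$ (each equal $\mu$-a.s. to the three copies inside its triple) have weakly continuous $\mathbb L^2$-paths, start from $x^0$, satisfy \eqref{energ_eq} by the uniform estimates and weak lower semicontinuity, and are both SVI solutions of \eqref{TVF} in the sense of Definition~\ref{def_svi} \emph{with respect to the same Wiener process on the same stochastic basis}; here the decisive point is that the discrete stochastic increments $\sum_j B_j(X^{i-1})\xi^{i,j}_\tau$ pass in the joint limit to the It\^o integrals against the retained $W$. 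The assumed pathwise uniqueness among SVI solutions satisfying \eqref{energ_eq} then forces $X^{(1)}=X^{(2)}$ $\mu$-a.s., so $\mu$ is concentrated on $\{x_1=\cdots=x_6\}$.

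To conclude, let $\{\varphi_m\}\subset\mathbb L^2$ be countable and separating. By Markov's inequality and the uniform fourth-moment bound, for every $\delta>0$ there is $R$ with $\sup_{\varepsilon,h,\tau}\Bbb P[\sup_t\|X_{\varepsilon,h,\tau}(t)\|>R]<\delta$, and on $B_R:=\{f:\sup_t\|f(t)\|\le R\}$ the uniform weak-$\mathbb L^2$ topology is induced by a complete separable metric. Since, by the previous paragraph, every pair of subsequences of $\{X_{\varepsilon_n,h_n,\tau_n}\}$ admits a further subsequence whose joint law converges to a measure supported on the diagonal, the Gy\"ongy--Krylov lemma makes $\{X_{\varepsilon_n,h_n,\tau_n}\}$ Cauchy in probability, hence convergent in probability in $\mathcal X^{(3)}$; as $(\varepsilon_n,h_n,\tau_n)\to(0,0,0)$ was arbitrary, $X_{\varepsilon,h,\tau}\to X$ in probability in $\mathcal X^{(3)}$ as $(\varepsilon,h,\tau)\to(0,0,0)$, and since $\mu$ lives on $\{x_1=\cdots=x_6\}$ the same limit $X$ is obtained for $\overline X_{\varepsilon,h,\tau}$ and $\underline X_{\varepsilon,h,\tau}$. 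Passing to an a.s.\ convergent subsequence and taking the limit in the scheme \eqref{num_tvf} and in the discrete energy inequality directly on the \emph{original} basis --- exactly as in Theorem~\ref{main_thm}, now without any change of probability space --- shows that $X$ is an SVI solution of \eqref{TVF} with respect to $(W^j)_{j\in\mathbb N}$ satisfying \eqref{energ_eq}. Finally, convergence in $\mathcal X^{(3)}$ is precisely the asserted convergence of $\sup_{t\in[0,T]}|(X_{\varepsilon,h,\tau}(t)-X(t),\varphi)|$ to $0$ in probability for every $\varphi\in\mathbb L^2$, and similarly for the other two interpolants.

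I expect the identification of the doubled limit to be the principal difficulty: one must verify that introducing a second solution copy and retaining the noise in the limiting object does not destroy the Wiener-process property of the limiting noise relative to the enlarged filtration, and that both solution copies are adapted to one common filtration --- this is exactly what licenses the use of pathwise uniqueness. The remaining ingredients are a routine repackaging of the tightness of Theorem~\ref{main_thm}, the a priori estimates, and the Gy\"ongy--Krylov lemma, the only further technicality being the reduction to the balls $B_R$ forced by the fact that $\mathcal X^{(3)}$ is not itself a Polish space.
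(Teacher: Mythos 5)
Your proposal follows essentially the same route as the paper's proof of Theorem~\ref{strong_exist}: a Gy\"ongy--Krylov argument on a doubled path space that retains the driving noise, identification of both limit coordinates as SVI solutions with respect to the common Wiener processes, pathwise uniqueness to concentrate the limit law on the diagonal, and a final identification of the limit as an SVI solution on the original basis. The one difficulty you flag but do not resolve --- adaptedness of the two solution copies and of the test processes to a common filtration --- is handled in the paper by a backward time shift in the simple test processes (using $V^{\alpha-1}$ rather than $V^{\alpha}$ on $(s_\alpha,s_{\alpha+1}]$), which compensates for the slight anticipation of the piecewise constant interpolants and makes $G(t,Z_k)$ and $H_j(t,Z_k)$ genuinely $(\mathcal F_t)$-adapted once $\tau$ is smaller than the mesh of $\{s_\alpha\}$.
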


\begin{proof}
See Theorem \ref{strong_exist}.
\end{proof}

{
\begin{remark}
Theorem \ref{main_thm} and Theorem \ref{main_thm_2} {can be strengthened} considerably {by Lemma \ref{cont_meas}}. 
Assume that
$$
K:\mathcal X^{(1)}\times\mathcal X^{(1)}\times\mathcal X^{(2)}\times\mathcal X^{(3)}\to[0,\infty]
$$
satisfies the following property:
\begin{equation}\label{lsc_0}
K(f^0,f^1,g,h)\le\liminf_{n\to\infty}K(f^0_n,f^1_n,g_n,h_n)
\end{equation}
for any sequence $(f^0_k,f^1_k,g_k,h_k)\in\mathcal X^{(1)}\times\mathcal X^{(1)}\times\mathcal X^{(2)}\times\mathcal X^{(3)}$ converging in $\mathcal X^{(1)}\times\mathcal X^{(1)}\times\mathcal X^{(2)}\times\mathcal X^{(3)}$ to $(f^0,f^1,g,h)$ where in addition 
$$
\sup_k\int_0^T[\|f^0_k(s)\|_{BV(\mathcal O)}+\|f^1_k(s)\|_{BV(\mathcal O)}]\,\dd s<\infty\,.
$$
Then the variables $(\overline{X}_{\eps_{n_k},h_{n_k},\tau_{n_k}},\underline{X}_{\eps_{n_k},h_{n_k},\tau_{n_k}},X_{\eps_{n_k},h_{n_k},\tau_{n_k}})$
from Theorem \ref{main_thm} satisfy
\begin{equation}\label{lsc_1}
\Bbb E\left[K(X,X,X,X)\right]\le\liminf_{k\to\infty}\Bbb E_{\tau_k}\left[K(\overline{X}_{\eps_{n_k},h_{n_k},\tau_{n_k}},\overline{X}_{\eps_{n_k},h_{n_k},\tau_{n_k}},\underline{X}_{\eps_{n_k},h_{n_k},\tau_{n_k}},X_{\eps_{n_k},h_{n_k},\tau_{n_k}})\right]
\end{equation}
and the random variables $(X,\overline{X}_{\eps,h,\tau},\underline{X}_{\eps,h,\tau},X_{\eps,h,\tau})$ from Theorem \ref{main_thm_2} satisfy
\begin{equation}\label{lsc_2}
\Bbb E\left[K(X,X,X,X)\right]\le\liminf_{(\varepsilon,h,\tau)\to(0,0,0)}\Bbb E\left[K(X,\overline{X}_{\eps,h,\tau},\underline{X}_{\eps,h,\tau},X_{\eps,h,\tau})\right].
\end{equation}
Obviously, if $K$ is real bounded and \eqref{lsc_0} holds also for $-K$ then we get equalities and limits in \eqref{lsc_1} and \eqref{lsc_2}.
In particular, under the assumptions of Theorem \ref{main_thm_2},
$$
\|\overline{X}_{\eps,h,\tau}-X\|_{L^q((0,T);L^r(\mathcal O))}\to 0
$$
in probability as $(\varepsilon,h,\tau)\to(0,0,0)$ for every $r\in[1,\frac{d}{d-1})$ and every $q\in[1,\infty)$ such that $q(r-2)<r$.
\end{remark}

}
}

\section{A priori estimates}\label{sec_est}

The numerical approximation (\ref{num_tvf}) satisfies a discrete energy estimate.

\begin{lemma}\label{lem_energy}
Let $x^0, g\in\mathbb{L}^2$ and $T> 0$. Then there exists a constant $C>0$ depending only on {$T$ and on the constants in $(\mathbf B_1)$ and in Section \ref{sec_num}} such that the solutions of scheme \eqref{num_tvf} satisfy for any $\eps,h\in (0,1]$, $N\in \mathbb{N}$
\begin{align}\label{disc_supener}
\Bbb E\,\Big[\frac{1}{2}\sup_{i=1,\ldots,N}\nos{X^i}+\sum_{i=1}^N\Big(\frac{1}{4}\nos{X^i-X^{i-1}}&+\tau\mathcal J_\varepsilon(X^i)\Big)\Big]^2
\\
\nonumber&\leq{C}\left({\frac{1}{2}}+\frac{1}{2}\nos{x^0}+\be{\mathcal{O}}+\frac{\lambda}{2}\nos{g}\right)^2.
\end{align}
\end{lemma}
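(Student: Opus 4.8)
The plan is to test the scheme \eqref{num_tvf} with $v_h = \tau X^i$, sum over $i$, and combine the resulting discrete identity with a discrete Burkholder–Davis–Gundy argument. First I would write the elementary identity $(X^i - X^{i-1}, X^i) = \tfrac12\|X^i\|^2 - \tfrac12\|X^{i-1}\|^2 + \tfrac12\|X^i - X^{i-1}\|^2$ for the left-hand side. For the first term on the right I would use the convexity inequality $\big(\fe{X^i}, \nabla X^i\big) \ge \mathcal J_\varepsilon^{\mathrm{TV}}(X^i) - \mathcal J_\varepsilon^{\mathrm{TV}}(0) = \into \sqrt{|\nabla X^i|^2+\eps^2}\,\dd x - \eps|\mathcal O|$ (since $t\mapsto\sqrt{t+\eps^2}$ composed appropriately is convex, or directly since $\xi\mapsto\sqrt{|\xi|^2+\eps^2}$ is convex with subgradient $\xi/\sqrt{|\xi|^2+\eps^2}$), together with $\lambda(X^i - g, X^i) \ge \tfrac{\lambda}{2}\|X^i\|^2 - \tfrac{\lambda}{2}\|g\|^2$, so that $\tau\lambda(X^i-g,X^i) + \tau\big(\fe{X^i},\nabla X^i\big) \ge \tau\mathcal J_\varepsilon(X^i) - \tau\eps|\mathcal O| - \tfrac{\tau\lambda}{2}\|g\|^2$. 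After summation this yields, for every $m\le N$,
\begin{align*}
\tfrac12\|X^m\|^2 + \sum_{i=1}^m\Big(\tfrac12\|X^i-X^{i-1}\|^2 + \tau\mathcal J_\varepsilon(X^i)\Big) \le \tfrac12\|x^0\|^2 + T\eps|\mathcal O| + \tfrac{T\lambda}{2}\|g\|^2 + M_m + R_m,
\end{align*}
where $M_m = \sum_{i=1}^m\sum_{j=1}^N (B_j(X^{i-1}), X^{i-1})\,\xi^{i,j}_\tau$ is a discrete martingale (using that $\xi^{i,j}_\tau$ has mean zero and is independent of $\mathcal F^{i-1}_\tau$, against which $X^{i-1}$ and $B_j(X^{i-1})$ are measurable), and $R_m = \sum_{i=1}^m\sum_{j=1}^N (B_j(X^{i-1}), X^i - X^{i-1})\,\xi^{i,j}_\tau$ is the cross term coming from replacing $X^i$ by $X^{i-1}$ in the stochastic term.

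Next I would handle the two stochastic contributions. For $R_m$, use the scheme to express $X^i - X^{i-1}$ and bound crudely, or more simply apply Young's inequality: $|(B_j(X^{i-1}), X^i-X^{i-1})\,\xi^{i,j}_\tau| \le \tfrac14\|X^i-X^{i-1}\|^2 \cdot(\text{something})$ — more carefully, one writes $\sum_j (B_j(X^{i-1}),X^i-X^{i-1})\xi^{i,j}_\tau \le \tfrac18\|X^i-X^{i-1}\|^2 + 2\|\sum_j B_j(X^{i-1})\xi^{i,j}_\tau\|^2$ and absorbs the first term into the left-hand side (this is why the coefficient of $\|X^i-X^{i-1}\|^2$ degrades from $\tfrac12$ to $\tfrac14$), while the second term, after taking expectation, contributes $\sum_i 2\tau\,\Bbb E\|B(X^{i-1})\|_{\mathscr L_2}^2 \le C\tau\sum_i(\Bbb E\|X^{i-1}\|^2 + 1)$ using the independence and variance properties of $\xi^{i,j}_\tau$ and $(\mathbf B_1)$; the fourth-moment bound on $\xi^{i,j}_\tau$ is what one needs when squaring. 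For the martingale $M_m$, since the Lemma asserts a bound on the square of the supremum over $i$, I would take the supremum over $m$, then square, then take expectation, and apply a discrete Burkholder–Davis–Gundy inequality: $\Bbb E\big[\sup_m M_m^2\big] \le C\,\Bbb E\big[\sum_{i=1}^N\big(\sum_j (B_j(X^{i-1}),X^{i-1})\xi^{i,j}_\tau\big)^2\big] \le C\tau\sum_i \Bbb E\big[\|X^{i-1}\|^2\|B(X^{i-1})\|_{\mathscr L_2}^2\big] \le \tfrac{1}{64}\Bbb E\big[\sup_i\|X^i\|^4\big] + C\tau\sum_i\Bbb E\|X^{i-1}\|^2 + C$, using $(\mathbf B_1)$ and Young's inequality again to split off a small multiple of $\sup_i\|X^i\|^4$ which is absorbed on the left.

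Collecting these, squaring the pathwise inequality (using $(a+b+c+\dots)^2 \le C\sum(\cdot)^2$), taking the supremum over $m$ and then expectations, one arrives at
\begin{align*}
\Bbb E\Big[\tfrac12\sup_{i\le N}\|X^i\|^2 + \sum_{i=1}^N\big(\tfrac14\|X^i-X^{i-1}\|^2 + \tau\mathcal J_\varepsilon(X^i)\big)\Big]^2 \le C\Big(\tfrac12 + \tfrac12\|x^0\|^2 + |\mathcal O| + \tfrac\lambda2\|g\|^2\Big)^2 + C\tau\sum_{i=1}^N\Bbb E\Big[\sup_{k\le i}\|X^k\|^2\Big]^{?},
\end{align*}
and the residual term is controlled by the discrete Gronwall lemma (noting $N\tau = T$), yielding the stated estimate with $C$ depending only on $T$ and the structural constants. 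The main obstacle is the bookkeeping of the various Young's-inequality splittings so that all the absorbed terms — the $\tfrac14\|X^i-X^{i-1}\|^2$ reduction and the small multiple of $\Bbb E\sup_i\|X^i\|^4$ — fit strictly inside the coefficients available on the left-hand side; the $\eps\le 1$ hypothesis is used precisely so that $\mathcal J_\varepsilon(0) = \eps|\mathcal O| \le |\mathcal O|$, keeping the constant independent of $\eps$. One should also verify that $\mathcal J_\varepsilon(X^i) \ge 0$ so that the partial sums up to $m$ remain below the full sum, legitimizing the step where the supremum over $m$ is taken before invoking Gronwall.
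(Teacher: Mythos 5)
Your proposal is correct and follows essentially the same route as the paper: testing with $v_h=X^i$, using convexity of $\mathcal J_\varepsilon$ for the drift, splitting the noise term into a martingale difference in $X^{i-1}$ plus a quadratic remainder absorbed into $\frac14\|X^i-X^{i-1}\|^2$, and then combining the discrete Burkholder--Davis--Gundy inequality (for the squared supremum of the martingale, using the fourth-moment bound on $\xi^{i,j}_\tau$) with the discrete Gronwall lemma. The paper merely packages the bookkeeping differently (via the quantities $a^i,b,c^i,d^i$ and an induction to secure square-integrability before absorbing terms, the latter being a point you should make explicit since absorbing a small multiple of $\mathbb E\sup_i\|X^i\|^4$ presupposes its finiteness).
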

\begin{proof}
Analogically to \cite[Lemma~4.9]{num_stvf}, set $\vh=X^i$ in (\ref{num_tvf}) and obtain
\begin{align*}
\frac{1}{2}\nos{X^i}-\frac{1}{2}\nos{X^{i-1}} &+\frac{1}{4}\nos{X^i-X^{i-1}}+\tau\mathcal J_\varepsilon(X^i)
\\
\leq&\tau\mathcal J_\varepsilon(0)+\sum_{j=1}^N(B_j(X^{i-1}),X^{i-1})\xi^{i,j}_\tau+\left\|\sum_{j=1}^N B_j(X^{i-1})\xi^{i,j}_\tau\right\|^2\nonumber
\end{align*}
for $1\le i\le N$.
{If we define
\begin{align*}
a^i&=\frac 12+\frac{1}{2}\|X^i\|^2+\sum_{j=1}^i\left(\frac{1}{4}\|X^j-X^{j-1}\|^2+\tau\mathcal J_\varepsilon(X^j)\right)
\\
b&=\tau\mathcal J_\varepsilon(0),\quad c^i=\sum_{j=1}^N(B_j(X^{i-1}),X^{i-1})\xi^{i,j}_\tau,\quad d^i=\left\|\sum_{j=1}^N B_j(X^{i-1})\xi^{i,j}_\tau\right\|^2\,,
\end{align*}
then
$$
a^i-a^{i-1}\le b+c^i+d^i,\qquad i=1,\dots,N.
$$
If $a^{i-1}\in L^2(\Omega)$ then $X^{i-1}\in L^4(\Omega;\Bbb L^2)$, $c^i\in L^2(\Omega)$, $d^i\in L^2(\Omega)$ and so, 
by induction, $a^i\in L^2(\Omega)$ for every $i=0,\dots,N$. Next, observe that $c^i$ is a square integrable martingale difference and that
$$
\E{(c^i)^2}\le C_B \tau\E{(a^{i-1})^2},\qquad\E{(d^i)^2}\le C_B \tau^2\E{(a^{i-1})^2}
$$
where $C_B$ depends only on the growth constants in $(\mathbf B_1)$ and in the assumption $\E{(\xi^{i,j}_\tau)^2}\le C\tau^2$. Let us define
$$
a_*^i=\max_{j=0,\dots,i}a^j,\qquad i=0,\dots,N.
$$
Then
$$
a_*^i\le(a^0+Nb)+{\max_{j=1,\dots,i}}\left|\sum_{\ell=1}^j c^\ell\right|+\sum_{j=1}^id^j,\qquad i=1,\dots,N
$$
and
$$
(a_*^i)^2\le 3(a^0+Nb)^2+3{\max_{j=1,\dots,i}}\left|\sum_{\ell=1}^jc^\ell\right|^2+3N\sum_{j=1}^i(d^j)^2,\qquad i=1,\dots,N.
$$
Hence, by the discrete Burkholder-Davis-Gundy inequality, we obtain
\begin{align*}
\E{(a_*^i)^2}&\le 3(a^0+Nb)^2+3C_2\sum_{j=1}^i\E{(c^j)^2}+3N\sum_{j=1}^i\E{(d^j)^2}
\\
&\le 3(a^0+Nb)^2+3C_2\mathbf c\tau\sum_{j=1}^i\E{(a^{j-1})^2}+3\mathbf c\tau^2N\sum_{j=1}^i\E{(a^{j-1})^2}
\\
&\le 3(a^0+Nb)^2+\frac{K_{C,T}}{N}\sum_{j=1}^i\E{(a_*^{j-1})^2},\qquad i=1,\dots,N
\end{align*}
and we get the result by the discrete Gronwall lemma.

}
\end{proof}

Next, we estimate the discrete time increments of the numerical solution.
\begin{lemma}\label{lem_incr}
For any $0\le n\le \ell+n\le N$ it holds that
$$
\mathbb{E} [ \|X^{n+\ell}-X^{n}\|_{\Bbb H^{-1}}^4 ]\leq C t_\ell^2\,,
$$
{where $C$ does not depend on $\eps$, $h$, $\tau$.}
\end{lemma}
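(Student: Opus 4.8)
The plan is to sum the scheme \eqref{num_tvf} from step $n+1$ to step $n+\ell$, test against an arbitrary $v_h\in\Bbb V_h$, and estimate the $\Bbb H^{-1}$-norm by duality. Writing $D:=X^{n+\ell}-X^n$, for every $v_h\in\Bbb V_h$ we have
\begin{align*}
(D,v_h)=-\tau\sum_{i=n+1}^{n+\ell}\left(\fe{X^i},\nabla v_h\right)-\tau\lambda\sum_{i=n+1}^{n+\ell}(X^i-g,v_h)+\sum_{i=n+1}^{n+\ell}\sum_{j=1}^N(B_j(X^{i-1}),v_h)\,\xi^{i,j}_\tau\,.
\end{align*}
To convert this into an $\Bbb H^{-1}$ bound, the natural test function is $v_h=P_h w$ for $w\in\Bbb H^1_0$, and one uses $\|\nabla P_h w\|\le C\|\nabla w\|$ (stability, Assumption~\ref{ass_vh}$_{(3)}$) together with $\|P_h w\|\le\|w\|$; since $P_h$ is the $\Bbb L^2$-projection and $X^i, B_j(X^{i-1})\in\Bbb V_h$, each inner product $(X^i,P_h w)=(X^i,w)$ etc., so one genuinely controls the $\Bbb H^{-1}$-norm of $D$ (recall $\|D\|_{\Bbb H^{-1}}=\sup\{(D,w):w\in\Bbb H^1_0,\ \|w\|_{\Bbb H^1_0}\le 1\}$ since $D\in\Bbb L^2$).

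The three contributions are then handled separately. For the total-variation term, $|(\fe{X^i},\nabla P_h w)|\le\|\nabla P_h w\|\le C\|w\|_{\Bbb H^1_0}$ because the integrand has modulus $\le 1$ pointwise; summing gives a bound $C\tau\ell=Ct_\ell$, hence a contribution $\le Ct_\ell^4$ after raising to the fourth power — and this term is even deterministic. For the zeroth-order term, $\tau\lambda\sum_{i=n+1}^{n+\ell}|(X^i-g,w)|\le C\tau\sum_{i=n+1}^{n+\ell}(\|X^i\|+\|g\|)\|w\|$, so its $\Bbb H^{-1}$ contribution is $\le C\tau\sum_{i=n+1}^{n+\ell}(\|X^i\|+\|g\|)$; raising to the fourth power, using $(\sum_{i=1}^\ell y_i)^4\le \ell^3\sum y_i^4$ (discrete Hölder) and the supremum estimate from Lemma~\ref{lem_energy} (which gives $\E{\sup_i\|X^i\|^4}\le C$), yields $\le C(\tau\ell)^4(1+\|g\|^4)=Ct_\ell^4\le Ct_\ell^2$ (using $t_\ell\le T$). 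The stochastic term is the one requiring care: $M:=\sum_{i=n+1}^{n+\ell}\sum_{j=1}^N(B_j(X^{i-1}),w)\xi^{i,j}_\tau$ is a martingale transform in $i$ (the inner sum over $j$ at fixed $i$ is $\mathcal F^i_\tau$-measurable with zero conditional mean given $\mathcal F^{i-1}_\tau$, since $X^{i-1}$ and $w$ are $\mathcal F^{i-1}_\tau$-measurable). Apply the discrete Burkholder–Davis–Gundy inequality in the form $\E{\max_{n<m\le n+\ell}|M_m|^4}\le C\,\E{\big(\sum_{i=n+1}^{n+\ell}(\sum_{j=1}^N(B_j(X^{i-1}),w)\xi^{i,j}_\tau)^2\big)^2}$; bounding $\E{(\sum_j(B_j(X^{i-1}),w)\xi^{i,j}_\tau)^2\mid\mathcal F^{i-1}_\tau}\le\tau\|B(X^{i-1})\|_{\mathscr L_2(\ell_2;\Bbb L^2)}^2\|w\|^2\le C\tau(\|X^{i-1}\|^2+1)\|w\|^2$ by $(\mathbf B_1)$, and applying discrete Hölder again in the outer square, the quadratic variation is $\le C\tau^2\ell\sum_{i}( \|X^{i-1}\|^2+1)^2\|w\|^4$, whose expectation is $\le C\tau^2\ell^2(1+\E{\sup_i\|X^i\|^4})\|w\|^4\le Ct_\ell^2\|w\|^4$. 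Taking the supremum over $\|w\|_{\Bbb H^1_0}\le 1$ (legitimate because the bound is uniform in such $w$) and combining the three pieces gives $\E{\|D\|_{\Bbb H^{-1}}^4}\le Ct_\ell^2$ with $C$ independent of $\eps,h,\tau$.

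The main obstacle is the interchange of the supremum over test functions $w$ with the expectation in the stochastic term: one cannot naively take $\sup_w$ inside the BDG estimate. The clean way is to note that $D$ lies in the finite-dimensional space $\Bbb V_h$, on which $\|\cdot\|_{\Bbb H^{-1}}$ is attained at some (random) $w_*\in\Bbb H^1_0$; alternatively, and more robustly, expand $\|D\|_{\Bbb H^{-1}}^2=(D,(-\Delta)^{-1}D)=\sum_{i,i'}(\text{increments})$ paired through the Riesz map, or simply observe that the bounds above hold with a single constant $C$ uniformly over all $w$ with $\|w\|_{\Bbb H^1_0}\le1$ — each of the three estimates produces $C\,t_\ell^{\,?}\,\|w\|_{\Bbb H^1_0}^{4}$ pointwise in $\omega$ after taking expectations, so the supremum may be taken afterward. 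Care is also needed that $t_\ell^4\le T^2 t_\ell^2$ to absorb the first two (deterministic-order) terms into the claimed $t_\ell^2$ scaling, and that all constants are traced to be independent of $\eps$ (immediate, as the $\eps$-dependent term is the pointwise-bounded TV flux), $h$ (via the $h$-uniform projection stability), and $\tau$.
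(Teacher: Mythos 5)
Your skeleton is the same as the paper's: telescope the scheme from $n+1$ to $n+\ell$, test with $P_h w$ for $w\in\Bbb H^1_0$, use the pointwise bound $\bigl|\nabla X^i/\sqrt{|\nabla X^i|^2+\varepsilon^2}\bigr|\le 1$ together with the $\Bbb H^1_0$-stability of $P_h$ for the flux term, Lemma~\ref{lem_energy} for the zeroth-order term, and a martingale moment inequality for the noise; the two deterministic contributions are handled correctly and give $Ct_\ell^4\le CT^2t_\ell^2$. The gap is exactly where you flag it, and none of your three proposed repairs closes it. Your estimate $\mathbb{E}\,[|M(w)|^4]\le Ct_\ell^2\|w\|_{\Bbb H^1_0}^4$ holds for each \emph{fixed deterministic} $w$; the claim that ``the bounds hold with a single constant uniformly over $w$ \dots so the supremum may be taken afterward'' is precisely the illegitimate interchange $\mathbb{E}\,[\sup_w|M(w)|^4]\le\sup_w\mathbb{E}\,[|M(w)|^4]$, restated rather than resolved. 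Passing to the (random) maximizer $w_*$ on the finite-dimensional space $\Bbb V_h$ does not help either: $w_*$ depends on the whole path up to $t_{n+\ell}$, so $(B_j(X^{i-1}),w_*)\xi^{i,j}_\tau$ is no longer a martingale difference sequence and the BDG step is not available for it. The Riesz-map expansion could be made to work but is not carried out.

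The one-line fix --- and what the paper does --- is to apply Cauchy--Schwarz in $\Bbb L^2$ \emph{pathwise, before any expectation}:
$$
\Big|\sum_{i=n+1}^{n+\ell}\sum_{j=1}^N(B_j(X^{i-1}),w)\,\xi^{i,j}_\tau\Big|
=\Big|\Big(\sum_{i=n+1}^{n+\ell}\sum_{j=1}^N B_j(X^{i-1})\,\xi^{i,j}_\tau,\,w\Big)\Big|
\le\Big\|\sum_{i=n+1}^{n+\ell}\sum_{j=1}^N B_j(X^{i-1})\,\xi^{i,j}_\tau\Big\|\,\|w\|.
$$
The random variable on the right no longer involves $w$, so the supremum over $\|w\|_{\Bbb H^1_0}\le 1$ is taken trivially, and what remains is a fourth-moment bound for the $\Bbb L^2$-\emph{valued} martingale with increments $d_i=\sum_j B_j(X^{i-1})\xi^{i,j}_\tau$. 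This is supplied by the Burkholder--Rosenthal inequality in Hilbert space: using $(\mathbf B_1)$, $\mathbb{E}\,[|\xi^{i,j}_\tau|^2]=\tau$, $\mathbb{E}\,[|\xi^{i,j}_\tau|^4]\le C\tau^2$ and Lemma~\ref{lem_energy}, one checks $\sum_{i}\mathbb{E}\,[\|d_i\|^p\mid\mathcal F^{i-1}_\tau]\le c\,t_\ell^{p/2}\,[1+\max_i\|X^i\|^p]$ for $p\in\{2,4\}$, whence $\mathbb{E}\,\bigl[\|\sum_i d_i\|^4\bigr]\le Ct_\ell^2$. With this replacement of your scalar-martingale step, your argument becomes the paper's proof.
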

\begin{proof}
For any $v\in \mathbb{H}^1_0$ we set $v_h=P_h v$ in \eqref{num_tvf} and get after summing up for $i=n+1,\dots,n+\ell$ 
by the definition of projection $P_h$ that
\begin{align*}
& \ska{X^{n+\ell}-X^{n},v_h} =  \ska{X^{n+\ell}-X^n,P_h v} \leq  \tau \sum_{i=n+1}^{n+\ell}\left\|\frac{\nabla X^i}{\sqrt{|\nabla X^i|^2+\varepsilon^2}}\right\| \|\nabla P_hv\| \\
&\qquad +\tau\sum_{i=n+1}^{n+\ell}\lambda\big(\|X^i\| +\| {g}\|\big)\|P_hv\| + \Big\|\sum_{i=n+1}^{n+\ell}\sum_{j=1}^NB_j(X^{i-1})\xi^{i,j}_{\tau}\Big\|  \|P_h v\| &&\forall v \in \mathbb{H}^1_0. \nonumber
\end{align*}
{On noting that that $\left|\fe{\cdot}\right|\leq 1$
we deduce by the stability of the $\mathbb{L}^2$ projection $\|P_hv\|_{\Bbb H^1_0}\le\kappa\|v\|_{\Bbb H^1_0}$ that
$$
\|X^{n+\ell}-X^{n}\|_{\Bbb H^{-1}}\le Ct_{\ell}[1+\max_{i=1,\dots,N}\,\{\|X^i\|\})]+\left\|\sum_{i=n+1}^{n+\ell}\sum_{j=1}^NB_j(X^{i-1})\xi^{i,j}_\tau\right\|\,.
$$
Hence, we obtain
$$
\E{\|X^{n+\ell}-X^{n}\|^4_{\Bbb H^{-1}}}\le ct^4_{\ell}+ct^2_{\ell}\,,
$$
by the Burkholder-Rosenthal inequality, Lemma \ref{lem_energy} and linear growth of $B:\Bbb L^2\to\mathscr L_2(\ell_2;\Bbb L^2)$. 
Indeed, the martingale difference
$$
d_i=\sum_{j=1}^NB_j(X^{i-1})\xi^{i,j}_\tau\,,
$$
satisfies for $p\in\{2,4\}$
\begin{align*}
\sum_{i=n+1}^{n+l}\Bbb E\,\left[\|d_i\|^p|\mathcal F^{i-1}_\tau\right]&\le c_\kappa\tau^\frac p2\sum_{i=n+1}^{n+l}\left[\sum_{j=1}^N\|B_j(X^{i-1})\|^2\right]^\frac{p}{2}
\le c_\kappa\tau^\frac p2\sum_{i=n+1}^{n+l}\|B(X^{i-1})\|^p_{\mathscr L_2(\ell_2;\Bbb L^2)}
\\
&\le c_\kappa\tau^\frac p2\sum_{i=n+1}^{n+l}[1+\|X^{i-1}\|]^p
\le c_\kappa t^\frac p2_\ell[1+\max_{i=1,\dots,N}\,\{\|X^i\|^p\}].
\end{align*}
}

\end{proof}

\begin{lemma}\label{lem_3} Let $u^0\in\Bbb V_h$, let $G^1,\dots,G^N$ and $H^{0,j},\dots,H^{N-1,j}$ be $\mathcal F_\tau$-adapted random variables in $L^2(\Omega;\Bbb V_h)$ for every $j\in\{1,\dots,N\}$ and define
\begin{equation}\label{uidef}
U^i=u^0-\tau\sum_{\ell=1}^iG^\ell+\sum_{\ell=1}^i\sum_{j=1}^NH^{\ell-1,j}\xi^{\ell,j}_\tau,\qquad i\in\{0,\dots,N\}.
\end{equation}
Then
\begin{align*}
\frac{1}{2}\E{\|X^i-U^i\|^2}+\tau\sum_{\ell=1}^i\E{\mathcal J_\varepsilon(X^\ell)}&\le\frac{1}{2}\|x^0-u^0\|^2+\tau\sum_{\ell=1}^i\E{\mathcal J_\varepsilon(U^\ell)+(G^\ell,X^\ell-U^\ell)}
\\
&+\frac{\tau}{2}\sum_{\ell=1}^i\sum_{j=1}^N\E{\|P_h B_j(X^{\ell-1})-H^{\ell-1,j}\|^2},\quad 0\le i\le N.
\end{align*}
\end{lemma}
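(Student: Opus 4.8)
The plan is to test the scheme \eqref{num_tvf} with $v_h=X^i-U^i$, which is admissible because $u^0\in\Bbb V_h$ and $G^\ell,H^{\ell-1,j}\in\Bbb V_h$ force $U^i\in\Bbb V_h$; the two structural ingredients are the convexity of the G\^ateaux-differentiable functional $\mathcal J_\varepsilon$ on $\Bbb H^1_0$ and the fact that the $\xi^{i,j}_\tau$ are centred, mutually independent, and $(\xi^{i,1}_\tau,\dots,\xi^{i,N}_\tau)$ is independent of $\mathcal F^{i-1}_\tau$. Throughout write $D^i:=X^i-U^i$, so that $D^0=P_hx^0-u^0$ and $X^i-X^{i-1}=(D^i-D^{i-1})+(U^i-U^{i-1})$.

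Inserting $v_h=D^i$ in \eqref{num_tvf} and using the polarization identity $2(D^i-D^{i-1},D^i)=\nos{D^i}-\nos{D^{i-1}}+\nos{D^i-D^{i-1}}$ on the left, together with the convexity inequality
\[
\Big(\fe{X^i},\nabla(X^i-U^i)\Big)+\lambda(X^i-g,X^i-U^i)\ge\mathcal J_\varepsilon(X^i)-\mathcal J_\varepsilon(U^i)
\]
on the right (legitimate since $X^i,U^i\in\Bbb V_h\subset\Bbb H^1_0$), then substituting $U^i-U^{i-1}=-\tau G^i+\sum_{j=1}^NH^{i-1,j}\xi^{i,j}_\tau$ and replacing $B_j(X^{i-1})$ by $P_hB_j(X^{i-1})$ (allowed under an inner product with $D^i\in\Bbb V_h$), one arrives at
\[
\tfrac12\nos{D^i}-\tfrac12\nos{D^{i-1}}+\tfrac12\nos{D^i-D^{i-1}}+\tau\mathcal J_\varepsilon(X^i)\le\tau\mathcal J_\varepsilon(U^i)+\tau(G^i,D^i)+\sum_{j=1}^N\big(P_hB_j(X^{i-1})-H^{i-1,j},D^i\big)\xi^{i,j}_\tau.
\]

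The decisive step is the stochastic term. Splitting $D^i=D^{i-1}+(D^i-D^{i-1})$, the contribution of $D^{i-1}$ is a martingale difference: $P_hB_j(X^{i-1})-H^{i-1,j}$ and $D^{i-1}$ are $\mathcal F^{i-1}_\tau$-measurable while the $\xi^{i,j}_\tau$ are centred and independent of $\mathcal F^{i-1}_\tau$, so it vanishes in expectation. The contribution of $D^i-D^{i-1}$ is controlled by Cauchy--Schwarz and Young by $\tfrac12\nos{D^i-D^{i-1}}+\tfrac12\big\|\sum_{j=1}^N(P_hB_j(X^{i-1})-H^{i-1,j})\xi^{i,j}_\tau\big\|^2$, and its first term is absorbed by the $\tfrac12\nos{D^i-D^{i-1}}$ on the left. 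Taking expectations --- all terms are integrable by Lemma \ref{lem_energy} and the fourth-moment bound on $\xi^{i,j}_\tau$ --- and using mutual independence, centering, $\E{|\xi^{i,j}_\tau|^2}=\tau$, and independence of $\mathcal F^{i-1}_\tau$, the off-diagonal terms in the squared norm drop out and the remaining one has expectation $\tau\sum_{j=1}^N\E{\|P_hB_j(X^{i-1})-H^{i-1,j}\|^2}$. Summing the resulting inequalities over the first $i$ time steps telescopes the $\nos{D^\ell}$-terms, and the Pythagorean bound $\nos{D^0}=\nos{P_hx^0-u^0}\le\nos{x^0-u^0}$ (valid since $u^0\in\Bbb V_h$ and $x^0-P_hx^0\perp\Bbb V_h$) gives exactly the asserted inequality.

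I expect the only real difficulty, beyond routine manipulation, to be the handling of the stochastic term: one has to decompose $D^i$ into its $\mathcal F^{i-1}_\tau$-measurable part $D^{i-1}$ and the increment $D^i-D^{i-1}$, so that the increment is absorbed by the $\tfrac12\nos{D^i-D^{i-1}}$ coming from polarization while the leftover martingale difference is killed in expectation. This is the discrete analogue of an It\^o correction, and keeping precise track of which quantities are $\mathcal F^{i-1}_\tau$-measurable is what makes it go through.
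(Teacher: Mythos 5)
Your proof is correct and follows essentially the same route as the paper, which only sketches the argument: derive the recursion for $D^i=X^i-U^i$, test with $v_h=D^i$, use $(B_j(X^{i-1}),D^i)=(P_hB_j(X^{i-1}),D^i)$, and then proceed as in Lemma~\ref{lem_energy} via convexity of $\mathcal J_\varepsilon$, absorption of $\tfrac12\nos{D^i-D^{i-1}}$, and the martingale/It\^o-isometry structure of the centred independent increments. Your write-up supplies exactly the details the paper leaves implicit, including the correct Young splitting that yields the constant $\tfrac{\tau}{2}$ and the bound $\nos{P_hx^0-u^0}\le\nos{x^0-u^0}$.
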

\begin{proof}
We denote $D^i=X^i-U^i$ and use (\ref{num_tvf}), (\ref{uidef}) to deduce the formula for $(D^i-D^{i-1},v_h)$ for $v_h\in\Bbb V_h$.
We then set $v_h=D^i$, use that $(B_j(X^{i-1}),D^{i})=(P_h B_j(X^{i-1}),D^{i})$ and proceed as in the proof of Lemma \ref{lem_energy}.
\end{proof}

\section{Compactness in locally convex path spaces}\label{SLCS}
{In this section, $Y$ stands for a Hausdorff locally convex space (typically a Hilbert space equipped with the strong or the weak topology), $Y^{[0,T]}$ denotes the space of functions from $[0,T]$ to $Y$ on which we consider the topology of uniform convergence $\tau_{\mathbf u}$. We also define the subspaces $Q_n([0,T];Y)$, $n\in\Bbb N$ spanned by the functions $f\in Y^{[0,T]}$ that are constant on every interval $(t_{i-1}^n,t^n_i)$ for $1\le i\le n$ where $t^k_j=jT/k$, the Hausdorff locally convex path spaces
$$
Q_\infty([0,T];Y)=\bigcup_{n=1}^\infty Q_n([0,T];Y),\qquad Q([0,T];Y)=\overline{Q_\infty([0,T];Y)}\,,
$$
and an important $F_\sigma$ subset of $Q([0,T];Y)$
$$
Q_c([0,T];Y)=Q_\infty([0,T];Y)\cup C([0,T];Y)\,,
$$
that contains both step-functions on equidistant partitions of $[0,T]$ and continuous functions, equipped with the uniform convergence topology, that is best suitable for our purposes in the sequel when piecewise constant processes will converge uniformly to a continuous process. The space of continuous $Y$-valued functions $C([0,T];Y)$ is also equipped with the topology of uniform convergence. 

Further, we define the space
$$
Q_{c,BV}([0,T];\Bbb L^2_w)=\left\{f\in Q_c([0,T];\Bbb L^2_w):\,\int_0^T\|f(s)\|_{BV(\mathcal O)}\,ds<\infty\right\}\,,
$$
as an $F_\sigma$ subset of $Q([0,T];\Bbb L^2_w)$.

Finally, if $M$ is a subset of $Q([0,T];Y)$, we define
$$
M^\uparrow_n=M\setminus\bigcup_{m=1}^{n-1}Q_m([0,T];Y).
$$

\begin{remark}\label{rem_1_ct} Every $f\in Q([0,T];Y)$, as a uniform limit of functions in $Q_\infty([0,T];Y)$, is bounded and also continuous at every {$x=Tr$} for some irrational number $r$. In particular, $f$ is continuous with an exception of an at most countable set and, as such, $f$ is Borel measurable. 
\end{remark}

\begin{remark}\label{rem_3_seq_comp} 
If $Y$ is sequentially complete then $Q([0,T];Y)$ coincides with the space of functions $f\in Y^{[0,T]}$ that are continuous at every $t\in(T\Bbb Q^\complement)\cap[0,T]$ and that have right and left limits at every $t\in[0,T]$.
\end{remark}

\begin{remark} The space $Q([0,T];Y)$ can be also equipped (alternatively) with the Skorokhod topology defined by neighbourhoods 
$$
N_{O,\varepsilon}(f)=\{g:\,\exists\mu\text{ such that }\gamma(\mu)<\varepsilon\text{ and }g(\mu(t))-f(t)\in O\text{ for every }t\in[0,T]\}
$$
where $O$ is an absolutely convex neighbourhood of zero in $Y$, $\varepsilon>0$, $\mu$ is an increasing  bi-Lipschitz continuous homeomorphisms of $[0,T]$ onto $[0,T]$ and $\gamma(\mu)=\|\log\mu^\prime\|_{L^\infty}$. But the Skorokhod topology is strictly weaker than the topology of uniform convergence. In other words, convergence in $Q([0,T];Y)$ implies convergence in the Skorokhod topology but not vice versa. Thus, for our purposes, the space $Q([0,T];Y)$ with the topology of uniorm convergence is the better choice.
\end{remark}

In the next theorem, we characterize compact sets in $Q_c([0,T];Y)$ which play an essential role in this paper. To this end, we present an Arzela-Ascoli theorem.

\begin{thm}\label{comp_char} Let $M$ be a non-empty subset in $Q([0,T];Y)$ and consider the following:
\begin{itemize}
\item[(i)] $\left\{f(t):f\in M\right\}$ is relatively compact in $Y$ for every $t\in [0,T]$;
\item[(ii)] for every $O$ being a neighbourhood of zero in $Y$, there exist $m\in\Bbb N$ and $\delta>0$ such that
$$
\forall|t-s|\le\delta\quad\text{ and }\quad\forall f\in M_m^\uparrow\quad\text{ one has }\quad f(t)-f(s)\in O;
$$
\item[(iii)] the closure of $M$ in $(Y^{[0,T]},\tau_{\mathbf u})$ is a compact subset of $Q_c([0,T];Y)$;
\item[(iv)] $\left\{f(t):\,t\in[0,T],\,f\in M\right\}$ is relatively compact in $Y$.
\end{itemize}
Then
$$
[\,(i)\quad \&\quad (ii)\,]\quad\Longleftrightarrow\quad (iii)\quad\Longrightarrow\quad(iv).
$$
\end{thm}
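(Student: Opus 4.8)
The plan is to establish the three implications $[(i)\,\&\,(ii)]\Rightarrow(iii)$, $(iii)\Rightarrow[(i)\,\&\,(ii)]$ and $(iii)\Rightarrow(iv)$ one after another; the first is the genuine Arzel\`a--Ascoli statement and carries the weight of the argument, the others being short.

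\emph{The direction $[(i)\,\&\,(ii)]\Rightarrow(iii)$.} I would take an arbitrary sequence $(f_k)\subseteq M$ (in the non-metrizable generality, an ultrafilter on $M$, the mechanism being the same) and split according to the ``partition complexity'' of its members. \textbf{Case 1:} there is $m_0$ with $f_k\in\bigcup_{j\le m_0}Q_j([0,T];Y)\subseteq Q_L([0,T];Y)$ for infinitely many $k$, where $L=\operatorname{lcm}(1,\dots,m_0)$. On that subsequence, uniform convergence is equivalent to convergence in $Y$ of the finitely many ``samples'' of $f_k$ (one value on each open subinterval of the partition $\{t^L_i\}$ together with the values at the breakpoints), and those converge along a further subsequence by $(i)$; the limit lies in $Q_L([0,T];Y)\subseteq Q_c([0,T];Y)$. \textbf{Case 2:} after passing to a subsequence we may assume $f_k\notin\bigcup_{j\le k}Q_j([0,T];Y)$, so $f_k\in M^\uparrow_m$ for all $m\le k+1$; then $(ii)$ furnishes, for each neighbourhood $O$ of $0$, a single modulus $(m_O,\delta_O)$ controlling the oscillation of every $f_k$ with $k\ge m_O$. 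Coupling this asymptotic equicontinuity with $(i)$ on a countable dense set $D\subseteq[0,T]$ and a diagonal extraction, a three-term estimate across a finite $\delta_O$-net in $D$ shows $(f_k)$ is uniformly Cauchy, hence uniformly convergent to some $f$; passing to the limit in the oscillation bound shows $f$ is uniformly continuous, so $f\in C([0,T];Y)\subseteq Q_c([0,T];Y)$. In both cases every sequence in $M$ has a subsequence converging uniformly to a point of $Q_c([0,T];Y)$, which yields simultaneously $\overline M\subseteq Q_c([0,T];Y)$ and the (sequential) compactness of $\overline M$; in the metrizable situations that actually occur (e.g.\ $\mathbb{L}^2_w$ restricted to a bounded set) this is compactness, and in general the same dichotomy carried out with filters gives it.

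\emph{The short implications.} For $(iii)\Rightarrow(i)$ it suffices that each evaluation $e_t\colon f\mapsto f(t)$ is continuous from $(Y^{[0,T]},\tau_{\mathbf u})$ to $Y$, so $e_t(\overline M)$ is compact and contains $\{f(t):f\in M\}$. For $(iii)\Rightarrow(iv)$, given a net $f_\gamma(t_\gamma)$ with $f_\gamma\in M$ and $t_\gamma\in[0,T]$, compactness of $\overline M$ and of $[0,T]$ lets us extract $f_\gamma\to f$ uniformly and $t_\gamma\to t$; then $f_\gamma(t_\gamma)-f(t_\gamma)\to 0$, and since $f\in Q_c([0,T];Y)$ is either continuous or a step function it has one-sided limits everywhere, so along a further subnet $f(t_\gamma)$ converges to one of $f(t^-),f(t),f(t^+)\in Y$; hence $\{f(t):t\in[0,T],f\in M\}$ lies in a compact subset of $Y$. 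For $(iii)\Rightarrow(ii)$ I would argue by contradiction: if $(ii)$ failed for some $O$ we could choose $f_k\in M^\uparrow_k$ and $t_k,s_k$ with $|t_k-s_k|\le 1/k$ and $f_k(t_k)-f_k(s_k)\notin O$; by $(iii)$ extract a subnet with $f_k\to f\in Q_c([0,T];Y)$ uniformly and $t_k,s_k\to t^\ast$; since $f_k\in M^\uparrow_k$ with $k\to\infty$, one argues that $f$ must be continuous at $t^\ast$ (ruling out that $f$ is a step function with a jump at $t^\ast$ --- see below), and then continuity at $t^\ast$ together with uniform convergence forces $f_k(t_k)-f_k(s_k)\to 0$, contradicting $f_k(t_k)-f_k(s_k)\notin O$.

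\emph{The main obstacle.} The crux throughout --- and the reason $(ii)$ is phrased with the sets $M^\uparrow_m$ rather than with all of $M$ --- is to separate correctly the ``coarse'' members of $M$, which sit in a fixed $Q_j([0,T];Y)$ and whose only discontinuities lie among a fixed finite set of breakpoints (so are harmless), from the ``fine'' members, on which the equicontinuity hypothesis $(ii)$ bites, and then to verify that every uniform limit one extracts really belongs to $Q_c([0,T];Y)=Q_\infty([0,T];Y)\cup C([0,T];Y)$, i.e.\ is \emph{either} a step function on one fixed equidistant partition \emph{or} genuinely continuous, with no intermediate regulated limit sneaking in. Pinning down the closure $Q([0,T];Y)=\overline{Q_\infty([0,T];Y)}$ finely enough to run this dichotomy --- and, in $(iii)\Rightarrow(ii)$, to exclude a limit that is a step function with a jump exactly at $t^\ast$ --- is the delicate structural point; once it is in place, the remainder is the classical Arzel\`a--Ascoli bookkeeping.
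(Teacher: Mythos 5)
Your handling of $[(i)\,\&\,(ii)]\Rightarrow(iii)$, of $(iii)\Rightarrow(i)$ and of $(iii)\Rightarrow(iv)$ is sound, but it takes a genuinely different route from the paper. You extract a sequence (or ultrafilter) and run a dichotomy according to whether its members stay inside some fixed $\bigcup_{j\le m_0}Q_j([0,T];Y)$ or eventually leave every such union; the paper instead forms the pointwise closure $\overline M^{\tau_{\mathbf p}}$, gets its compactness from Tychonoff and (i), and uses (ii) to manufacture a single finite set $D\subseteq[0,T]$ (containing the breakpoints of the partitions up to level $m$, meeting every nonempty intersection of their subintervals, and forming a $\delta$-net in each) on which $O$-agreement of $f,g\in M$ forces $3O$-agreement everywhere, so that $\tau_{\mathbf p}$ and $\tau_{\mathbf u}$ coincide on the closure. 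The paper's device treats the ``coarse'' and ``fine'' members simultaneously and sidesteps the sequences-versus-nets issue you rightly flag; your version does survive scrutiny (Cauchy nets inside the compact sets $\overline{\{f(t):f\in M\}}$ converge, so no completeness of $Y$ is needed), but only after the filter bookkeeping you defer is actually carried out. Your net argument for $(iii)\Rightarrow(iv)$ matches the paper's ultrafilter argument on $[0,T]\times\overline M$ in substance.

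The genuine gap is in $(iii)\Rightarrow(ii)$, at exactly the point you flag as delicate: the claim that the uniform limit $f$ of the extracted $f_k\in M^\uparrow_k$ ``must be continuous at $t^\ast$'' cannot be established, because it is false --- and in fact the implication $(iii)\Rightarrow(ii)$ fails as literally stated. Take $Y=\mathbb R$, $T=1$, $g=\mathbf 1_{[1/2,1]}\in Q_2$, odd primes $p_k\uparrow\infty$, and
$$
M=\bigl\{\,g_k:=g+\tfrac1k\,\mathbf 1_{(0,1/p_k)}\ :\ k\ge 4\,\bigr\}.
$$
Each $g_k$ has jumps at $1/p_k$ and at $1/2$, so $g_k\in Q_{2p_k}\setminus\bigcup_{j<2p_k}Q_j$ and hence every $M^\uparrow_m$ contains all but finitely many $g_k$. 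Since $\|g_k-g\|_{\infty}=1/k\to 0$, the $\tau_{\mathbf u}$-closure of $M$ is $M\cup\{g\}$, a compact subset of $Q_\infty\subseteq Q_c([0,1];\mathbb R)$, so (iii) holds (and (i) trivially). Yet for $O=(-\tfrac12,\tfrac12)$ and arbitrary $m,\delta$ one can pick $g_k\in M^\uparrow_m$ and $1/p_k<s<1/2\le t$ with $t-s\le\delta$, giving $g_k(t)-g_k(s)=1\notin O$; thus (ii) fails. The obstruction is precisely a limit that is a step function with a jump at $t^\ast=1/2$, approached uniformly by functions of unbounded partition complexity, which is exactly the case your sketch needs to exclude and cannot. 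So this part of your proof cannot be repaired as written; the statement itself would have to be weakened (e.g.\ to $[(i)\,\&\,(ii)]\Rightarrow(iii)\Rightarrow(iv)$) or condition (ii) reformulated, for instance by requiring the oscillation bound only for $s,t$ lying in a common subinterval of the level-$\le m$ partitions. Note that the paper's own proof of this direction consists of the single phrase ``by contradiction'', and the implication is invoked later (in the proof of Corollary \ref{cor_2_me}), so this is an issue with the theorem as stated rather than a defect peculiar to your argument.
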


\begin{proof}
See Section \ref{proof_comp_char}.
\end{proof}

\begin{remark}\label{rem_4} If $Y$ is sequentially complete and $M$ is relatively compact in $Q([0,T];Y)$ then (iv) in Theorem \ref{comp_char} still holds with the same proof.
\end{remark}

\begin{corollary}\label{cor_1_me}
If compacts of $Y$ are metrizable and $M$ satisfies (iv) in Theorem \ref{comp_char} then $M$ is also metrizable. 
\end{corollary}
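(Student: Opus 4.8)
The plan is to exhibit $M$ as a topological subspace of a space of functions valued in the compact set $K$ supplied by (iv), and to metrize the latter directly. Concretely, set $K:=\overline{\{f(t):t\in[0,T],\ f\in M\}}$, which is a compact subset of $Y$ by (iv) and is Hausdorff since $Y$ is; by the standing hypothesis $K$ is metrizable, so fix a metric $\rho$ on $K$ inducing its subspace topology. The first key point is that, $K$ being compact Hausdorff, it carries a \emph{unique} uniformity compatible with its topology; hence the uniformity on $K$ generated by the sets $U_O:=\{(x,y)\in K\times K:\ y-x\in O\}$, as $O$ ranges over neighbourhoods of $0$ in $Y$, coincides with the metric uniformity generated by the sets $V_\delta:=\{(x,y)\in K\times K:\ \rho(x,y)<\delta\}$, $\delta>0$. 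In particular, for every $\delta>0$ there is a neighbourhood $O$ of $0$ with $U_O\subseteq V_\delta$, and for every such $O$ there is $\delta>0$ with $V_\delta\subseteq U_O$.

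Next I would transfer this to the function level. Every $f\in M$ satisfies $f(t)\in K$ for all $t$, so $M\subseteq K^{[0,T]}$, and the topology that $(Y^{[0,T]},\tau_{\mathbf u})$ induces on $M$ agrees with the one induced by the uniform-convergence topology on $K^{[0,T]}$. Since $\rho$ is bounded on the compact $K$, the formula $d_\infty(f,g):=\sup_{t\in[0,T]}\rho(f(t),g(t))$ defines a genuine (finite) metric on $K^{[0,T]}$. Unwinding definitions with the two inclusions from the previous paragraph gives: for each neighbourhood $O$ of $0$ in $Y$ there is $\delta>0$ such that $d_\infty(f,g)<\delta$ forces $g(t)-f(t)\in O$ for all $t$, and conversely for each $\delta>0$ there is $O$ such that $g(t)-f(t)\in O$ for all $t$ forces $d_\infty(f,g)\le\delta$. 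Hence the uniformity of uniform convergence on $K^{[0,T]}$ and the $d_\infty$-uniformity coincide, so $(K^{[0,T]},\tau_{\mathbf u})$ is metrizable, and therefore so is its subspace $M$.

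The only nontrivial ingredient is the uniqueness of the uniformity on a compact Hausdorff space, which is precisely what lets us pass between the translation-invariant vector-space uniformity restricted to $K$ and the metric uniformity of $\rho$; everything else is a direct, routine verification. I would also note in passing that, by Remark \ref{rem_4}, the conclusion applies to relatively compact $M$ in $Q([0,T];Y)$ when $Y$ is sequentially complete, since (iv) holds there as well.
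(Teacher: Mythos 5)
Your proof is correct, and it reaches the same endpoint as the paper's — a supremum metric over a metrized compact set of values — but it justifies the crucial step by a different device. The delicate point in either argument is that the sup-metric topology on $M$ coincides with the trace of $\tau_{\mathbf u}$, whose basic neighbourhoods are defined by translation ($g(t)-f(t)\in O$). The paper arranges this by construction: it passes to the compact set $C=\bigcup_{\max\{|a|,|b|\}\le 1}(aK+bK)\supseteq K-K$, uses its metrizability (via the separating continuous linear functionals of Remark \ref{rem_qps}) to extract countably many continuous pseudonorms $|\cdot|_n$ forming a neighbourhood base of zero relative to $C$, and sums them into a translation-invariant metric $d$ on $Y$, so that the equivalence between ``$f(t)-g(t)\in O$ for all $t$'' and ``$\sup_t d(f(t),g(t))$ small'' is immediate. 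You instead keep an arbitrary metric $\rho$ on the compact $K$ itself and recover the same equivalence from the uniqueness of the uniformity compatible with the topology of a compact Hausdorff space, which identifies the $\rho$-uniformity with the relative additive uniformity $\{U_O\}$ induced from $Y$; the two uniformities of uniform convergence on $K^{[0,T]}$ then agree, and $d_\infty$ metrizes the trace of $\tau_{\mathbf u}$ on $M$. Your route trades the explicit pseudonorm construction and the auxiliary set $C$ for the (standard, e.g.\ in Kelley) uniqueness theorem for uniformities on compacta; both arguments are complete, and yours is arguably the cleaner abstract packaging of the same fact.
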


\begin{proof}
See Section \ref{proof_cor_1_me}.
\end{proof}

Now we provide an easy test for checking Borel measurability of $Q_c([0,T];Y)$-valued random variables. It turns out that pointwise measurability and Borel measurability coincide for mappings with a $\sigma$-compact range in $Q_c([0,T];Y)$ provided that compact sets in $Y$ are metrizable.

\begin{corollary}\label{cor_2_me} Let compacts of $Y$ be metrizable and let $M$ be $\sigma$-compact in $Q_c([0,T];Y)$. Then
$$
V\in\mathscr B(Q([0,T];Y))\quad\Longleftrightarrow\quad V\in\mathcal Y_T\,,
$$
holds for every $V\subseteq M$ where 
$$
\mathcal Y_T=\sigma(\pi_s:\,s\in(T\Bbb Q)\cap[0,T]),\qquad\pi_s:Q([0,T];Y)\to Y:f\mapsto f(s).
$$
\end{corollary}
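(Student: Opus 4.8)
The plan is to exploit the $\sigma$-compactness of $M$ to reduce to a single compact set and then to embed that compact set into a metrizable countable product of copies of $Y$ via the evaluation maps at rational times. The inclusion $\mathcal Y_T\subseteq\mathscr B(Q([0,T];Y))$ is immediate: each $\pi_s$ is continuous from $(Q([0,T];Y),\tau_{\mathbf u})$ to $Y$ because uniform convergence implies pointwise convergence, hence $\pi_s$ is Borel and $\mathcal Y_T=\sigma(\pi_s:s\in(T\Bbb Q)\cap[0,T])\subseteq\mathscr B(Q([0,T];Y))$; this already gives the implication ``$V\in\mathcal Y_T\Rightarrow V\in\mathscr B(Q([0,T];Y))$'' for every $V$.

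For the converse I would write $M=\bigcup_n K_n$ with each $K_n$ compact in $Q_c([0,T];Y)$, hence compact (and closed) in $Q([0,T];Y)$. Since $K_n$ is already compact, its closure in $(Y^{[0,T]},\tau_{\mathbf u})$ is $K_n$ itself, so Theorem~\ref{comp_char} (implication (iii)$\Rightarrow$(iv)) applied to $K_n$ in place of $M$ shows that $\mathcal K_n:=\{f(t):t\in[0,T],\ f\in K_n\}$ is relatively compact in $Y$; since compacts of $Y$ are metrizable, $\overline{\mathcal K_n}$ is compact metrizable, hence second countable. Writing $S:=(T\Bbb Q)\cap[0,T]$, I consider $\Phi:Q([0,T];Y)\to Y^{S}$, $\Phi(f)=(f(s))_{s\in S}$, which is continuous (it is continuous coordinatewise) and injective: if $\Phi(f)=\Phi(g)$ then $f$ and $g$ agree on the dense set $S$, and since by Remark~\ref{rem_1_ct} every element of $Q([0,T];Y)$ is continuous at $Tr$ for irrational $r$, they agree on all of $[0,T]$. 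Restricted to the compact set $K_n$, $\Phi$ is then a homeomorphism onto $\Phi(K_n)\subseteq\overline{\mathcal K_n}^{S}$, and $\overline{\mathcal K_n}^{S}$ is a compact separable metrizable space.

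Next I would check that $K_n\in\mathcal Y_T$: as a closed subset of the separable metrizable space $\overline{\mathcal K_n}^{S}$, $\Phi(K_n)$ is Borel there, hence lies in $\bigotimes_{s\in S}\mathscr B(\overline{\mathcal K_n})$, which is the trace on $\overline{\mathcal K_n}^{S}$ of $\bigotimes_{s\in S}\mathscr B(Y)=\sigma(\mathrm{pr}_s:s\in S)$ (here $\mathrm{pr}_s:Y^S\to Y$); since $\overline{\mathcal K_n}^{S}=\bigcap_{s\in S}\mathrm{pr}_s^{-1}(\overline{\mathcal K_n})$ itself belongs to $\bigotimes_{s\in S}\mathscr B(Y)$, it follows that $\Phi(K_n)\in\sigma(\mathrm{pr}_s:s\in S)$ and hence $K_n=\Phi^{-1}(\Phi(K_n))\in\sigma(\pi_s:s\in S)=\mathcal Y_T$. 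Now, given $V\subseteq M$ with $V\in\mathscr B(Q([0,T];Y))$, the set $V\cap K_n$ lies in the trace $\mathscr B(Q([0,T];Y))\cap K_n=\mathscr B(K_n)$; transporting along the homeomorphism $\Phi|_{K_n}$, using that the Borel $\sigma$-algebra of the second countable space $\overline{\mathcal K_n}^{S}$ equals $\sigma(\mathrm{pr}_s:s\in S)$ and that forming a generated $\sigma$-algebra commutes with taking traces, one obtains $\mathscr B(K_n)=\sigma(\pi_s|_{K_n}:s\in S)=\mathcal Y_T\cap K_n$. Hence $V\cap K_n=A_n\cap K_n$ for some $A_n\in\mathcal Y_T$, and since $K_n\in\mathcal Y_T$ this gives $V\cap K_n\in\mathcal Y_T$; finally $V=\bigcup_n(V\cap K_n)\in\mathcal Y_T$.

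The main obstacle is that $Q([0,T];Y)$ and its compact subsets need not be metrizable, so metric (or even second countable) measure theory is not available on the path space itself; the embedding $\Phi$ is what circumvents this, and its good behaviour on $K_n$ rests on two inputs established earlier, namely that Theorem~\ref{comp_char}(iv) supplies a single compact metrizable subset $\overline{\mathcal K_n}$ of $Y$ containing all values $f(s)$ with $f\in K_n$, and that Remark~\ref{rem_1_ct} makes $\Phi$ injective. The remaining ingredients — the Borel $\sigma$-algebra of a subspace is the trace of the ambient Borel $\sigma$-algebra, $\sigma(\cdot)$ commutes with taking traces, and for a countable product of second countable spaces the Borel and product $\sigma$-algebras coincide — are standard and would be invoked without further detail.
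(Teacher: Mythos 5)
Your proof is correct. Both directions are handled properly: the inclusion $\mathcal Y_T\subseteq\mathscr B(Q([0,T];Y))$ follows from continuity of the evaluations $\pi_s$, and for the converse your two key inputs are exactly the right ones — Theorem~\ref{comp_char} (iii)$\Rightarrow$(iv) applied to each compact piece $K_n$ (which is legitimate, since a compact subset of the Hausdorff space $Y^{[0,T]}$ is closed, so (iii) holds for $K_n$), and Remark~\ref{rem_1_ct}, which makes the evaluation map $\Phi$ at rational times injective on all of $Q([0,T];Y)$ and hence allows you to write $K_n=\Phi^{-1}(\Phi(K_n))$.

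The route is genuinely different from the paper's, though it rests on the same two pillars (metrizability of the compact range in $Y$, and determination of paths by their values at times in $(T\Bbb Q)\cap[0,T]$). The paper first builds an explicit metric $D(f,g)=\sup_t d(f(t),g(t))$ on a compact $M$ via Corollary~\ref{cor_1_me}, observes that $f\mapsto D(f,g)$ is $\mathcal Y_T$-measurable because the supremum may be taken over rational times, and uses separability of $(M,D)$ to identify the trace $\sigma$-algebras; it then exhibits an explicit compact superset $R\supseteq M$, written purely in terms of evaluations at rational times via the Arzela--Ascoli conditions of Theorem~\ref{comp_char}, to conclude $M\in\mathcal Y_T$. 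You instead transport everything through the continuous injection $\Phi:f\mapsto(f(s))_{s\in S}$ into the compact metrizable, second countable product $\overline{\mathcal K_n}^{\,S}$, where the identification of the Borel and product $\sigma$-algebras does all the work at once: it yields both $K_n\in\mathcal Y_T$ and the coincidence of the trace $\sigma$-algebras on $K_n$, with no need for Corollary~\ref{cor_1_me} or for the explicit set $R$. What the paper's version buys is the concrete metric $D$ and the explicit $\mathcal Y_T$-measurable compact $R$, both of which are reused elsewhere; what yours buys is economy — a single standard embedding argument replacing two separate constructions.
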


\begin{proof}
See Section \ref{proof_cor_2_me}.
\end{proof}

\begin{remark}\label{rem_qps} Let us recall that a compact $K$ is metrizable if and only if there exists a countable family of real continuous functions on $K$ separating points of $K$ (see e.g. \cite{jakub}). In case of Hausdorff locally convex spaces $Y$, those functions can be chosen in such a way that they are linear and continuous on $Y$. Hence compacts are metrizable in all spaces where there exists a countable family of continuous functions separating points of that space. In particular, compact sets are metrizable e.g. in analytic spaces (see e.g. \cite[Corollary 6.7.8]{bog}) among which all separable Fr\'echet spaces equipped with any locally convex topology weaker than or equal to the metric one belong.
\end{remark}

\begin{example}\label{pc_comp} If $K$ is a set in $Y$ then we denote by $C_n([0,T];K)$ the space of functions $f:[0,T]\to K$ that satisfy
$$
f(t)=\frac{t-t_{i-1}}{\tau}f(t_i)+\frac{t_i-t}{\tau}f(t_{i-1}),\qquad t\in[t_{i-1},t_i]\,,
$$
for every $i\in\{1,\dots,n\}$ where $t_i=i\tau$ and $\tau=T/n$. If $K$ is compact then $C_n([0,T];K)$ is compact in $C([0,T];Y)$.
\end{example}

\begin{proof}
Indeed, $C_n([0,T];K)$ is closed. Now, if $O$ is an absolutely convex neighbourhood of zero then $K\subseteq\lambda O$ for some $\lambda>0$, and so $f(t)-f(s)\in 2\lambda\tau^{-1}(t-s)O$ holds for every $s,t\in[0,T]$ and every $f\in C_n([0,T];K)$. Hence $C_n([0,T];K)$ is relatively compact by Theorem \ref{comp_char}.
\end{proof}

We will need the following version of the Prokhorov theorem.

\begin{thm}\label{prokhorov} Let $Z$ be a completely regular topological space, let $\{\mu_n\}$ be Borel probability measures such that there exist metrizable compacts $K_j$ such that
$$
\sup_j\,[\inf_n\,\mu_n(K_j)]=1.
$$
Then there exists a subsequence $\{\mu_{n_k}\}$ that converges to a Radon probability measure $\mu$ on $Z$.
\end{thm}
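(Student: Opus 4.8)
The plan is to derive the statement from the classical portmanteau theorem on compact metric spaces, extracting a subsequence by diagonalisation and then assembling the limit measure explicitly, thereby avoiding any Riesz/Daniell representation on the possibly pathological space $Z$.

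First I would normalise the hypothesis. Replacing $K_j$ by $K_1\cup\dots\cup K_j$ — a finite union of metrizable compacta is again a metrizable compactum, being compact Hausdorff and carrying a countable network — we may assume $K_1\subseteq K_2\subseteq\dots$, and the hypothesis becomes: there are $\varepsilon_j\downarrow 0$ with $\mu_n(Z\setminus K_j)\le\varepsilon_j$ for all $n$ and $j$. For each $j$ the space of sub-probability Radon measures on the compact metric space $K_j$ is weak-$*$ compact and metrizable (because $C(K_j)$ is separable), so a diagonal argument over $j\in\mathbb N$ produces one subsequence $(n_k)$ and Radon sub-probability measures $\nu_j$ on $K_j$ with $\mu_{n_k}(\,\cdot\cap K_j)\to\nu_j$ weak-$*$ on $K_j$ for every $j$.

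The crux is to show that, regarding each $\nu_j$ as a Borel measure on $Z$ concentrated on $K_j$, one has $\nu_i\le\nu_j$ whenever $i\le j$. By inner regularity it suffices to prove $\nu_i(C)\le\nu_j(C)$ for every compact $C\subseteq K_i$. Given an open set $U\supseteq C$ in $K_j$, normality of the metric space $K_j$ provides an open $U'$ in $K_j$ with $C\subseteq U'$ and $\operatorname{cl}_{K_j}(U')\subseteq U$; applying the portmanteau inequality for open sets to $\mu_{n_k}(\,\cdot\cap K_i)\to\nu_i$ and for closed sets to $\mu_{n_k}(\,\cdot\cap K_j)\to\nu_j$, and using $K_i\subseteq K_j$ along the common sequence $\mu_{n_k}(\,\cdot\,)$, we get
$$\nu_i(C)\le\nu_i(U'\cap K_i)\le\liminf_k\mu_{n_k}(U'\cap K_i)\le\limsup_k\mu_{n_k}(\operatorname{cl}_{K_j}(U'))\le\nu_j(\operatorname{cl}_{K_j}(U'))\le\nu_j(U),$$
and the infimum over $U$ gives the claim. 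I expect this to be the main obstacle of the proof: the naive expectation that $\nu_j$ restricts to $\nu_i$ on $K_i$ is simply false — restriction of a measure to a closed set is not weak-$*$ continuous, as mass can escape in the limit — so only the one-sided inequality survives, and its proof is forced to interleave the portmanteau bounds on the two distinct compacta $K_i\subseteq K_j$.

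Granting the monotonicity, I would set $\mu(B):=\sup_j\nu_j(B\cap K_j)$ for $B\in\mathscr B(Z)$. Monotone convergence, together with $\nu_j\le\nu_{j+1}$, shows $\mu$ is a Borel measure concentrated on the $\sigma$-compact set $\bigcup_jK_j$ and inner regular by compacts, hence Radon; moreover $\mu(Z)=\sup_j\nu_j(K_j)=1$ since $\nu_j(K_j)=\lim_k\mu_{n_k}(K_j)\ge1-\varepsilon_j$, and one reads off $\mu(Z\setminus K_j)\le\varepsilon_j$ and $\|\mu(\,\cdot\cap K_j)-\nu_j\|\le\mu(K_j)-\nu_j(K_j)\le\varepsilon_j$. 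Finally, for $f\in C_b(Z)$ and any $j$, splitting $\int f\,\mathrm d\mu_{n_k}-\int f\,\mathrm d\mu$ into four pieces and estimating them respectively by $\mu_{n_k}(Z\setminus K_j)\le\varepsilon_j$, by weak-$*$ convergence on $K_j$ (the term tending to $0$ in $k$), by $\|\mu(\,\cdot\cap K_j)-\nu_j\|\le\varepsilon_j$, and by $\mu(Z\setminus K_j)\le\varepsilon_j$, yields $\limsup_k|\int f\,\mathrm d\mu_{n_k}-\int f\,\mathrm d\mu|\le 3\|f\|_\infty\varepsilon_j$; letting $j\to\infty$ shows $\mu_{n_k}\to\mu$ weakly, which completes the proof.
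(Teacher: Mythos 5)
Your proof is correct. Note, however, that the paper does not prove this statement at all: it is quoted verbatim from the literature, with the proof outsourced to \cite[Theorem 8.6.7]{bog}. Bogachev's argument for Prokhorov-type theorems on completely regular spaces runs through functional-analytic compactness on $C_b(Z)$ (equivalently, passage to a compactification of $Z$), producing a limit functional whose representation as a Radon measure must then be recovered from uniform tightness. Your route is more hands-on: you extract, by diagonalisation, weak-$*$ limits $\nu_j$ of the restrictions to the (metrizable, hence well-behaved) compacts $K_j$ and assemble $\mu=\sup_j\nu_j$ directly, so the only representation theorem used is Riesz on a compact metric space. The one genuinely delicate point -- that restriction to $K_i$ is not weak-$*$ continuous, so one only gets $\nu_i\le\nu_j$ rather than consistency, and this must be proved by interleaving the open-set portmanteau bound on $K_i$ with the closed-set bound on $K_j$ via a normality sandwich -- is exactly where a naive version of this construction would break, and you handle it correctly; the remaining verifications (that the increasing supremum of the $\nu_j$ is a Borel measure, inner regular by compacts, of total mass $1$, and that the $3\varepsilon_j$-splitting gives weak convergence) are all sound. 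What your approach buys is a self-contained, compactification-free proof tailored to the hypothesis that the $K_j$ are metrizable; what the citation buys the authors is brevity and the full strength of Bogachev's more general statement.
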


\begin{proof}
See \cite[Theorem 8.6.7.]{bog}.
\end{proof}

Weak convergence of tight probability measures is actually more powerful than it might seem. Let us present a reinforcement of the Portmanteau theorem, cf. \cite[Lemma 1.10]{opw}.

\begin{proposition}\label{portmanteau} Let $Z$ be a completely regular topological space, let $\{\mu_n\}$ and $\mu$ be Radon probability measures on $Z$ such that $\langle f,\mu_n\rangle\to\langle f,\mu\rangle$ for every $f\in C_b(Z)$ and, for every $r>0$, there exist metrizable closed sets $K_{r,n}\searrow K_{r,\infty}$ such that
$$
\mu_n(K_{r,n})\ge 1-r,\qquad n\in\Bbb N.
$$
Let $F_n,F:Z\to[-\infty,\infty]$ be such that $F_n|_{{K_{r,n}}}$, $F|_{{K_{r,n}}}$ are $\mathscr B({K_{r,n}})$-measurable for every {$r>0$ and} $n\in\Bbb N$, and denote by $\mu^*$ the outer measure associated with $\mu$. Then $F_n$ is $\mu_n$-measurable for every $n\in\Bbb N$, $F$ is $\mu$-measurable and the following holds:
\begin{enumerate}
\item If $F_n$ and $F$ are non-negative and $\mu^*(D_r)=0$ for every $r\in(0,1)$ where
$$
D_r=\{x\in K_{r,\infty}:\,\exists x_n\in K_{r,n},\,x_n\to x,\,\liminf F_n(x_n)<F(x)\}\,,
$$
then
$$
\int_ZF\,\dd\mu\le\liminf\int_ZF_n\,\dd\mu_n.
$$
\item If $\mu^*(D_r)=0$ for every $r\in(0,1)$ where
$$
D_r=\{x\in K_{r,\infty}:\,\exists x_n\in K_{r,n},\,x_n\to x,\,\limsup|F_n(x_n)-F(x)|>0\}\,,
$$
and
$$
\lim_{R\to\infty}\left[\sup_{n\in\Bbb N}\int_{[|F_n|>R]}|F_n|\,\dd\mu_n\right]=0\,,
$$
then
$$
\lim\int_ZF_n\,\dd\mu_n=\int_ZF\,\dd\mu.
$$
\end{enumerate}
\end{proposition}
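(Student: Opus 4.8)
The plan is to reduce the whole statement to a single $\Gamma$-liminf-type estimate for weakly convergent Radon measures, obtained by Lipschitz regularisation, after a few soft preliminaries. First I would record the basic mass bounds: since $K_{r,n}\searrow K_{r,\infty}$ are closed, $\mu_n(K_{r,m})\ge\mu_n(K_{r,n})\ge 1-r$ for $n\ge m$, so weak convergence (the ``$\limsup$'' half of the Portmanteau theorem for closed sets, valid for Radon measures on the completely regular $Z$) gives $\mu(K_{r,m})\ge 1-r$, and continuity of the finite measure $\mu$ from above yields $\mu(K_{r,\infty})\ge 1-r$; in particular $\mu_n(Z\setminus K_{r,n})\le r$ and $\mu(Z\setminus K_{r,\infty})\le r$. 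Taking $r=1/k$ and using that each $K_{r,n}$ is Borel in $Z$ while $F|_{K_{r,n}}$ and $F_n|_{K_{r,n}}$ are Borel, every preimage $F^{-1}(B)$ differs from a Borel set by a subset of the $\mu$-null set $Z\setminus\bigcup_kK_{1/k,\infty}$, so $F$ is $\mu$-measurable, and likewise each $F_n$ is $\mu_n$-measurable.

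\emph{Core inequality, bounded case.} I would prove (1) first when $0\le F_n,F\le M$. Fix $r\in(0,1)$, fix a metric on the metrizable set $K_{r,1}$ (which contains every $K_{r,n}$) inducing its topology, and extend it to a continuous pseudo-metric $\rho\le 1$ on $Z$. Put
\[
\psi_m(x)\ :=\ M\wedge\inf\bigl\{F_n(y)+m\,\rho(x,y)\ :\ n\ge m,\ y\in K_{r,n}\bigr\},\qquad x\in Z .
\]
Then $\psi_m$ is continuous with $0\le\psi_m\le M$ (the inner infimum is an infimum of $\rho$-Lipschitz functions which is finite on $K_{r,m}$, hence finite, $\rho$-Lipschitz and $Z$-continuous), $\psi_m\le\psi_{m+1}$, and $\psi_m\le F_n$ on $K_{r,n}$ whenever $n\ge m$ (take $y=x$). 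The decisive point is that $\sup_m\psi_m\ge F$ on $K_{r,\infty}\setminus D_r$: if $x\in K_{r,\infty}$ had $\sup_m\psi_m(x)<F(x)\le M$, then, the cap being inactive, one could pick $n_m\ge m$ (which may be taken strictly increasing) and $y_m\in K_{r,n_m}$ with $F_{n_m}(y_m)+m\rho(x,y_m)<F(x)-\delta$ for some $\delta>0$ and all large $m$; since $F_{n_m}(y_m)\ge 0$ this forces $\rho(x,y_m)\le M/m\to 0$, i.e. $y_m\to x$ in $K_{r,1}$, hence in $Z$, with $F_{n_m}(y_m)<F(x)-\delta$, and filling the remaining indices with $x\in K_{r,\infty}$ yields a sequence $z_k\in K_{r,k}$, $z_k\to x$, $\liminf_kF_k(z_k)\le F(x)-\delta<F(x)$, i.e. $x\in D_r$. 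Now, for $n\ge m$,
\[
\int_ZF_n\,\dd\mu_n\ \ge\ \int_{K_{r,n}}\psi_m\,\dd\mu_n\ =\ \int_Z\psi_m\,\dd\mu_n-\int_{Z\setminus K_{r,n}}\psi_m\,\dd\mu_n\ \ge\ \int_Z\psi_m\,\dd\mu_n-Mr ,
\]
so letting $n\to\infty$ (weak convergence, $\psi_m\in C_b(Z)$), then $m\to\infty$ (monotone convergence in $m$), and using $\mu^*(D_r)=0$ together with $\mu(Z\setminus K_{r,\infty})\le r$,
\[
\liminf_n\int_ZF_n\,\dd\mu_n\ \ge\ \int_Z\Bigl(\sup_m\psi_m\Bigr)\dd\mu-Mr\ \ge\ \int_{K_{r,\infty}}F\,\dd\mu-Mr\ \ge\ \int_ZF\,\dd\mu-2Mr ,
\]
and $r\to 0$ finishes the bounded case.

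\emph{Removing boundedness and deducing (2).} For general $0\le F_n,F$, the bad set of $(F_n\wedge M,F\wedge M)$ lies in $D_r$ (since $\liminf(F_n\wedge M)(x_n)\ge(\liminf F_n(x_n))\wedge M$), so the bounded case gives $\int(F\wedge M)\,\dd\mu\le\liminf_n\int F_n\,\dd\mu_n$, and monotone convergence as $M\to\infty$ proves (1). For (2), the set $D_r$ there is exactly the union of the one-sided bad sets of $F_n$ and of $-F_n$, so (1) applied to $(|F_n|,|F|)$ (whose bad set is $\mu^*$-null) gives $\int|F|\,\dd\mu\le\liminf_n\int|F_n|\,\dd\mu_n<\infty$ by the uniform-integrability hypothesis, i.e. $F\in L^1(\mu)$. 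Applying (1) to the nonnegative pair $(F_n\vee(-R)+R,\ F\vee(-R)+R)$ (again with $\mu^*$-null bad set) and using $\int\bigl(F_n\vee(-R)-F_n\bigr)\dd\mu_n\le\sup_n\int_{\{|F_n|>R\}}|F_n|\,\dd\mu_n\to 0$ as $R\to\infty$, together with dominated convergence ($|F\vee(-R)|\le|F|$), gives $\int F\,\dd\mu\le\liminf_n\int F_n\,\dd\mu_n$; applying the same to $(-F_n,-F)$ gives $\limsup_n\int F_n\,\dd\mu_n\le\int F\,\dd\mu$, which proves (2).

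\emph{Main obstacle.} The routine parts (the mass bounds, the measurability claims, the truncation arguments of the last step) are soft. The crux — and the step I expect to need the most care — is the construction of the family $\psi_m$ in the core inequality: it must simultaneously be globally continuous and bounded on $Z$ so that weak convergence $\mu_n\to\mu$ can be invoked, stay below $F_n$ on the ``good'' set $K_{r,n}$, and have supremum $\ge F$ off the exceptional set $D_r$, while the mass $\mu_n(Z\setminus K_{r,n})$ escaping the good sets must be controlled uniformly in $m$ — which is precisely why $\psi_m$ is capped at level $M$ rather than taken to be the bare inf-convolution. The one genuinely topological ingredient is transporting the metric of the metrizable set $K_{r,1}$ to a continuous pseudo-metric on the merely completely regular space $Z$ (automatic when the $K_{r,n}$ are compact, which is the situation of interest).
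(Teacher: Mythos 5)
Your reductions (the mass bounds $\mu(K_{r,\infty})\ge 1-r$, the measurability claims, and the truncation arguments that pass from the bounded non-negative case to (1) and then to (2)) are sound, and the capped inf-convolution $\psi_m$ is a genuinely different route from the paper's: the paper instead shows $\liminf_n\mu_n(F_n>t)\ge\mu(F>t)$ by applying the classical Portmanteau inequality to the closed sets $\bigcap_m\overline{\bigcup_{k\ge m}[F_k\le t]\cap K_{r,k}}$ and then integrates the layer-cake formula with Fatou. There is, however, one genuine gap, and it is precisely the step you flag at the end: the existence of a continuous pseudometric $\rho$ on $Z$ whose restriction to the closed metrizable set $K_{r,1}$ induces its topology. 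This is not guaranteed by the hypotheses. Concretely, if $Z$ is separable and Tychonoff, then for any continuous pseudometric $\rho$ on $Z$ the pseudometric space $(Z,\rho)$ is separable, hence second countable, so $\rho$ cannot induce the discrete topology on an uncountable closed discrete (hence metrizable) subspace; such pairs exist (the Sorgenfrey plane with its antidiagonal, or Mr\'owka's $\Psi$-space with its closed discrete part), and they are compatible with the measure-theoretic hypotheses (take $\mu_n$ to be Dirac measures on that subspace). Your fallback ``automatic when the $K_{r,n}$ are compact'' is correct (extend a countable separating family of continuous functions from the compact set to $Z$ via the Stone--\v Cech compactification and Tietze), but it does not literally cover the paper's own application either: in Lemma \ref{cont_meas} the sets $\mathcal K_{R,n}$ are products of compacts with the non-compact factors $C([0,T])$, hence closed and metrizable but not compact. (There the pseudometric can still be assembled factorwise, so the application survives, but the proposition as stated does not follow from your argument.)

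The paper's proof is immune to this because it uses the metrizability of $K_{r,m}$ only to know that every point in the closure of a subset of $K_{r,m}$ is a limit of a sequence from that subset --- i.e.\ only first countability of the subspace --- and the only test objects fed to the weak convergence $\mu_n\to\mu$ are indicators of closed subsets of $Z$, handled by the classical Portmanteau inequality, rather than globally defined Lipschitz regularisations. Everything else in your write-up checks out: the monotonicity $\psi_m\le\psi_{m+1}$, the bound $\psi_m\le F_n$ on $K_{r,n}$ for $n\ge m$, the identification of $\{\sup_m\psi_m<F\}\cap K_{r,\infty}$ as a subset of $D_r$ (including filling in the missing indices with $x\in K_{r,\infty}$), the $Mr$ error terms, and the deduction of (2) from (1) via $|F_n|$ and the pairs $(\pm F_n)\vee(-R)+R$ together with uniform integrability. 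To repair the proof in the stated generality you must either add the hypothesis that the compatible metric on $K_{r,1}$ extends to a continuous pseudometric on $Z$, or replace the construction of $\psi_m$ by the sublevel-set argument.
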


\begin{proof}
See Section \ref{proof_portmanteau}.
\end{proof}

}

\section{Tightness properties of the numerical approximation}\label{sec_conv}

We consider the interpolants $X_\tau$, $\overline{X}_\tau$, $\underline{X}_\tau$
defined in (\ref{xtaulin}), (\ref{xtaubar}), respectively.
As in the previous section, to simplify the notation, the dependence of $X_\tau$, $\overline{X}_\tau$ and $\underline{X}_\tau$ on $\varepsilon$, $h$ and $\tau$ will not be displayed for clarity reasons until it matters.

The next lemma is a direct consequence of the a priori estimates in Lemma~\ref{lem_energy}.
\begin{lemma}\label{un_est}
The interpolants of the numerical solution of the scheme (\ref{num_tvf}) satisfy the following bounds:
\begin{align}
& \mathbb{E}\Big[ \|\overline{X}_\tau\|^2_{L^1(0,T;\mathbb{W}_0^{1,1})}\Big]  \leq C, \,\,{\mathbb{E}\Big[ \|\underline{X}_\tau\|^2_{L^1(\tau,T;\mathbb{W}_0^{1,1})}\Big]  \leq C\,,\mathbb{E}\Big[ \|X_\tau\|^2_{L^1(\tau,T;\mathbb{W}_0^{1,1})}\Big]  \leq C\,,}
\\
& \mathbb{E}\Big[\|\overline{X}_\tau\|_{L^\infty(0,T;\mathbb{L}^2)}^4\Big]  \leq C, \,\, \mathbb{E}\Big[\|\underline{X}_\tau\|_{L^\infty(0,T;\mathbb{L}^2)}^4\Big] \leq C\,,
\,\, \mathbb{E}\Big[\|X_\tau\|_{C([0,T];\mathbb{L}^2)}^4\Big] \leq C\,,
\\
& 
{\mathbb{E}\Big[\|\overline{X}_\tau-X_\tau\|_{L^q(0,T;\mathbb{L}^2)}^4\Big]  \leq C\tau^\frac{4}{q}, \,\, \mathbb{E}\Big[\|\overline{X}_\tau-\underline{X}_\tau\|_{L^q(0,T;\mathbb{L}^2)}^4\Big] \leq C\tau^\frac{4}{q}\,,\label{un_est_3}
}
\end{align}
{where $C$ does not depend on $\eps$, $h$, $\tau$ and $q\in[2,\infty]$.}
\end{lemma}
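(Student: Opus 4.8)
The plan is to derive each bound in Lemma~\ref{un_est} directly from the discrete energy estimate \eqref{disc_supener} in Lemma~\ref{lem_energy} together with the increment estimate in \eqref{un_est_3}-type computations, translating sums over time steps into integrals of the interpolants. The starting observation is that for the piecewise-constant interpolant $\overline{X}_\tau$ one has $\int_0^T \|\overline{X}_\tau(t)\|_{\mathbb{W}^{1,1}_0}\,\dd t = \tau\sum_{i=1}^N \|\nabla X^i\|_{\mathbb{L}^1}$, and similarly $\int_0^T \mathcal{J}_\varepsilon(X^i)$-type terms dominate $\tau\sum_i \|\nabla X^i\|_{\mathbb{L}^1}$ up to the $\varepsilon|\mathcal{O}|$ and $\lambda$-terms, since $\sqrt{|\nabla u|^2+\varepsilon^2}\ge |\nabla u|$ pointwise. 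Also $\|\overline{X}_\tau\|_{L^\infty(0,T;\mathbb{L}^2)} = \max_{i=1,\dots,N}\|X^i\|$. Hence \eqref{disc_supener} immediately yields $\mathbb{E}\big[(\tfrac12\max_i\|X^i\| + \tau\sum_i \mathcal{J}_\varepsilon(X^i))^2\big]\le C$, from which the first two displayed bounds for $\overline{X}_\tau$ follow by Cauchy–Schwarz/Jensen (a bound on $\mathbb{E}[(A+B)^2]$ with $A,B\ge0$ controls $\mathbb{E}[A^2]$, $\mathbb{E}[B^2]$, and $\mathbb{E}[B]\le (\mathbb{E}[B^2])^{1/2}$, handling the $L^1$-in-time, power-$2$-in-$\Omega$ statement). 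For $\underline{X}_\tau$ on $(\tau,T)$ one has $\int_\tau^T\|\underline{X}_\tau(t)\|_{\mathbb{W}^{1,1}_0}\,\dd t = \tau\sum_{i=1}^{N-1}\|\nabla X^i\|_{\mathbb{L}^1}$, a subsum of the same bound, and $\|\underline{X}_\tau\|_{L^\infty((0,T);\mathbb{L}^2)}=\max_{i=0,\dots,N-1}\|X^i\|\le \|x^0\|+\max_{i=1,\dots,N}\|X^i\|$, so the same estimate applies (with $\|x^0\|$ absorbed into $C$).

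For the piecewise-linear interpolant $X_\tau$, note $X_\tau(t)$ is a convex combination of $X^{i-1}$ and $X^i$ on $[t_{i-1},t_i]$, so $\|X_\tau(t)\|\le \max(\|X^{i-1}\|,\|X^i\|)\le \max_i\|X^i\|+\|x^0\|$ gives the $C([0,T];\mathbb{L}^2)^4$ bound (continuity of $X_\tau$ in $\mathbb{L}^2$ is automatic since it is affine between nodes and $\mathbb{V}_h$ is finite-dimensional). Likewise $\nabla X_\tau(t)$ is a convex combination of $\nabla X^{i-1}$ and $\nabla X^i$, so $\int_\tau^T\|\nabla X_\tau(t)\|_{\mathbb{L}^1}\,\dd t \le \tau\sum_{i=1}^{N}\tfrac12(\|\nabla X^{i-1}\|_{\mathbb{L}^1}+\|\nabla X^i\|_{\mathbb{L}^1})$, which on $(\tau,T)$ is again bounded by (a constant times) $\tau\sum_{i=1}^N\|\nabla X^i\|_{\mathbb{L}^1}$; combined with the $\mathbb{L}^1$-control of $\|X_\tau\|$ this gives the $L^1(\tau,T;\mathbb{W}^{1,1}_0)$ bound, to power $2$ in $\Omega$ as before.

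The bounds in \eqref{un_est_3} come from the explicit formulas. On $[t_{i-1},t_i]$, $\overline{X}_\tau(t)-X_\tau(t) = \frac{t_i-t}{\tau}(X^i-X^{i-1})$ and $\overline{X}_\tau(t)-\underline{X}_\tau(t)=X^i-X^{i-1}$, so $\|\overline{X}_\tau-X_\tau\|_{L^q(0,T;\mathbb{L}^2)}^q \le \sum_{i=1}^N \|X^i-X^{i-1}\|^q\int_{t_{i-1}}^{t_i}(\frac{t_i-t}{\tau})^q\,\dd t = \frac{\tau}{q+1}\sum_{i=1}^N\|X^i-X^{i-1}\|^q$ and $\|\overline{X}_\tau-\underline{X}_\tau\|_{L^q(0,T;\mathbb{L}^2)}^q = \tau\sum_{i=1}^N\|X^i-X^{i-1}\|^q$. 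Since $\sum_i\|X^i-X^{i-1}\|^2 \le 4\,(\tau\sum_i\mathcal{J}_\varepsilon(X^i)+\text{lower order}) \le 8\,a_*^N$ in the notation of Lemma~\ref{lem_energy} (so its square is $L^1(\Omega)$-bounded), and since $\|X^i-X^{i-1}\|^q\le (\max_j\|X^j-X^{j-1}\|^2)^{q/2-1}\cdot\|X^i-X^{i-1}\|^2$ for $q\ge2$, we get $\tau\sum_i\|X^i-X^{i-1}\|^q \le \tau^{q/2}\,(\sum_i\|X^i-X^{i-1}\|^2/\tau)^{?}$— more cleanly, $\tau\sum_i\|X^i-X^{i-1}\|^q\le \tau\,(\sum_i\|X^i-X^{i-1}\|^2)^{q/2}$ is false for $q<2$ but for $q\ge 2$ one has $\sum_i\|X^i-X^{i-1}\|^q\le (\sum_i\|X^i-X^{i-1}\|^2)^{q/2}$ by monotonicity of $\ell^p$ norms, so $\tau\sum_i\|X^i-X^{i-1}\|^q \le \tau\,(8a_*^N)^{q/2}$. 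Raising to the power $4/q$ and using $\tau = T/N$, $\mathbb{E}[(a_*^N)^2]\le C$, and $4/q \le 2$ so $\mathbb{E}[(a_*^N)^{2}]$ controls $\mathbb{E}[(a_*^N)^{2\cdot(4/q)/2\cdot(q/2)}] = \mathbb{E}[(a_*^N)^{2}]$ when $q=2$ and lower moments otherwise, yields $\mathbb{E}[\|\overline{X}_\tau-X_\tau\|_{L^q}^4]\le C\tau^{4/q}$ and likewise for $\overline{X}_\tau-\underline{X}_\tau$; the constant depends only on $T$ and the data, not on $\varepsilon,h,\tau$. The main subtlety — the only place requiring genuine care rather than bookkeeping — is keeping the exponents straight in this last step and verifying the moment passage from the power-$2$ bound on $a_*^N$ to the power-$4/q$ statement (using Jensen, since $4/q\le 2$); everything else is a direct rewriting of Lemma~\ref{lem_energy}.
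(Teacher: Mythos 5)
Your proposal is correct and follows exactly the route the paper intends: the paper offers no written proof, stating only that the lemma is a direct consequence of Lemma~\ref{lem_energy}, and your argument supplies precisely those omitted details (translating the discrete sums into integrals of the interpolants, using $\sqrt{|\nabla u|^2+\eps^2}\ge|\nabla u|$, the convex-combination structure of $X_\tau$, the explicit formulas for $\overline{X}_\tau-X_\tau$ and $\overline{X}_\tau-\underline{X}_\tau$, and the $\ell^2\hookrightarrow\ell^q$ inequality for $q\ge 2$). The only cosmetic blemishes are the muddled aside about Jensen in the last step --- the exponent $(q/2)\cdot(4/q)=2$ lands exactly on the second moment of $a_*^N$ controlled by \eqref{disc_supener}, so no moment interpolation is actually needed --- and the indexing of the gradient sum for $X_\tau$ on $(\tau,T)$, which should start at $i=2$ so that $\nabla X^0=\nabla P_h x^0$ (not controlled by the energy estimate) never enters; neither affects correctness.
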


Furthermore, by Lemma~\ref{lem_incr} the following time-fractional bounds hold for the piecewise linear interpolant. 

\begin{lemma}\label{mod_cont_m1}
Let $\mathbf m$ denote the modulus of continuity of $\Bbb H^{-1}$-valued functions on $[0,T]$
$$
\mathbf m(f,\delta):=\sup\,\{\|f(t)-f(s)\|_{\Bbb H^{-1}}:\,s,t\in[0,T],\,|t-s|\le\delta\}.
$$
Then the following estimate holds for $\alpha\in (0,\frac{1}{2})$ and $s\in (0,\frac{1}{4})$
\begin{align*}
\E{\|X_\tau\|_{W^{\alpha, 4}(0,T;\Bbb H^{-1})}^4}&\leq C,&\E{\sup_{\delta>0}\,\{\delta^{-s}\mathbf{m}(X_\tau,\delta)\}}&\leq C,&\delta>0\,,
\\
\E{\sup_{\delta>0}\,\{(\delta+\tau)^{-s}\mathbf{m}(\overline{X}_\tau,\delta)\}}&\leq C,&\E{\sup_{\delta>0}\,\{(\delta+\tau)^{-s}\mathbf{m}(\underline{X}_\tau,\delta)\}}&\leq C,&\delta>0\,,
\end{align*}
where $C$ does not depend on $\eps$, $h$, $\tau$.
\end{lemma}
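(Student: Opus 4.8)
The plan is to derive all four estimates from Lemma \ref{lem_incr} together with the a priori bound of Lemma \ref{lem_energy}, exploiting the fact that the piecewise linear interpolant $X_\tau$ interpolates exactly the nodal values $X^i$ and that the piecewise constant interpolants $\overline X_\tau$, $\underline X_\tau$ differ from $X_\tau$ by at most one grid cell. First I would record the Gagliardo--Sobolev--Slobodetski characterisation of the fractional norm, namely
$$
\|f\|_{W^{\alpha,4}(0,T;\Bbb H^{-1})}^4=\|f\|_{L^4(0,T;\Bbb H^{-1})}^4+\int_0^T\int_0^T\frac{\|f(t)-f(s)\|_{\Bbb H^{-1}}^4}{|t-s|^{1+4\alpha}}\,\dd t\,\dd s\,,
$$
and observe that the first summand is controlled by $\mathbb E[\|X_\tau\|_{L^\infty(0,T;\Bbb L^2)}^4]\le C$ via the embedding $\Bbb L^2\hookrightarrow\Bbb H^{-1}$ and Lemma \ref{lem_energy} (or \ref{un_est}). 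For the double integral I would take expectations, apply Fubini, and for fixed $s<t$ dominate $\mathbb E[\|X_\tau(t)-X_\tau(s)\|_{\Bbb H^{-1}}^4]$ by $C|t-s|^2$; this follows from Lemma \ref{lem_incr} on each pair of nodes together with convexity of $x\mapsto x^4$ to pass from the linear interpolant to its endpoints, noting that $X_\tau(t)-X_\tau(s)$ is, for general $s,t$, a convex combination of differences $X^{m}-X^{n}$ with $t_n-t_m\le t-s+2\tau$, and that the "$+2\tau$" is harmless once one also uses the trivial Lipschitz-in-time bound coming from $\|X^i-X^{i-1}\|_{\Bbb H^{-1}}\le C\|X^i-X^{i-1}\|$ inside one cell. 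Then $\int_0^T\int_0^T|t-s|^{2}/|t-s|^{1+4\alpha}\,\dd t\,\dd s<\infty$ precisely because $\alpha<1/2$, giving the first estimate.

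Next I would treat the modulus-of-continuity estimates, which are of Kolmogorov--Chentsov type but must be made uniform in $\tau$. The cleanest route is to invoke the classical quantitative Kolmogorov continuity bound: for an $\Bbb H^{-1}$-valued process $Y$ with $\mathbb E[\|Y(t)-Y(s)\|_{\Bbb H^{-1}}^4]\le C|t-s|^2$ for all $s,t$, one has, for every $s<1/4$,
$$
\mathbb E\Big[\sup_{0<\delta}\big\{\delta^{-s}\,\mathbf m(Y,\delta)\big\}\Big]\le C_{s}
$$
with $C_s$ depending only on the constant $C$ and on $T,s$ (this is, e.g., the standard dyadic chaining argument producing a random Hölder constant with finite fourth moment, since $4s<1$). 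Applying this with $Y=X_\tau$ and the increment bound just established yields the claimed estimate for $X_\tau$ with a constant independent of $\tau$. For $\overline X_\tau$ and $\underline X_\tau$ I would note $\overline X_\tau(t)=X_\tau(t_i)$ and $\underline X_\tau(t)=X_\tau(t_{i-1})$ on $(t_{i-1},t_i)$, so that for any $s,t$ with $|t-s|\le\delta$ the corresponding node indices differ by at most $\delta/\tau+1$ cells, whence $\|\overline X_\tau(t)-\overline X_\tau(s)\|_{\Bbb H^{-1}}\le\mathbf m(X_\tau,\delta+\tau)$ pointwise, and likewise for $\underline X_\tau$; therefore $\mathbf m(\overline X_\tau,\delta)\le\mathbf m(X_\tau,\delta+\tau)$ and the bound $\mathbb E[\sup_\delta(\delta+\tau)^{-s}\mathbf m(\overline X_\tau,\delta)]\le\mathbb E[\sup_{\delta'}(\delta')^{-s}\mathbf m(X_\tau,\delta')]\le C$ follows by the substitution $\delta'=\delta+\tau\ge\delta$.

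I expect the main obstacle to be the bookkeeping needed to show $\mathbb E[\|X_\tau(t)-X_\tau(s)\|_{\Bbb H^{-1}}^4]\le C|t-s|^2$ \emph{for arbitrary} $s,t\in[0,T]$ (not just for pairs of grid points), with a constant independent of $\tau$: Lemma \ref{lem_incr} only directly controls node-to-node increments $X^{n+\ell}-X^{n}$ in terms of $t_\ell^2$, and one must handle the two partial cells at the ends of the interval $[s,t]$ without generating a spurious factor that blows up as $\tau\to 0$. The resolution is that within a single cell $X_\tau$ is linear with slope $(X^i-X^{i-1})/\tau$, and $\mathbb E[\|X^i-X^{i-1}\|_{\Bbb H^{-1}}^4]\le\mathbb E[\|X^i-X^{i-1}\|^4]$, which is $O(\tau^2)$ by Lemma \ref{lem_energy}; combining the node estimate on the "interior" part with these cell estimates, and using $|t-s|\ge$ (number of interior cells)$\cdot\tau$ when there is at least one interior cell, while handling $|t-s|<2\tau$ separately by the cell bound, produces the desired $C|t-s|^2$ uniformly. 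Once that increment estimate is in hand, both the fractional Sobolev bound and the three modulus-of-continuity bounds are routine consequences of the deterministic embedding $W^{\alpha,4}(0,T;\Bbb H^{-1})\hookrightarrow C^{0,s}([0,T];\Bbb H^{-1})$ for $s<\alpha-1/4$ and of the Kolmogorov chaining argument, respectively.
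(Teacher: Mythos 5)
Your proposal is correct in substance and rests on the same two pillars as the paper's proof: the node-to-node increment bound of Lemma \ref{lem_incr} and the reduction $\mathbf m(\overline X_\tau,\delta)\le\mathbf m(X_\tau,\delta+\tau)$ (which is exactly the inequality the paper's one-line proof records). The one genuine difference is the device used to convert discrete increment estimates into a fractional Sobolev bound: the paper applies the deterministic Lemma \ref{besov_discr}, which bounds the Besov seminorm $[f]_{B^s_{p,q}}$ of a piecewise linear function directly in terms of its nodal increments $f_{i,p}$, so that only the grid-point moments $\E{\|X^{n+\ell}-X^{n}\|_{\Bbb H^{-1}}^4}\le Ct_\ell^2$ are ever needed; you instead extend the increment bound to arbitrary times $s,t\in[0,T]$ and integrate the Gagliardo double integral via Fubini. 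Both work; your route is more self-contained at the price of the end-cell bookkeeping you correctly identify as the main obstacle. The H\"older-modulus bounds then follow either from the embedding $W^{\alpha,4}(0,T;\Bbb H^{-1})\hookrightarrow C^{0,s}$ for $s<\alpha-\frac14$ (the paper's route, which you also mention) or from the Kolmogorov--Garsia chaining bound you invoke; these are essentially equivalent.

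One justification in your bookkeeping is wrong and should be repaired, although the fix is immediate. You claim that $\E{\|X^i-X^{i-1}\|^4}=O(\tau^2)$ ``by Lemma \ref{lem_energy}''. Lemma \ref{lem_energy} only gives $\E{\bigl(\sum_i\|X^i-X^{i-1}\|^2\bigr)^2}\le C$, hence $\E{\|X^i-X^{i-1}\|^4}\le C$ for each fixed $i$, not $O(\tau^2)$: a single $\Bbb L^2$-increment need not be of order $\sqrt\tau$ in $L^4(\Omega)$ uniformly in $i$. What your single-cell argument actually needs is $\E{\|X^i-X^{i-1}\|_{\Bbb H^{-1}}^4}\le C\tau^2$, and this is precisely Lemma \ref{lem_incr} with $\ell=1$; with that citation the in-cell estimate $\E{\|X_\tau(t)-X_\tau(s)\|_{\Bbb H^{-1}}^4}=\frac{(t-s)^4}{\tau^4}\,\E{\|X^i-X^{i-1}\|_{\Bbb H^{-1}}^4}\le C\frac{(t-s)^4}{\tau^2}\le C(t-s)^2$ for $|t-s|\le\tau$ goes through, and the rest of your argument is sound.
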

\begin{proof}
Use Lemma \ref{lem_incr}, Lemma \ref{besov_discr} and the inequality
\begin{equation*}
\max\,\{{\mathbf m}(\overline{X}_\tau,\delta),{\mathbf m}(\underline{X}_\tau,\delta)\}\le{\mathbf m}(X,\delta+\tau),\qquad \delta>0.
\end{equation*}
\end{proof}

{With the notation and the parameters from Lemma \ref{mod_cont_m1}, for $R>0$ and $a\in[0,T]$, writing shortly $Q_n$ for $Q_n([0,T];\Bbb L^2_w)$ and $C$ for $C([0,T];\Bbb L^2_w)$, we consider the sets
\begin{align*}
V_{R,n,a}&=\{f\in Q_n:\,&\sup_{t\in[0,T]}\|f(t)\|\le R,\,&\sup_{\delta>0}\frac{\mathbf m(f,\delta)}{(\delta+T/n)^s}\le R,\,&\int_a^T\|f(s)\|_{BV(\mathcal O)}\,\dd s\le R\}\,,
\\
V_{R,\infty,a}&=\{f\in C:&\sup_{t\in[0,T]}\|f(t)\|\le R,&\sup_{\delta>0}\frac{\mathbf m(f,\delta)}{\delta^s}\le R,&\int_a^T\|f(s)\|_{BV(\mathcal O)}\,\dd s\le R\}\,,
\end{align*}
$$
V^m_{R,b}=V_{R,\infty,b_*}\cup\bigcup_{n\in[m,\infty]}V_{R,n,b_n},\qquad b_*:=\limsup_{n\to\infty} b_n.
$$

\begin{proposition}\label{tight_part} The random variables $\overline{X}_\tau$, $\underline{X}_\tau$, $X_\tau$ are Borel measurable as mappings from $(\Omega_\tau,\mathscr F_\tau,\Bbb P_\tau)$ to $Q_c([0,T];\Bbb L^2_w)$, for every $m,n\in\Bbb N$ and $a\in[0,T]^{\Bbb N\cup\{\infty\}}$, the sets $V^m_{R,a}$ and $V_{R,n,a_n}$ are compact in $Q_c([0,T];\Bbb L^2_w)$, the sets $V_{R,\infty,a_\infty}$ are compact in $C([0,T];\Bbb L^2_w)$, and
$$
\Bbb P_\tau\,[\overline{X}_\tau\notin V_{R,N,0}]\le\frac{C}{R},\qquad\Bbb P_\tau\,[\underline{X}_\tau\notin V_{R,N,T/N}]\le\frac{C}{R},\qquad\Bbb P_\tau\,[X_\tau \notin V_{R,\infty,T/N}]\le\frac{C}{R}\,,
$$
holds for every $R>0$ where $C$ does not depend on $\eps$, $h$, $\tau$ and $R$. In particular, the laws
$$
\Bbb P_\tau\,[\overline{X}_{\eps,h,\tau}\in\cdot\,],\qquad\Bbb P_\tau\,[\underline{X}_{\eps,h,\tau}\in\cdot\,]\qquad\Bbb P_\tau\,[X_{\eps,h,\tau}\in\cdot\,]\,,
$$
are tight on $Q_{c,BV}([0,T];\Bbb L^2_w)$, $Q_c([0,T];\Bbb L^2_w)$ and $C([0,T];\Bbb L^2_w)$ resp.  with respect to $\eps$, $h$, $\tau$.
\end{proposition}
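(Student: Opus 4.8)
The plan is to verify the four assertions in order: (a) Borel measurability of the interpolants into $Q_c([0,T];\Bbb L^2_w)$; (b) compactness of the sets $V^m_{R,a}$, $V_{R,n,a_n}$ in $Q_c([0,T];\Bbb L^2_w)$ and of $V_{R,\infty,a_\infty}$ in $C([0,T];\Bbb L^2_w)$; (c) the three probability bounds of the form $\Bbb P_\tau[\,\cdot\notin V\,]\le C/R$; and (d) the conclusion on tightness. Throughout, $Y=\Bbb L^2_w$, whose compact sets are metrizable (bounded closed balls, by Remark~\ref{rem_qps}), and on which the relevant functionals are lower semicontinuous.

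For (a): each interpolant takes values in a fixed finite-dimensional space $\Bbb V_h\subset\Bbb L^2$ for fixed $h$, and $\overline X_\tau,\underline X_\tau\in Q_N([0,T];\Bbb L^2)\subset Q_\infty$, while $X_\tau\in C([0,T];\Bbb L^2)$; in all cases the range lies in $Q_c([0,T];\Bbb L^2_w)$. Pointwise measurability holds since $\omega\mapsto X^i(\omega)$ is $\mathscr F_\tau$-measurable with values in $\Bbb V_h$, hence the evaluation maps $\pi_s$ are measurable. By Lemma~\ref{lem_energy}, the range is ($\sigma$-)bounded in sup-norm, hence $\sigma$-compact in $Q_c([0,T];\Bbb L^2_w)$ (using Example~\ref{pc_comp} for $X_\tau$ and Theorem~\ref{comp_char} for the piecewise-constant ones, noting step functions on the single partition with values in a fixed ball form a compact set); then Corollary~\ref{cor_2_me} upgrades pointwise measurability on this $\sigma$-compact range to Borel measurability in $Q([0,T];\Bbb L^2_w)$.

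For (b): this is the core Arzelà–Ascoli argument, and is where the bulk of the work lies. I would check condition (i) of Theorem~\ref{comp_char} — pointwise relative compactness in $\Bbb L^2_w$ — which is immediate since $\sup_t\|f(t)\|\le R$ confines the values to a fixed (weakly compact, metrizable) ball. Condition (ii), equi-(weak-)oscillation, follows from the modulus bound $\mathbf m(f,\delta)\le R(\delta+T/n)^s$ (resp.\ $R\delta^s$): given a weak neighbourhood $O$ of $0$, determined by finitely many $\varphi_j\in\Bbb L^2$ and a tolerance, one picks $m$ large so that $R(T/m)^s$ beats the tolerance and then $\delta$ small, using that the $\Bbb H^{-1}$-modulus controls pairings against $\Bbb H^1_0$ functions and a density/truncation argument passes from $\Bbb H^1_0$ test functions to general $\Bbb L^2$ test functions together with the uniform $\Bbb L^2$-bound $R$. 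One also needs closedness of $V^m_{R,a}$ inside $Q_c([0,T];\Bbb L^2_w)$: the sup-norm bound, the modulus bound, and the integral BV-constraint $\int_a^T\|f(s)\|_{BV(\mathcal O)}\,\dd s\le R$ are all preserved under uniform $\Bbb L^2_w$-limits — the first two by weak lower semicontinuity of $\|\cdot\|$ and of $\mathbf m$, the third by Fatou combined with weak lower semicontinuity of $\|\cdot\|_{BV(\mathcal O)}$ (cf.\ the functional $\mathcal J$), here is the delicate point since one must also ensure the limit still lies in $Q_c$ and not merely $Q$; but the defining partitions are nested/equidistant so a uniform limit of functions each constant on the $n$-partition stays in $Q_n$, and a uniform limit of continuous functions is continuous, so $V^m_{R,a}$ is a closed subset of $Q_c$, hence compact by Theorem~\ref{comp_char}(iii). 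The same reasoning with $n=\infty$ gives compactness of $V_{R,\infty,a_\infty}$ in $C([0,T];\Bbb L^2_w)$.

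For (c): each bound is a Chebyshev/Markov estimate. Writing $V_{R,N,0}^{\,c}\subseteq\{\sup_t\|\overline X_\tau(t)\|>R\}\cup\{\sup_\delta\mathbf m(\overline X_\tau,\delta)/(\delta+T/N)^s>R\}\cup\{\int_0^T\|\overline X_\tau(s)\|_{BV(\mathcal O)}\,\dd s>R\}$, apply Markov to each piece: the first is controlled by $\Bbb E[\sup_t\|\overline X_\tau(t)\|]^{\,}\le C$ (Lemma~\ref{un_est}), the second by $\Bbb E[\sup_\delta (\delta+\tau)^{-s}\mathbf m(\overline X_\tau,\delta)]\le C$ (Lemma~\ref{mod_cont_m1}), the third by $\Bbb E\|\overline X_\tau\|_{L^1(0,T;\mathbb W^{1,1}_0)}\le C$ plus the boundary term in $\mathcal J$, which together bound $\Bbb E\int_0^T\|\overline X_\tau\|_{BV(\mathcal O)}\,\dd s$ (again Lemma~\ref{un_est}/Lemma~\ref{lem_energy}); summing gives $\le C/R$ with $C$ independent of $\eps,h,\tau,R$. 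The cases of $\underline X_\tau$ (with $a=T/N$, the reason the lower interpolant loses the first subinterval in the $\mathbb W^{1,1}$-bound) and of $X_\tau$ (in $C([0,T];\Bbb L^2_w)$, with $a=T/N$) are identical.

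For (d): given $r>0$, choose $R=C/r$; then $V_{R,N,0}\subset Q_{c,BV}([0,T];\Bbb L^2_w)$ is a compact, metrizable set with $\Bbb P_\tau[\overline X_{\eps,h,\tau}\in V_{R,N,0}]\ge 1-r$ uniformly in $(\eps,h,\tau)$, and likewise for $\underline X_{\eps,h,\tau}$ and $X_{\eps,h,\tau}$; this is exactly the hypothesis of the Prokhorov theorem (Theorem~\ref{prokhorov}), so the families of laws are (uniformly) tight on the respective spaces. The main obstacle is step (b) — more precisely, verifying the equi-weak-oscillation condition (ii) of Theorem~\ref{comp_char} from the $\Bbb H^{-1}$-modulus bound together with the uniform $\Bbb L^2$-bound, and checking that the BV integral constraint is stable under uniform $\Bbb L^2_w$-convergence while keeping the limit inside $Q_c$ rather than $Q$.
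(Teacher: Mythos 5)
Your proposal is correct and follows essentially the same route as the paper: $\mathcal Y_T$-measurability plus a $\sigma$-compact, metrizable range and Corollary~\ref{cor_2_me} for Borel measurability; Theorem~\ref{comp_char} for compactness, with the equi-oscillation condition obtained from the $\Bbb H^{-1}$-modulus bound via the coincidence of the weak and $\Bbb H^{-1}$ topologies on $\Bbb L^2$-bounded sets, and closedness of the BV constraint from lower semicontinuity of $f\mapsto\int\|f(s)\|_{BV(\mathcal O)}\,\dd s$ on $Q([0,T];\Bbb L^2_w)$ (the paper realizes this as a supremum of continuous functionals over a countable test family, which is equivalent to your Fatou argument); and Markov's inequality with Lemmas~\ref{un_est} and~\ref{mod_cont_m1} followed by Prokhorov. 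The only cosmetic difference is that you spell out several steps the paper leaves implicit.
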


\begin{proof} $\overline{X}_\tau$, $\underline{X}_\tau$ and $X_\tau$ are clearly $\mathcal Y_T$-measurable and $Q_N([0,T];\Bbb L^2_w)$ and $C_N([0,T];\Bbb L^2_w)$ are $\sigma$-compact in $C([0,T];\Bbb L^2_w)$ by Theorem \ref{comp_char} and Example \ref{pc_comp}. Hence $\overline{X}_\tau$, $\underline{X}_\tau$ and $X_\tau$ are Borel measurable by Corollary \ref{cor_2_me} as compact sets in $Q_c([0,T];\Bbb L^2_w)$ and $C([0,T];\Bbb L^2_w)$ are metrizable (Remark \ref{rem_qps}). Now the sets $V_{R,n,a_n}$ and $V_{R,a}$ are closed and relatively compact in $Q_c([0,T];\Bbb L^2_w)$ and the sets $V_{R,\infty,a_\infty}$ are closed and relatively compact in $C([0,T];\Bbb L^2_w)$ by Theorem \ref{comp_char}, as the weak topology and the $\Bbb H^{-1}$-topology coincide on bounded sets in $\Bbb L^2$. In the proof of closedness of the above sets, we use the fact that there exists a countable set $\mathcal H$ of smooth compactly supported functions such that
$$
\|g\|_{BV(\mathcal O)}=\sup\,\{(g,\varphi):\,\varphi\in\mathcal H\},\quad \text{ for } g\in L^1_{loc}(\mathcal O)\,,
$$
holds, e.g., by \cite[Proposition 3.6]{AFP} and by separability of $C^\infty_c(\mathcal O)$. Hence
$$
f\mapsto\int_0^T\|f(s)\|_{BV(\mathcal O)}\,\dd s,
$$
as a supremum of continuous functions, is lower semicontinuous on $Q([0,T];\Bbb L^2_w)$.

The tightness then follows directly from Lemma \ref{un_est} and Lemma \ref{mod_cont_m1}.
\end{proof}

In the next lemma we obtain the convergence of the noise variables to a Wiener process.

\begin{lemma}\label{donsker} Let $W^j_\tau$, $1\le j\le N$ be the piecewise linear processes on $[0,T]$ defined by
$$
W^j_\tau(t_i)=\sum_{\ell=1}^i\xi^{\ell,j}_\tau,\qquad 0\le i\le N\,,
$$
and $W^j_\tau$ is linear on $[t_{i-1},t_i]$ for every $0<i\le N$ where $\tau=T/N$ and $t_i=i\tau$. We also define $W^j_\tau=0$ for $j>N$. Then the laws of $W^j_\tau$ converge to the Wiener measure on $C\,[0,T]$ as $\tau\to 0$, for every $j\in\Bbb N$.
\end{lemma}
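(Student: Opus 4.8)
The plan is to read this as Donsker's invariance principle for a triangular array and to verify the two standard ingredients — convergence of finite-dimensional distributions and tightness in $C[0,T]$ — by hand, since the array is not i.i.d.\ (the law of each $\xi^{\ell,j}_\tau$ is allowed to depend on $N=T/\tau$). Fix $j\in\mathbb N$ and consider only $\tau=T/N$ with $N\ge j$, so that $W^j_\tau$ is the piecewise linear interpolant of the partial sums $S^i:=\sum_{\ell=1}^i\xi^{\ell,j}_\tau$, $0\le i\le N$. For this fixed $j$ the variables $\xi^{1,j}_\tau,\dots,\xi^{N,j}_\tau$ are independent, mean zero, with $\mathbb E\,|\xi^{\ell,j}_\tau|^2=\tau$ and $\mathbb E\,|\xi^{\ell,j}_\tau|^4\le C\tau^2$. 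It suffices to show that the laws of $W^j_\tau$ on $C[0,T]$ are tight and that all finite-dimensional distributions converge to those of a standard Brownian motion; Prokhorov's theorem then forces every weak limit point to be the Wiener measure, which gives the claim.

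For the finite-dimensional distributions, given $0=s_0<s_1<\dots<s_p\le T$ I would replace each $s_r$ by the nearest grid point $t_{i(r)}=i(r)\tau$; since $|W^j_\tau(s_r)-S^{i(r)}|\le|\xi^{i(r)+1,j}_\tau|$ (or the analogous single increment), the difference tends to $0$ in $L^2(\Omega)$ as $\tau\to0$, so it is enough to treat $(S^{i(1)},\dots,S^{i(p)})$. Its increments $S^{i(r)}-S^{i(r-1)}$ over the disjoint blocks of indices are independent, each is a sum of $m_r(\tau)\sim(s_r-s_{r-1})N/T$ mean-zero terms whose variances sum to $m_r(\tau)\tau\to s_r-s_{r-1}$, and the Lyapunov condition holds because $\sum_{\ell=1}^N\mathbb E\,|\xi^{\ell,j}_\tau|^4\le NC\tau^2=CT\tau\to0$. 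Hence the Lindeberg--Feller central limit theorem yields $S^{i(r)}-S^{i(r-1)}\Rightarrow\mathcal N(0,s_r-s_{r-1})$, and by independence across blocks the joint law of $(W^j_\tau(s_1),\dots,W^j_\tau(s_p))$ converges to that of $(\beta(s_1),\dots,\beta(s_p))$ for a standard Brownian motion $\beta$.

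For tightness in $C[0,T]$ I would establish the increment bound $\mathbb E\,|W^j_\tau(t)-W^j_\tau(s)|^4\le \tilde C|t-s|^2$ for all $s,t\in[0,T]$, with $\tilde C$ independent of $\tau$: expanding the fourth moment of a sum of $m$ independent mean-zero variables leaves only the pure fourth-power terms and the paired square terms, giving a bound $mC\tau^2+3(m\tau)^2\le \tilde C(m\tau)^2$ since $m\ge1$; for $|t-s|<\tau$ one stays inside a single interpolation interval and uses $\mathbb E\,|\xi^{\ell,j}_\tau|^4\le C\tau^2$ together with $|t-s|/\tau\le1$ directly, and the general case reduces to these by writing the interpolation value at a non-grid point as $S^{i-1}+\theta\xi^{i,j}_\tau$, $\theta\in[0,1]$. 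Combined with $W^j_\tau(0)=0$, Kolmogorov's tightness criterion with exponents $4$ and $1+1$ gives tightness of $\{\mathrm{Law}(W^j_\tau)\}_{\tau}$ on $C[0,T]$, completing the argument. The only points requiring care — and the mild obstacle — are not invoking the i.i.d.\ Donsker theorem but verifying Lindeberg/Lyapunov for the $\tau$-dependent array, and the bookkeeping for non-grid times and the boundary indices around each $s_r$; these boundary contributions are harmless because a single $\xi^{\ell,j}_\tau$ is $O(\sqrt\tau)$ in $L^2(\Omega)$.
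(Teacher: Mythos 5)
Your proposal is correct and follows the same two-pronged strategy as the paper: a uniform fourth-moment bound on increments for tightness, and a triangular-array CLT (Lindeberg--Feller) for convergence of finite-dimensional distributions, which the paper delegates to \cite[Theorem 18.2]{billingsley}. The only genuine difference is in how tightness is extracted from the moment bound: you extend the estimate $\mathbb{E}\,|W^j_\tau(t)-W^j_\tau(s)|^4\le \tilde C|t-s|^2$ from grid points to arbitrary $s,t$ (correctly, using the linear interpolation and the three-piece splitting) and invoke Kolmogorov's tightness criterion, whereas the paper feeds the grid-point bound into the discrete Besov estimate of Lemma \ref{besov_discr} to get $\mathbb{E}\,\|W^j_\tau\|^4_{B^s_{4,4}(0,T)}\le C$ and then uses the compact embedding $B^s_{4,4}(0,T)\hookrightarrow\hookrightarrow C^\alpha([0,T])$. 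Both routes are valid and of comparable effort here; the paper's choice is motivated by the fact that the Besov bound \eqref{donsker_besov} is reused later (e.g.\ in \eqref{est_modu} to control the modulus of continuity of $W^j_\tau$ in the proof of Theorem \ref{main_full_1}), so your Kolmogorov argument, while perfectly adequate for the lemma itself, would not supply that by-product.
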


\begin{proof}
Let $s\in(1/4,1/2)$. Then, 
$$
\E{\left|W^j_\tau(t_n)-W^j_\tau(t_{n-\ell})\right|^4}\le C_\kappa t_{\ell}^2,\qquad 1\le\ell\le n\le N\,,
$$
hence, by Lemma \ref{besov_discr} we get
\begin{equation}\label{donsker_besov}
\E{\,\|W^j_\tau\|^4_{B^s_{4,4}(0,T)}}\le C_{\kappa,s,T}.
\end{equation}
In particular, since $B^s_{4,4}(0,T)$ is embedded compactly in $C^\alpha([0,T])$ for every $0<\alpha<s-\frac{1}{4}$ e.g. by \cite[Corrolary 26]{simon}, the laws of 
$\{W^j_\tau\}$ are tight on $\mathscr B(C([0,T]))$. 
Since $(W^j_\tau(s_0),\dots,W^j_\tau(s_k))$ converge in law to the law of $(W(s_0),\dots,W(s_k))$ where $W$ is a Wiener process, e.g. by Theorem 18.2 in \cite{billingsley}, we get the claim.
\end{proof}
}

{
Let us consider the completely regular space with metrizable compacts (see Remark \ref{rem_qps})
$$
\mathbf Z=Q_{c,BV}([0,T];\Bbb L^2_w)\times Q_c([0,T];\Bbb L^2_w)\times C([0,T];\Bbb L^2_w)\times C([0,T])\times C([0,T])\times C([0,T])\times\dots,
$$
define the projections
\begin{align}\label{proj}
\nonumber
&S^1:\mathbf Z\to Q_{c,BV}([0,T];\Bbb L^2_w)&(f^1,f^2,f^3,w^1,w^2,w^3,\dots)\mapsto f^1,
\\ \nonumber
&S^2:\mathbf Z\to Q_c([0,T];\Bbb L^2_w)&(f^1,f^2,f^3,w^1,w^2,w^3,\dots)\mapsto f^2,
\\ 
&S^3:\mathbf Z\to C([0,T];\Bbb L^2_w)&(f^1,f^2,f^3,w^1,w^2,w^3,\dots)\mapsto f^3,
\\ \nonumber
&W^j:\mathbf Z\to C([0,T])&(f^1,f^2,f^3,w^1,w^2,w^3,\dots)\mapsto w^j,
\end{align}
and the canonical filtration on $\mathbf Z$
$$
\mathcal Z_t=\sigma(S^1_s,S^2_s,S^3_s,W^j_s:\,s\in[0,t],\,j\in\Bbb N),\qquad t\in[0,T].
$$
If $\nu$ is a probability measure on $\mathscr B(\mathbf Z)$ then $\mathcal Z^\nu_t$ stands for the augmentation of $\mathcal Z_t$ by $\nu$-negligible Borel sets.

\begin{corollary}\label{main_cor} The random variables 
$$
Z_{\eps,h,\tau}=(\overline{X}_{\eps,h,\tau},\underline{X}_{\eps,h,\tau},X_{\eps,h,\tau},W^1_\tau,W^2_\tau,W^3_\tau,\dots)
$$
are Borel measurable as mappings from $(\Omega_\tau,\mathscr F_\tau,\Bbb P_\tau)$ to $\mathbf Z$ and their laws under $\Bbb P_\tau$ are tight on $\mathscr B(\mathbf Z)$ with respect to $\eps$, $h$, $\tau$. In particular, every sequence $(\varepsilon_n,h_n,\tau_n)$ has a subsequence $(\varepsilon_{n_k},h_{n_k},\tau_{n_k})$ such that laws of $Z_{\eps_{n_k},h_{n_k},\tau_{n_k}}$ under $\Bbb P_{\tau_{n_k}}$ converge to a Radon probability measure $\mu$ on $\mathscr B(\mathbf Z)$.
\end{corollary}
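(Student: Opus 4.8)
The plan is to assemble the statement from component-wise facts already at hand — Proposition~\ref{tight_part} for the three interpolants, the a priori estimate \eqref{donsker_besov} from the proof of Lemma~\ref{donsker} for the discretized Wiener processes, and the abstract tools of Section~\ref{SLCS}, namely Corollary~\ref{cor_2_me}, Remark~\ref{rem_qps} and the Prokhorov theorem (Theorem~\ref{prokhorov}).

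For measurability I would argue exactly as in the proof of Proposition~\ref{tight_part}, but on the product. Each coordinate of $Z_{\eps,h,\tau}$ is a Borel random variable ($\overline X_\tau,\underline X_\tau,X_\tau$ by Proposition~\ref{tight_part}, and $W^j_\tau$ trivially, being a continuous piecewise linear process formed from the $\xi^{\ell,j}_\tau$); hence every evaluation of a coordinate at a rational time is an $\Bbb L^2$- or real-valued random variable, so $Z_{\eps,h,\tau}$ is measurable for the $\sigma$-algebra on $\mathbf Z$ generated by these evaluations. The range of $Z_{\eps,h,\tau}$ lies in a $\sigma$-compact subset of $\mathbf Z$ — the product of the $\sigma$-compact sets of step-functions, of piecewise linear continuous $\Bbb L^2$-valued functions, and of piecewise linear real functions on the mesh-$\tau$ partition; cf. Example~\ref{pc_comp} and the proof of Proposition~\ref{tight_part} — and any compact set of the countable product $\mathbf Z$ is contained in the product of its coordinate projections, each metrizable by Remark~\ref{rem_qps}. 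On such a $\sigma$-compact set Corollary~\ref{cor_2_me}, applied coordinatewise, identifies the trace of $\mathscr B(\mathbf Z)$ with the trace of the evaluation $\sigma$-algebra, which upgrades the coordinate-wise measurability of $Z_{\eps,h,\tau}$ to Borel measurability into $\mathbf Z$.

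For tightness, Proposition~\ref{tight_part} provides, for every $R>0$, metrizable compacts independent of $(\eps,h,\tau)$ — concretely $\mathcal K^1_R=V^1_{R,(0,0,\dots)}\subseteq Q_{c,BV}([0,T];\Bbb L^2_w)$, $\mathcal K^2_R=V^1_{R,(T/n)_n}\subseteq Q_c([0,T];\Bbb L^2_w)$ and $\mathcal K^3_R=V_{R,\infty,T}\subseteq C([0,T];\Bbb L^2_w)$, which respectively contain $V_{R,N,0}$, $V_{R,N,T/N}$ and $V_{R,\infty,T/N}$ for every $N$ — such that $\Bbb P_\tau[\overline X_\tau\notin\mathcal K^1_R]\le C/R$ and analogously for the other two. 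For the noise, \eqref{donsker_besov} and Chebyshev's inequality give $\Bbb P_\tau[\|W^j_\tau\|_{B^s_{4,4}(0,T)}>R]\le C/R^4$ uniformly in $\tau$ and $j$ (trivially so once $j>N$, since then $W^j_\tau\equiv 0$), and closed balls of $B^s_{4,4}(0,T)$ are compact and metrizable in $C([0,T])$ by the compact embedding invoked in the proof of Lemma~\ref{donsker}. Putting, for $m\in\Bbb N$,
$$
K_m=\mathcal K^1_{Cm}\times\mathcal K^2_{Cm}\times\mathcal K^3_{Cm}\times\prod_{j=1}^\infty\bigl\{w\in C([0,T]):\|w\|_{B^s_{4,4}(0,T)}\le(C\,2^{j}m)^{1/4}\bigr\}\,,
$$
one obtains by Tychonoff's theorem a compact set which, being a countable product of metrizable compacts, is metrizable, and a union bound gives $\Bbb P_\tau[Z_{\eps,h,\tau}\notin K_m]\le 4/m$ uniformly in $(\eps,h,\tau)$; hence $\sup_m\inf_{(\eps,h,\tau)}\Bbb P_\tau[Z_{\eps,h,\tau}\in K_m]=1$.

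Finally, since $\mathbf Z$ is completely regular this last display is precisely the hypothesis of Theorem~\ref{prokhorov}: for any sequence $(\eps_n,h_n,\tau_n)$ the laws $\mu_n$ of $Z_{\eps_n,h_n,\tau_n}$ satisfy $\sup_m\inf_n\mu_n(K_m)=1$, so a subsequence converges to a Radon probability measure $\mu$ on $\mathscr B(\mathbf Z)$. I expect the only genuinely delicate point to be the measurability step: because $\mathbf Z$ is a non-metrizable product, its Borel $\sigma$-algebra strictly contains the product of the coordinate Borel $\sigma$-algebras, so one must exploit the $\sigma$-compactness of the range together with metrizability of compacts (Remark~\ref{rem_qps}, Corollary~\ref{cor_2_me}) rather than argue coordinate by coordinate; the tightness and Prokhorov steps are then essentially bookkeeping on top of Proposition~\ref{tight_part} and Lemma~\ref{donsker}.
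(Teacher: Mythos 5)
Your proposal is correct and follows essentially the same route as the paper: measurability via the $\sigma$-compact range with metrizable compacts (the paper simply cites \cite[Lemma 6.4.2/ii]{bog} where you invoke Corollary~\ref{cor_2_me} coordinatewise), tightness by assembling the compacts of Proposition~\ref{tight_part} and the Besov bound \eqref{donsker_besov} into a product compact, and convergence via Theorem~\ref{prokhorov}. You merely spell out the bookkeeping that the paper leaves implicit.
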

\begin{proof} 
Since $\overline{X}_{\eps,h,\tau}$, $\underline{X}_{\eps,h,\tau}$ and $X_{\eps,h,\tau}$ take values in $\sigma$-compact subsets of $Q_{c,BV}([0,T];\Bbb L^2_w)$, $Q_c([0,T];\Bbb L^2_w)$ and $C([0,T];\Bbb L^2_w)$ respectively by Theorem \ref{comp_char} and Example \ref{pc_comp} and the fact that compact sets in all these spaces are metrizable (Remark \ref{rem_qps}), we get that $Z_{\eps,h,\tau}$ is Borel measurable e.g. by \cite[Lemma 6.4.2/ii]{bog}. Tightness follows from Proposition \ref{tight_part} and Lemma \ref{donsker} and convergence of a subsequence by Theorem \ref{prokhorov}.
\end{proof}

\section{Construction of a probabilistically weak SVI solution}\label{CoMaSo}}

Thanks to Corollary~\ref{main_cor}, in the sequel, we choose a subsequence
 $(\varepsilon_k,h_k,\tau_k)\to (0,0,0)$ such that the Borel laws of $Z_k=Z_{\eps_k,h_k,\tau_k}$ under $\Bbb P_{\tau_k}$ converge to a Radon probability measure $\mu$ on $\mathscr B(\mathbf Z)$.

\begin{lemma}\label{cont_meas} Let $F_k,F:\mathbf Z\to[-\infty,\infty]$ be such that $F_k|_K$ and $F|_K$ are $\mathscr B(K)$-measurable for every compact $K$ in $\mathbf Z$ (e.g., sequentially lower semicontinuous) and every $k\in\Bbb N$). 
Further, assume that one of the following
\begin{itemize}
\item[(a)] $F_k$ and $F$ are non-negative and 
$$
F(f,g,h,w^1,w^2,\dots)\le\liminf_{k\to\infty}F_k(f_k,g_k,h_k,w^1_k,w^2_k,\dots),
$$
\item[(b)] $\lim_{k\to\infty}F_k(f_k,g_k,h_k,w^1_k,w^2_k,\dots)=F(f,g,h,w^1,w^2,\dots)$ and
\begin{equation}\label{unif_int_fk}
\lim_{R\to\infty}\left[\sup_{k\in\Bbb N}\Bbb E_{\tau_k}\left[\mathbf 1_{[|F_k(Z_k)|>R]}|F_k(Z_k)|\right]\right]=0
\end{equation}
\end{itemize}
holds for every
\begin{itemize}
\item[(i)] $f_k\to f$ in $Q_{c,BV}([0,T];\Bbb L^2_w)$, $\sup_k\int_0^T\|f_k(s)\|_{BV(\mathcal O)}\,ds<\infty$, $f\in C([0,T];\Bbb L^2_w)$,
\item[(ii)] $g_k\to g$ in $Q_c([0,T];\Bbb L^2_w)$, $\sup_k\int_{\tau^*_k}^T\|g_k(s)\|_{BV(\mathcal O)}\,ds<\infty$, $g\in C([0,T];\Bbb L^2_w)\cap Q_{c,BV}([0,T];\Bbb L^2_w)$,
\item[(iii)] $h_k\to h$ in $C([0,T];\Bbb L^2_w)$, $\sup_k\int_{\tau^*_k}^T\|h_k(s)\|_{BV(\mathcal O)}\,ds<\infty$, $h\in Q_{c,BV}([0,T];\Bbb L^2_w)$,
\item[(iv)] $w^j_k\to w^j$ in $C([0,T])$ for every $j\in\Bbb N$
\end{itemize}
where $\tau^*_k=\max\{\tau_i:\,i\ge k\}$. If (a) holds then
$$
\int_{\mathbf Z}F\,d\mu\le\liminf_{k\to\infty}\Bbb E_{\tau_k}\left[F_k(Z_k)\right].
$$
If (b) holds then
$$
\int_{\mathbf Z}F\,d\mu=\lim_{k\to\infty}\Bbb E_{\tau_k}\left[F_k(Z_k)\right].
$$
\end{lemma}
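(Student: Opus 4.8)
The plan is to deduce the lemma from the reinforced Portmanteau theorem, Proposition~\ref{portmanteau}, applied on $Z=\mathbf Z$ with the Radon probability measures $\mu_k=\Bbb P_{\tau_k}\circ Z_k^{-1}$ (Radon because $Z_k$ is concentrated on a $\sigma$-compact set, cf.\ Corollary~\ref{main_cor}), which converge to $\mu$ on $C_b(\mathbf Z)$ by the choice of the subsequence made just before the lemma. The measurability hypothesis makes $F_k$ and $F$ Borel on every compact subset of $\mathbf Z$, so the whole problem reduces to producing, for each $r\in(0,1)$, a decreasing sequence of metrizable compact (hence closed) sets $K_{r,k}\searrow K_{r,\infty}$ in $\mathbf Z$ with $\mu_k(K_{r,k})\ge 1-r$ having the property that whenever $x_k\in K_{r,k}$ converges in $\mathbf Z$ to some $x$, the components of $(x_k)_k$ and of $x$ satisfy conditions (i)--(iv) of the lemma.

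To build $K_{r,k}$ I would splice together the tightness sets of Proposition~\ref{tight_part} with the tightness of the discretized Wiener paths from Lemma~\ref{donsker}. Because the mesh sequence $\tau_k$ need not be monotone --- which is exactly why conditions (i)--(iii) involve the tail quantity $\tau^*_k=\max\{\tau_i:\,i\ge k\}$ --- I set $N^*_k:=T/\tau^*_k=\min\{N_i:\,i\ge k\}$, so $N^*_k\nearrow\infty$ and $\tau^*_k\searrow 0$. Fix $r$, pick $R=R(r)$ so large that each of the three probabilities in Proposition~\ref{tight_part} is $\le r/6$, and use Lemma~\ref{donsker} together with the Prokhorov theorem to choose compacts $\mathcal K^j\subseteq C([0,T])$ (containing the zero path, to absorb the indices $j>N_k$ for which $W^j_{\tau_k}\equiv 0$) with $\sup_k\Bbb P_{\tau_k}[W^j_{\tau_k}\notin\mathcal K^j]\le r2^{-j-2}$. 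Writing $\mathbf{0}$ and $\mathbf{t}$ with $t_n=T/n$ for the shift sequences, put
$$
K_{r,k}=V^{N^*_k}_{R,\mathbf{0}}\times V^{N^*_k}_{R,\mathbf{t}}\times V_{R,\infty,\tau^*_k}\times\prod_{j=1}^\infty\mathcal{K}^j\,.
$$
By Proposition~\ref{tight_part} (and Example~\ref{pc_comp}) the four groups of factors are compact subsets of $Q_{c,BV}([0,T];\Bbb L^2_w)$, $Q_c([0,T];\Bbb L^2_w)$, $C([0,T];\Bbb L^2_w)$ and $C([0,T])$ respectively, so $K_{r,k}$ is compact in $\mathbf Z$ by Tychonoff's theorem, closed since $\mathbf Z$ is Hausdorff, and metrizable since compacts of $\mathbf Z$ are metrizable (Remark~\ref{rem_qps}). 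As $N^*_k$ is nondecreasing and $\tau^*_k$ nonincreasing, the $K_{r,k}$ decrease in $k$; and since $N_k\ge N^*_k$ and $\tau_k=T/N_k\le\tau^*_k$, Proposition~\ref{tight_part} gives $\overline X_{\tau_k}\in V_{R,N_k,0}\subseteq V^{N^*_k}_{R,\mathbf{0}}$, $\underline X_{\tau_k}\in V_{R,N_k,T/N_k}\subseteq V^{N^*_k}_{R,\mathbf{t}}$ and $X_{\tau_k}\in V_{R,\infty,T/N_k}\subseteq V_{R,\infty,\tau^*_k}$, each with $\Bbb P_{\tau_k}$-probability at least $1-r/6$, so a union bound over these three events and over the $\mathcal K^j$'s yields $\mu_k(K_{r,k})\ge 1-r$.

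There remains the verification of (i)--(iv) for convergent sequences in $K_{r,k}$, together with the identification of $K_{r,\infty}$. The decisive point is that a function belonging to $V_{R,n,a}$ for arbitrarily large $n$ inherits, on letting $n\to\infty$ in the built-in bound $\mathbf m(f,\delta)\le R(\delta+T/n)^s$, the estimate $\mathbf m(f,\delta)\le R\delta^s$; hence it is continuous as an $\Bbb H^{-1}$-valued function and, being bounded in $\Bbb L^2$, continuous into $\Bbb L^2_w$, while $\int_0^T\|f(s)\|_{BV(\mathcal O)}\,\dd s\le R$ follows by monotone convergence. Together with the lower semicontinuity of $f\mapsto\int_0^T\|f(s)\|_{BV(\mathcal O)}\,\dd s$ on $Q([0,T];\Bbb L^2_w)$ already used in the proof of Proposition~\ref{tight_part}, this gives $\bigcap_k V^{N^*_k}_{R,\mathbf{0}}=\bigcap_k V^{N^*_k}_{R,\mathbf{t}}=\bigcap_k V_{R,\infty,\tau^*_k}=V_{R,\infty,0}\subseteq C([0,T];\Bbb L^2_w)\cap Q_{c,BV}([0,T];\Bbb L^2_w)$, so $K_{r,\infty}=V_{R,\infty,0}\times V_{R,\infty,0}\times V_{R,\infty,0}\times\prod_j\mathcal{K}^j$. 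Consequently, if $x_k=(f_k,g_k,h_k,w^1_k,\dots)\in K_{r,k}$ converges in $\mathbf Z$ to $x=(f,g,h,w^1,\dots)$, then $f,g,h\in V_{R,\infty,0}$, so $f\in C([0,T];\Bbb L^2_w)$, $g\in C([0,T];\Bbb L^2_w)\cap Q_{c,BV}([0,T];\Bbb L^2_w)$, $h\in Q_{c,BV}([0,T];\Bbb L^2_w)$; moreover the $BV$-integrals $\int_0^T\|f_k\|_{BV(\mathcal O)}$, $\int_{\tau^*_k}^T\|g_k\|_{BV(\mathcal O)}$, $\int_{\tau^*_k}^T\|h_k\|_{BV(\mathcal O)}$ are all $\le R$ (using $T/n_k\le T/N^*_k=\tau^*_k$ when $g_k\in V_{R,n_k,T/n_k}$); and $f_k\to f$ in $Q_{c,BV}$, $g_k\to g$ in $Q_c$, $h_k\to h$ in $C([0,T];\Bbb L^2_w)$, $w^j_k\to w^j$ in $C([0,T])$ is just the componentwise convergence in $\mathbf Z$. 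These are precisely conditions (i)--(iv).

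With this property established, the lemma follows: under (a), the inequality $F(x)\le\liminf_k F_k(x_k)$ holds for every such convergent sequence $x_k\in K_{r,k}$, so the defect set $D_r$ of Proposition~\ref{portmanteau}(1) is empty and Proposition~\ref{portmanteau}(1) gives $\int_{\mathbf Z}F\,\dd\mu\le\liminf_k\int_{\mathbf Z}F_k\,\dd\mu_k$; under (b) one has $F_k(x_k)\to F(x)$ along such sequences, so the defect set of Proposition~\ref{portmanteau}(2) is empty, \eqref{unif_int_fk} is exactly the remaining uniform-integrability hypothesis of Proposition~\ref{portmanteau}(2), and one gets $\int_{\mathbf Z}F\,\dd\mu=\lim_k\int_{\mathbf Z}F_k\,\dd\mu_k$. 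Since $Z_k$ is Borel and $F_k$ is $\mu_k$-measurable (as provided by Proposition~\ref{portmanteau}), the change of variables for the image measure gives $\int_{\mathbf Z}F_k\,\dd\mu_k=\Bbb E_{\tau_k}[F_k(Z_k)]$, which is the assertion. I expect the delicate point to be the construction of the $K_{r,k}$: they must decrease in $k$, yet each must still capture the $k$-th interpolants with probability $\ge 1-r$, and they must be thin enough that any convergent sequence inside them can only have a \emph{continuous} limit --- which is what forces taking unions over all equidistant partitions finer than $N^*_k$ and exploiting the precise shape $(\delta+T/n)^s$ of the modulus-of-continuity bounds from Lemma~\ref{mod_cont_m1} and Proposition~\ref{tight_part}. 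Everything else is an application of the machinery of Sections~\ref{SLCS} and \ref{sec_conv}.
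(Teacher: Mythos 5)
Your proof is correct and follows essentially the same route as the paper: both reduce the statement to Proposition~\ref{portmanteau} via decreasing metrizable sets built from the $V_{R,n,a}$ of Proposition~\ref{tight_part}, whose intersection is $V_{R,\infty,0}\times V_{R,\infty,0}\times V_{R,\infty,0}\times\cdots$, with conditions (i)--(iv) checked for convergent sequences through these sets. The only (harmless) difference is that you also compactify the Wiener factors via Lemma~\ref{donsker}, obtaining genuinely compact $K_{r,k}$, whereas the paper keeps the full $C([0,T])$ factors and uses only closedness and metrizability (which is all Proposition~\ref{portmanteau} demands); your variant in fact matches the lemma's hypothesis that $F_k$, $F$ be measurable on compacts slightly more directly.
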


\begin{proof}
The sets
$$
\mathcal K_{R,n}=\left[\bigcup_{m\in[n,\infty]}V_{R,m,0}\right]\times\left[\bigcup_{m\in[n,\infty]}V_{R,m,T/m}\right]\times V_{R,\infty,T/n}\times C([0,T])\times C([0,T])\times\dots\,,
$$
are closed, metrizable and decreasing in the second variable, 
$$
\mathcal K_{R,\infty}:=\bigcap_{n=1}^\infty\mathcal K_{R,n}=V_{R,\infty,0}\times V_{R,\infty,0}\times V_{R,\infty,0}\times C([0,T])\times C([0,T])\times\dots\,,
$$
and
$$
\Bbb P_{\tau_k}\,[Z_k\notin\mathcal K_{R,T/\tau^*_k}]\le\Bbb P_{\tau_k}\,[Z_k\notin\mathcal K_{R,T/\tau_k}]\le\frac{C}{R}\,,
$$
by Proposition \ref{tight_part}. The rest follows from Proposition \ref{portmanteau}.
\end{proof}

\begin{remark} {From the definition of the topological space $Q_{c,BV}([0,T];\Bbb L^2_w)$} we observe  that $\overline{X}_{\eps_k,h_k,\tau_k}$ converges in a significantly stronger (hence better) sense than $\underline{X}_{\eps_k,h_k,\tau_k}$ and $X_{\eps_k,h_k,\tau_k}$.
\end{remark}

\begin{corollary}\label{first_prop} If $\alpha\in(0,\frac{1}{2})$ then the following holds:
\begin{itemize}
\item[(I)] The $\Bbb L^2$-valued processes $S^1$, $S^2$, $S^3$ and the real-valued processes $\Wproc$ are $(\mathcal Z_t)$-progressively measurable.
\item[(II)] $\mu(S^1=S^2=S^3)=1$.
\item[(III)] We have
$$
\int_{\mathbf Z}\left[\sup_{t\in[0,T]}\|S^3(t)\|^4+\|S^3\|_{W^{\alpha,4}(0,T;\Bbb H^{-1})}^4+\left(\int_0^T\|S^1(t)\|_{BV(\mathcal O)}{\,\dd t}\right)^2\right]\,\dd\mu<\infty\,.
$$
\item[(IV)] The $\sigma$-algebras $\mathcal Z_t$ and $\sigma(W^j(b)-W^j(a):\,t\le a\le b\le T,\,j\in\Bbb N)$ are $\mu$-independent.
\item[(V)] The processes $W^1,W^2,W^3,\dots$ are $\mu$-independent {$(\mathcal Z_t)$-}Brownian motions.
\end{itemize}
\end{corollary}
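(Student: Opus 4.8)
The plan is to verify each of the five items largely as soft consequences of the tools already assembled: Corollary \ref{main_cor} (tightness and weak convergence of the laws $\mu_k$ of $Z_k$ to $\mu$), Lemma \ref{cont_meas} (lower semicontinuity/convergence transfer along the chosen subsequence), Theorem \ref{comp_char} together with Example \ref{pc_comp} (the compact path sets and the fact that on bounded subsets of $\Bbb L^2$ the weak and $\Bbb H^{-1}$ topologies coincide), and Lemma \ref{donsker} (Donsker-type convergence of $W^j_\tau$ to the Wiener measure). For (I), I would first note that each of $S^1,S^2,S^3$ is a uniform limit of step or piecewise-linear functions, hence (Remark \ref{rem_1_ct}) continuous off an at most countable set and Borel measurable in $t$; combined with $(\mathcal Z_t)$-adaptedness (which holds by construction of $\mathcal Z_t$ as the $\sigma$-algebra generated by the coordinate evaluations up to time $t$), the standard fact that a right-continuous-off-a-countable-set adapted process is progressively measurable gives the claim; the $W^j$ are genuinely continuous and adapted, so progressive measurability is classical.

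For (II), I would apply Lemma \ref{cont_meas}(a) with $F_k=F$ a fixed bounded lower-semicontinuous functional that penalizes the discrepancy between the three path components in the weak (equivalently $\Bbb H^{-1}$, on bounded sets) topology — concretely something like $F(f,g,h,w^1,\dots)=\wedge\big(1,\int_0^T\|f(s)-g(s)\|_{\Bbb H^{-1}}^2+\|g(s)-h(s)\|_{\Bbb H^{-1}}^2\,\dd s\big)$. On $Z_k$ this functional is controlled by $\Bbb E_{\tau_k}\|\overline X_{\tau_k}-\underline X_{\tau_k}\|^2+\Bbb E_{\tau_k}\|\overline X_{\tau_k}-X_{\tau_k}\|^2$, which by \eqref{un_est_3} is $O(\tau_k^{2})\to 0$; by the semicontinuity built into Lemma \ref{cont_meas} this forces $\int_{\mathbf Z}F\,\dd\mu=0$, i.e. $S^1=S^2=S^3$ $\mu$-a.e. (One must check $F$ is sequentially l.s.c. on the relevant compacts, which follows because $\|\cdot\|_{\Bbb H^{-1}}$ is weakly l.s.c. and Fatou applies in the time integral.) For (III), I would again invoke Lemma \ref{cont_meas}(a), now with the non-negative l.s.c. functional $F(f,g,h,\dots)=\sup_t\|h(t)\|^4+\|h\|_{W^{\alpha,4}(0,T;\Bbb H^{-1})}^4+(\int_0^T\|f(s)\|_{BV}\,\dd s)^2$; each summand is l.s.c. on $\mathbf Z$ (the $BV$-integral by the supremum-of-continuous-functionals argument used in the proof of Proposition \ref{tight_part}, the $\sup$-norm and Besov norm by standard l.s.c. arguments), and on $Z_k$ its expectation is bounded uniformly in $k$ by Lemma \ref{lem_energy}, Lemma \ref{mod_cont_m1} and Lemma \ref{un_est}; passing to the liminf gives the stated finiteness under $\mu$.

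For (IV) and (V) I would exploit the martingale/independence structure already present at the discrete level. The increments $\xi^{\ell,j}_\tau$ are, by the defining properties in Section \ref{sec_num}, centered, with variance $\tau$ and $(i$-block$)$ independent of $\mathcal F^{i-1}_\tau$; hence for the discrete process $Z_k$ one has exactly that $\mathcal Z_t$-type $\sigma$-algebras are independent of the future noise increments, and that $(W^j_\tau)_j$ have orthogonal, asymptotically Gaussian increments. The passage to the limit is the only delicate point: to transfer an independence statement one writes it as $\Bbb E[\phi\,\psi]=\Bbb E[\phi]\,\Bbb E[\psi]$ for bounded continuous $\phi$ depending on coordinates up to time $t$ and bounded continuous $\psi$ depending on increments of $W^j$ after time $t$, checks it holds (up to $o(1)$) along $Z_k$ using the discrete independence and Lemma \ref{donsker}, and then passes to the limit using weak convergence $\mu_k\to\mu$ (here continuity and boundedness suffice, so no tightness subtlety arises). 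That each $W^j$ is a $(\mathcal Z_t)$-Brownian motion then follows by the Lévy characterization: one shows $W^j$ and $(W^j)^2-t$ and $W^jW^i$ ($i\neq j$) are $(\mathcal Z_t)$-martingales under $\mu$, again by testing the martingale identity against bounded continuous $\mathcal Z_t$-measurable functionals and passing to the limit from the discrete-martingale identities for the $W^j_\tau$ (whose quadratic-variation error terms vanish by the fourth-moment bound $\Bbb E|\xi^{i,j}_\tau|^4\le C\tau^2$). The main obstacle I anticipate is precisely the limit transfer in (IV)--(V): one must be careful that the functionals used to express independence and the martingale property are genuinely bounded and continuous on $\mathbf Z$ (so that plain weak convergence applies) rather than merely measurable, and that the discrete identities carry the right uniform error estimates — but the needed moment bounds are exactly the ones supplied by Lemma \ref{lem_energy}, Lemma \ref{lem_incr} and the assumptions on $\xi^{i,j}_\tau$.
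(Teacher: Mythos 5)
Your argument is correct, and for items (I)--(IV) it follows essentially the paper's own route: (I) via Remark \ref{rem_1_ct} plus adaptedness, (II) and (III) by feeding suitable lower semicontinuous functionals into Lemma \ref{cont_meas} together with the uniform bounds of Lemmas \ref{un_est} and \ref{mod_cont_m1} (for (II) the paper likewise relies on \eqref{un_est_3}; your explicit choice of the truncated $\Bbb H^{-1}$-discrepancy functional is a reasonable concretization, and note that a.e.-in-$t$ equality upgrades to everywhere equality because $\mu$ is carried by continuous paths), and (IV) by transferring the exact discrete independence, valid for increments starting at $u\ge t+\tau_k$, through the weak limit and then shrinking $u\downarrow t$ by path continuity.

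The one place you genuinely diverge is (V). You prove the Brownian property by Lévy's characterization, passing the discrete martingale identities for $W^j_\tau$, $(W^j_\tau)^2-t$ and $W^i_\tau W^j_\tau$ to the limit (with uniform integrability supplied by $\E{|\xi^{i,j}_\tau|^4}\le C\tau^2$). The paper instead argues more economically: Lemma \ref{donsker} already identifies the law of each $W^j$ under $\mu$ as the Wiener measure, the mutual independence of $\sigma(W^1),\sigma(W^2),\dots$ transfers from the discrete level exactly as in (IV) via Lemma \ref{cont_meas}, and the $(\mathcal Z_t)$-Brownian property then follows from (IV), since a Brownian motion whose future increments are independent of $\mathcal Z_t$ is a $(\mathcal Z_t)$-Brownian motion. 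Your route costs more bookkeeping (three families of martingale identities and their uniform error estimates) but is self-contained and would also work if one had not already established Lemma \ref{donsker}; the paper's route is shorter precisely because it reuses Lemma \ref{donsker} and item (IV). Both are valid, and your anticipated obstacle — that the test functionals must be bounded and continuous on $\mathbf Z$ so that plain weak convergence applies — is indeed the point to watch, but it is unproblematic here because the coordinate evaluations $\pi_s$ are continuous for the uniform-convergence topology.
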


\begin{proof} (I) {follows from Remark \ref{rem_1_ct} as the processes $S^1$, $S^2$, $S^3$ are continuous with an exception of an at most countable set and they are $(\mathcal Z_t)$-adapted by definition, cf. \cite[Proposition 1.13]{Karatzas_Shreve_1988},} and (II), (III) from Lemma \ref{un_est}, Lemma \ref{mod_cont_m1} and Lemma \ref{cont_meas}.

As for (IV), it suffices to realize that
$$
\mathcal Z_t=\sigma((\varphi,S^1_s),(\varphi,S^2_s),(\varphi,S^3_s),W^j_s:\,s\in[0,t],\,j\in\Bbb N,\,\varphi\in\Bbb L^2).
$$
If $u\ge t+\tau_k$ then $\sigma(W^j(b)-W^j(a):\,u\le a\le b\le T,\,j\in\Bbb N)$ and $\mathcal Z_t$ are $\Bbb P_{\tau_k}(Z_k\in\cdot\,)$-independent, hence also $\mu$-independent by Lemma \ref{cont_meas}. Consequently, $\sigma(W^j(b)-W^j(a):\,t<a\le b\le T,\,j\in\Bbb N)$ and $\mathcal Z_t$ are $\mu$-independent but the former coincides with $\sigma(W^j(b)-W^j(a):\,t\le a\le b\le T,\,j\in\Bbb N)$ since the processes $W^j$ are continuous.

As for (V), the $\sigma$-algebras $\sigma(W^1),\sigma(W^2),\sigma(W^3),\dots$ are $\Bbb P_{\tau_k}(Z_k\in\cdot\,)$-independent, hence also $\mu$-independent by Lemma \ref{cont_meas}. And Lemma  \ref{donsker} yields that they are Brownian.
\end{proof}
}

\begin{thm}\label{main_full_1} 
{The process $S^3$ defined in (\ref{proj}) is an SVI solution on $(\mathbf Z,\mathscr B(\mathbf Z),(\mathcal Z^\mu_t),\mu)$ with Wiener processes $\Wproc$ (also defined in defined in (\ref{proj})) 
in the sense of Definition \ref{def_svi}.}
\end{thm}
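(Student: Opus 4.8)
The goal is to pass to the limit in the discrete variational inequality of Lemma~\ref{lem_3} and identify $S^3$ as an SVI solution on the constructed stochastic basis $(\mathbf Z,\mathscr B(\mathbf Z),(\mathcal Z^\mu_t),\mu)$. First I would record what is already available: by Corollary~\ref{first_prop}, $S^1=S^2=S^3=:X$ $\mu$-a.s., $X$ has weakly continuous paths in $\Bbb L^2$ and satisfies the moment bound \eqref{energ_eq} (via part (III)), the $W^j$ are independent $(\mathcal Z_t)$-Brownian motions, and $X$ is $(\mathcal Z_t)$-progressively measurable; moreover $X\in L^2([0,T]\times\mathbf Z;\Bbb L^2)$. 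It remains to verify that \eqref{SVI_def} holds for every admissible test process $I$ as in \eqref{test_process}. The key tool is Lemma~\ref{cont_meas}: for a fixed test process $I$ I would build functionals $F_k,F$ on $\mathbf Z$ so that $\Bbb E_{\tau_k}[F_k(Z_k)]$ reproduces (after discretizing $I$ appropriately) the discrete inequality from Lemma~\ref{lem_3}, and $F$ reproduces \eqref{SVI_def}.

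\textbf{Step 1: reduction to a countable dense family of test processes.} Since the SVI inequality is stable under approximation of $(u^0,G,H)$ in the natural norms (using the moment bound \eqref{energ_eq} on $X$, the linear growth of $B$, and weak lower semicontinuity of $\mathcal J$), it suffices to verify \eqref{SVI_def} for test processes $I$ with $u^0$, $G$, $H$ taking values in some $\Bbb V_{h}$, piecewise constant in time on a dyadic grid, and adapted to $(\mathcal Z_t)$; such processes, suitably enlarged, are dense. For such an $I$ one can, for each $k$, form the discrete test process $U^i_k$ of the form \eqref{uidef} by sampling $G$, $H$ on the grid $t^{(k)}_i=i\tau_k$ and applying $P_{h_k}$, so that $U^i_k$ is $\mathcal F_{\tau_k}$-adapted and Lemma~\ref{lem_3} applies to the pair $(X^i,U^i_k)$ coming from the scheme with parameters $(\varepsilon_k,h_k,\tau_k)$. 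Here one must use that $G$, $H$ are functionals of $Z_k$ through the projections $S^j_s$, $W^j_s$, $s\le t$, so that the whole discrete inequality can be written as $F_k(Z_k)\ge 0$ for an explicit measurable $F_k$; and that for the limiting basis the analogous functional is $F(Z)=$ (RHS of \eqref{SVI_def}) $-$ (LHS of \eqref{SVI_def}) evaluated along $Z$.

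\textbf{Step 2: limit of each term.} Along the subsequence $(\varepsilon_k,h_k,\tau_k)$ I would pass to the limit term by term, invoking Lemma~\ref{cont_meas} in the form (a) (lower semicontinuity) for the ``good'' terms and in the form (b) (convergence with uniform integrability) for the remaining ones. Concretely: (i) $\tfrac12\E{\|X^i-U^i_k\|^2}\to\tfrac12\E{\|X(t)-I(t)\|^2}$ using weak-$\Bbb L^2$ uniform convergence $\overline{X}_{\tau_k}\to X$, the strong $\Bbb L^2$ convergence of the discretized $U^i_k$ to $I(t)$ (which needs $\|\nabla v-\nabla P_h v\|\to0$ for $v\in\Bbb H^1_0$, the stochastic-integral approximation via Lemma~\ref{donsker}, and an Itô-type computation), together with the uniform fourth-moment bound for the uniform integrability; (ii) $\tau_k\sum_\ell\E{\mathcal J_{\varepsilon_k}(X^\ell)}\to\E{\int_0^t\mathcal J(X)}$ from below, combining $\mathcal J_\varepsilon\ge\mathcal J-\varepsilon|\mathcal O|$-type bounds with the weak lower semicontinuity of $\mathcal J$ (Lemma~\ref{lem_lsc}) on $Q_{c,BV}$; (iii) $\tau_k\sum_\ell\E{\mathcal J_{\varepsilon_k}(U^\ell_k)}\to\E{\int_0^t\mathcal J(I(s))\,\dd s}$ using $\mathcal J_\varepsilon(v)\to\into|\nabla v|$ for $v\in\Bbb H^1_0$ and dominated convergence in $k$ given the $L^1(\Omega\times(0,T);\Bbb H^1_0)$ bound on $I$; (iv) the linear term $(G^\ell,X^\ell-U^\ell_k)$ and the noise-correction term $\tfrac\tau2\sum\sum\E{\|P_{h_k}B_j(X^{\ell-1})-H^{\ell-1,j}\|^2}$ converge by (b), using continuity of $B$ — here assumption $(\mathbf B_2)$ and the a.e.\ convergence extracted from strong $\Bbb L^2$ compactness in space (the BV bound) are what deliver $B(\overline{X}_{\tau_k})\to B(X)$ in $\mathscr L_2$, while $(\mathbf B_1)$ plus Lemma~\ref{lem_energy} give the uniform integrability \eqref{unif_int_fk}.

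\textbf{Main obstacle.} The genuinely delicate point is the treatment of the noise: reconstructing, in the limit, the stochastic integral $\sum_j\int_0^t H_j\,\dd W^j$ and the cross term $\E{\int_0^t\|B(X)-H\|^2_{\mathscr L_2}}$ from the discrete sums $\sum_i\sum_j\ska{B_j(X^{i-1}),v_h}\xi^{i,j}_\tau$ and $\sum_i\sum_j\E{\|P_hB_j(X^{i-1})-H^{i-1,j}\|^2}$. One must show that the discrete martingale increments ``converge to $\int H\,\dd W$'' in the joint limit where the Brownian motions themselves are only emerging via Donsker (Lemma~\ref{donsker}); the clean way is to absorb these entirely into the functionals $F_k$, $F$ of Lemma~\ref{cont_meas} so that no separate martingale identification is needed, exploiting that $\xi^{i,j}_{\tau_k}=W^j_{\tau_k}(t_i)-W^j_{\tau_k}(t_{i-1})$ are coordinates of $Z_k$ and that $W^j_{\tau_k}\to W^j$ uniformly on compacts. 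Verifying that the resulting $F_k$ satisfies the convergence hypothesis (b) along every convergent sequence in $\mathbf Z$ with uniformly bounded BV-integrals — in particular that the quadratic variation contribution $\|\sum_j B_j(X^{i-1})\xi^{i,j}_\tau\|^2$ has the right limit and obeys \eqref{unif_int_fk} — is where the fourth-moment estimate $\E{|\xi^{i,j}_\tau|^4}\le C\tau^2$ and Lemma~\ref{lem_energy} are used decisively. Once all terms are matched, Lemma~\ref{cont_meas} yields \eqref{SVI_def} for the dense family, Step~1 extends it to all admissible $I$, and with Corollary~\ref{first_prop}(I)--(V) this is exactly Definition~\ref{def_svi}.
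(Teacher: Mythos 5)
Your plan is correct and follows essentially the same route as the paper: apply Lemma~\ref{lem_3} to discrete test processes built from (continuous) functionals of the canonical coordinates of $\mathbf Z$ sampled on the time grid, pass to the limit term by term via Lemma~\ref{cont_meas} (lower semicontinuity for the $\mathcal J$ and $\|X-I\|^2$ terms, $(\mathbf B_2)$ plus the $BV$-compactness for the $B$ term, uniform integrability from the fourth-moment bounds), and conclude by density. Your resolution of the "main obstacle" — absorbing the discrete stochastic sums into the functionals $F_k$ by writing $\xi^{i,j}_{\tau_k}$ as increments of the coordinate processes $W^j_{\tau_k}$ — is precisely what the paper does via \eqref{def_U} and the modulus-of-continuity estimate \eqref{est_I_U_1}.
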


\begin{proof}
The proof is divided into several steps. Recall that we consider the sub-sequence $(\varepsilon_k,h_k,\tau_k)\to (0,0,0)$ for $k\rightarrow 0$.

\medskip

{\bf(i)} First, we show that a discrete version {(\ref{disc5})} of \eqref{SVI_def} holds for simple step-processes $G$ and $H$. For let $0=s_0<\dots<s_m=T$, define $\Bbb R^{4M^2}$-valued 
 continuous mappings on $\mathbf Z$ as
$$
V^\alpha=((\varphi_\beta, S^1_{r^{\alpha}_\gamma}),(\varphi_\beta,S^2_{r^{\alpha}_\gamma}),(\varphi_\beta,S^3_{r^{\alpha}_\gamma}),W^j_{r^{\alpha}_\gamma}:\beta,\gamma,j\in\{1,\dots,M\}),\quad 0\le\alpha\le m\,,
$$
for some {$r^\alpha_\gamma\in[0,s_\alpha]$} and {$\varphi_\beta\in\Bbb L^2$} {where we consider the product with $\varphi_\beta$ to work with real-valued random variables,} and let
$$
g_\alpha,h_{\alpha,j}:\Bbb R^{4M^2}\to\Bbb H^1_0,\qquad\alpha\in\{0,\dots,m\},\,j\in\Bbb N\,,
$$
be $\Bbb H^1_0$-bounded continuous functions such that $h_{\alpha,j}=0$ for $j\ge j_0$ {and some arbitrary $j_0\in\Bbb N$, to simplify the argument.} 
We define
\begin{equation*}
G(t)=\sum_{\alpha=0}^{m-1}\mathbf 1_{(s_\alpha,s_{\alpha+1}]}(t)g_\alpha(V^\alpha),\qquad H_j(t)=\sum_{\alpha=0}^{m-1}\mathbf 1_{(s_\alpha,s_{\alpha+1}]}(t)h_{\alpha,j}(V^\alpha)\,,
\end{equation*}
and
\begin{equation}\label{def_I}
I(t)=u^0-\int_0^tG(s)\,\dd s+\sum_{j=1}^{j_0}\int_0^tH_j(s)\,\dd W^j.
\end{equation}
Setting $N_k=T/\tau_k$, $t_i:=i\tau_k$ for $i\in\{0,\dots,N_k\}$ then $G_{t_i}(Z_k)$ and $H_{j,t_i}(Z_k)$ are $\mathcal F^i_{\tau_k}$-measurable, Lemma~\ref{lem_3} yields
\begin{align*}
\frac{1}{2}\Bbb E_{\tau_k}[\|S^1_{t_i}(Z_k)&-P_{h_k}U^i(Z_k)\|^2]+\Bbb E_{\tau_k}\left[\int_0^{t_i}\mathcal J_{\varepsilon_k}(S^1_s(Z_k))\,\dd s\right]\le\frac{1}{2}\|x^0-u^0\|^2
\\
&+\sum_{\ell=1}^i\Bbb E_{\tau_k}\left[\int_{t_{\ell-1}}^{t_\ell}[\mathcal J_{\varepsilon_k}(P_{h_k}U^\ell(Z_k))+(P_{h_k}G_{t_\ell}(Z_k),S^1_s(Z_k)-U^\ell(Z_k))]\,\dd s\right]
\\
&+\frac{1}{2}\sum_{\ell=2}^i\Bbb E_{\tau_k}\left[\int_{t_{\ell-2}}^{t_{\ell-1}}\|P_{h_k} B(S^1_s(Z_k))-{P_{h_k}}H_{t_{\ell-1}}(Z_k)\|_{\mathscr L_2(\ell_2,\Bbb L^2)}^2\,\dd s\right]
\\
& +\frac{\tau_k}{2}\|P_hB(x^0)\|_{\mathscr L_2(\ell_2,\Bbb L^2)}^2\,,
\end{align*}
for $0\le i\le N_k$ where
\begin{equation}\label{def_U}
U^i=u^0-\tau_k\sum_{\ell=1}^iG(t_\ell)+\sum_{\ell=1}^i\sum_{j=1}^{N_k}(W^j(t_\ell)-W^j(t_{\ell-1}))H_j(t_{\ell-1})\qquad i\in\{0,\dots,N_k\}\,,
\end{equation}
{as $S^1_{t_i}(Z_k)=\overline{X}^i_{\varepsilon_k,h_k,\tau_k}$ by the definition of $S^1$ and $Z_k$.} For $N_k\ge j_0$ we deduce that
\begin{equation}\label{est_I_U_1}
\max_{1\le\ell\le N_k}\sup_{t\in[t_{\ell-1},t_\ell]}\|I(t)-U^\ell\|_{\Bbb H^1_0}\le C_G\tau_k+C_H\sum_{j=1}^{j_0}\mathbf {m}(W^j,\tau_k)\,,
\end{equation}
where $\mathbf {m}$ is the modulus of continuity of real-valued functions.

{In the following, we replace $U$ by $I$ in the {last but one} inequality above, we proceed term by term. We note that
\begin{equation}\label{est_I_U_2}
\Bbb E_{\tau_k}\left[\max_{1\le\ell\le N_k}{\|U^\ell(Z_k)\|_{\Bbb H^1_0}^2}+\sup_{s\in[0,T]}{\|S^1_s(Z_k)\|^2}\right]\le C\,,
\end{equation}
and 
\begin{equation}\label{est_modu}
\Bbb E_{\tau_k}\left[\mathbf m(W^j{(Z_k)},\tau_k)\right]^2\le C\tau_k^{2\theta}\,,
\end{equation}
hold for some $\theta\in(0,\frac{1}{4})$ by \eqref{donsker_besov}, the Doob maximal inequality for submartingales and Lemma \ref{un_est}. 
Next, we observe that
\begin{align*}
|\Bbb E_{\tau_k}[\|S^1_{t_i}(Z_k)&-P_{h_k}U^i(Z_k)\|^2]-\Bbb E_{\tau_k}[\|S^1_{t_i}(Z_k)-P_{h_k}I_{t_i}(Z_k)\|^2]|
\\
&\le\Bbb E_{\tau_k}[\|P_{h_k}U^i(Z_k)-P_{h_k}I_{t_i}(Z_k)\|^2]+4\sqrt{C}\left\{\Bbb E_{\tau_k}[\|P_{h_k}U^i(Z_k)-P_{h_k}I_{t_i}(Z_k)\|^2]\right\}^\frac12
\\
&\le C\tau_k^\theta\,,
\end{align*}
and
\begin{align*}
\Bbb E_{\tau_k}&\left[\int_{t_{\ell-1}}^{t_\ell}|\mathcal J_{\varepsilon_k}(P_{h_k}U^\ell(Z_k))-\mathcal J_{\varepsilon_k}(P_{h_k}I_s(Z_k))|\,\dd s\right]
\\
&\le C\sum_{j=1}^2\int_{t_{\ell-1}}^{t_\ell}\Bbb E_{\tau_k}\left[\|P_{h_k}U^\ell(Z_k)-P_{h_k}I_s(Z_k)\|^j_{\Bbb H^1_0}\right]\,\dd s
\\
&+ C \sum_{j=1}^2\int_{t_{\ell-1}}^{t_\ell}\Bbb E_{\tau_k}\left[\|P_{h_k}U^\ell(Z_k)-P_{h_k}I_s(Z_k)\|^\frac{j}{2}_{\Bbb H^1_0}\|P_{h_k}U^\ell(Z_k)\|^\frac{\ell}{2}_{\Bbb H^1_0}\right]\,\dd s
\\
&\le C\tau_k^{1+\frac{\theta}{2}}\,,
\end{align*}
by the stability of the projections $\{P_h\}_{h>0}$ in $\Bbb H^1_0$ from Assumption \ref{ass_vh}$_{(3)}$.

Now denote by $R_k$ the set of $\ell\in\{1,\dots,N_k\}$ such that the interval $(t_{\ell-1},t_\ell)$ is not fully contained in some of the intervals $(s_\alpha,s_{\alpha+1}]$ for $\alpha\in\{0,\dots,m-1\}$. If $\ell\in R_k$ then there exists unique $\alpha$ such that $s_\alpha<t_\ell\le s_{\alpha+1}$. If $s_\alpha\le t_{\ell-1}$ then this would contradict that $\ell\in R_k$ hence $s_\alpha<t_\ell<s_\alpha+\tau_k$. In particular, $\operatorname{card}\,(R_k)\le m$, and consequently
\begin{align*}
\sum_{\ell=1}^{N_k}\Bbb E_{\tau_k}&\left[\int_{t_{\ell-1}}^{t_\ell}|(P_{h_k}G_{t_\ell}(Z_k)-P_{h_k}G_s(Z_k),S^1_s(Z_k)-U^\ell(Z_k))|\,\dd s\right]
\\
&=\sum_{\ell\in R_k}\Bbb E_{\tau_k}\left[\int_{t_{\ell-1}}^{t_\ell}|(P_{h_k}G_{t_\ell}(Z_k)-P_{h_k}G_s(Z_k),S^1_s(Z_k)-U^\ell(Z_k))|\,\dd s\right]
\\
&\le C m \tau_k\,.
\end{align*}
Analogously, we estimate
$$
\sum_{\ell=2}^{N_k}\Bbb E_{\tau_k}\left[\int_{t_{\ell-2}}^{t_{\ell-1}}\|B(S^1_s(Z_k))-H_{t_{\ell-1}}(Z_k)\|_{\mathscr L_2(\ell_2,\Bbb L^2)}^2\,\dd s\right] \leq C m \tau_k\,,
$$
$$
\sum_{\ell=2}^{N_k}\Bbb E_{\tau_k}\left[\int_{t_{\ell-2}}^{t_{\ell-1}}\|B(S^1_s(Z_k))-H_s(Z_k)\|_{\mathscr L_2(\ell_2,\Bbb L^2)}^2\,\dd s\right]  \leq C m \tau_k\,,
$$
by the linear growth of $B$ assumed in ${(\mathbf B_1)}$. In the fourth step, we estimate
$$
\Bbb E_{\tau_k}\left[\int_{t_{\ell-1}}^{t_\ell}|(P_{h_k}G_s(Z_k),U^\ell(Z_k))-(P_{h_k}G_s(Z_k),I_s(Z_k))|\,\dd s\right]\le\mathbf C\tau_k^{1+\theta}\,,
$$
by boundedness of $G$.} Hence, we conclude that
\begin{align}\label{disc5}
\nonumber
\frac{1}{2}\Bbb E_{\tau_k}[\|S^1_{t_i}(Z_k)&-P_{h_k}I_{t_i}(Z_k)\|^2]+\Bbb E_{\tau_k}\left[\int_0^{t_i}\mathcal J(S^1_s(Z_k))\,\dd s\right]\le
\\
\frac{1}{2}\Bbb E_{\tau_k}[\|S^1_{t_i}(Z_k)&-P_{h_k}I_{t_i}(Z_k)\|^2]+\Bbb E_{\tau_k}\left[\int_0^{t_i}\mathcal J_{\varepsilon_k}(S^1_s(Z_k))\,\dd s\right]\le\frac{1}{2}\|x^0-u^0\|^2
\\
\nonumber
&+\Bbb E_{\tau_k}\left[\int_0^{t_i}[\mathcal J_{\varepsilon_k}(P_{h_k}I_s(Z_k))+(P_{h_k}G_s(Z_k),S^1_s(Z_k)-I_s(Z_k))]\,\dd s\right]
\\
\nonumber
&+\frac{1}{2}\Bbb E_{\tau_k}\left[\int_0^{t_i}\|B(S^1_s(Z_k))-H_s(Z_k)\|_{\mathscr L_2(\ell_2,\Bbb L^2)}^2\,\dd s\right]+C\tau_k^{{\frac{\theta}2}}\,,
\end{align}
for $0\le i\le N_k$ and some $C$ independent of $i$ and $k$. Here we used $\mathcal J \leq \mathcal J_\eps$ and the linear growth of $B$ assumed in ${(\mathbf B_1)}$.

\medskip

{\bf(ii)} In the second step, we extend the discrete result from step {\bf (i)} to the time-continuous case {on the stochastic basis $(\mathbf Z,\mathscr B(\mathbf Z),(\mathcal Z^\mu_t),\mu)$}, yet still for the {simple}
processes $G$ and $H$ defined in part {\bf(i)}.

We note that by construction the mapping $I:[0,T]\times\mathbf Z\to\Bbb H^1_0$ from {\bf(i)} is continuous and the following properties hold for every $k$ and $r\in[0,T]$:
\begin{itemize}
\item[(a)] $\|S^1_r-P_{h_k}I_r\|^2$ is lower semicontinuous on $\mathbf Z$,
\item[(b)] $\int_0^r\mathcal J(S^1)\,\dd s$ is lower semicontinuous on $\mathbf Z$ by Remark \ref{funct_i_sup},
\item[(c)] $\int_0^r[\mathcal J_{\varepsilon_k}(P_{h_k}I)+(P_{h_k}G,S^1-I)]\,\dd s$ is continuous on $\mathbf Z$ as $(P_{h_k}G,S^1)=(G,P_{h_k}S^1)$,
\item[(d)] $\int_0^r\|B(S^1)-H\|_{\mathscr L_2(\ell_2,\Bbb L^2)}^2\,\dd s$ is $\mathscr B(\mathbf Z)$-measurable by Corollary \ref{first_prop} (I).
\end{itemize} 
Furthermore, from the fact that $\mathcal J_{\varepsilon}\rightarrow \mathcal J$ for $\eps \rightarrow 0$ and $\|\nabla v - \nabla P_{h}v\|\rightarrow 0$, $v\in \mathbb{H}^1$ for $h\rightarrow 0$ we deduce the convergence
\begin{align*}
\|S^1_t(z)-I_t(z)\|^2&\le\liminf_{k\to\infty}\|S^1_{t^k_{i_k}}(z_k)-P_{h_k}I_{t^k_{i_k}}(z_k)\|^2\,,
\\
\int_0^r\mathcal J(I(z))\,\dd s=&\lim_{k\to\infty}\int_0^r\mathcal J_{\varepsilon_k}(P_{h_k}I(z_k))\,\dd s\,,
\\
\int_0^r(G(z),S^1(z)-I(z))\,\dd s=&\lim_{k\to\infty}\int_0^r(P_{h_k}G(z_k),S^1(z_k)-I(z_k))\,\dd s\,,
\\
\int_0^r\|B(S^1(z))-H(z)\|_{\mathscr L_2(\ell_2,\Bbb L^2)}^2\,\dd s=&\lim_{k\to\infty}\int_0^r\|B(S^1(z_k))-H(z_k)\|_{\mathscr L_2(\ell_2,\Bbb L^2)}^2\,\dd s\,,
\end{align*}
whenever $t^k_{i_k}\nearrow t$ and $z_k\to z$ in the sense of (i)-(iv) of Lemma \ref{cont_meas} where, in the last step, we used the assumption ${(\mathbf B_2)}$ on continuity of $B$ if $d\ge 2$ (if $d=1$, continuity of $B$ suffices). Indeed, assume that 
\begin{equation}\label{mean_vonv_b}
\int_0^T\|B(f_k)-B(f)\|^2_{\mathscr L_2(\ell_2,\Bbb L^2)}\,\dd s\ge r>0\,,
\end{equation}
for some $f_k\to f$ in the sense of (i) of Lemma \ref{cont_meas}. Then $f_k\to f$ uniformly in $\Bbb H^{-d}$ and $\int_0^T\|f_k\|_{BV}\,ds\le C$. Hence $\int_0^T\|f_k-f\|_{\Bbb L^1}\,ds\to 0$ 
since $BV(\mathcal O)\hookrightarrow\hookrightarrow\Bbb L^1\hookrightarrow\Bbb H^{-d}$. 
If $d=1$ then even $\int_0^T\|f_k-f\|_{\Bbb L^2}\,ds\to 0$ since $BV(\mathcal O)\hookrightarrow\hookrightarrow\Bbb L^2\hookrightarrow\Bbb H^{-d}$. Thus there exists a subsequence $k_l$ such that $f_{k_l}(s)\to f(s)$ a.e. on $\mathcal O$ (or in $\Bbb L^2$ if $d=1$) for a.e. $s\in[0,T]$. In particular, {$\|B(f_{k_l}(s))-B(f(s))\|_{\mathscr L_2(\ell_2,\Bbb L^2)}\to 0$} for a.s. $s\in[0,T]$, and the linear growth of $B$ then yields that $\int_0^T\|B(f_{k_l})-B(f)\|^2_{\mathscr L_2(\ell_2,\Bbb L^2)}\,\dd s\to 0$ which is a contradiction with \eqref{mean_vonv_b}. Finally,
\begin{align*}
\|I_s(Z_k)\|_{\Bbb H^1_0}&\le c+c\sum_{j=1}^{j_0}\|W_{\tau_k}^j\|_{C([0,T])}\,,
\\
|\mathcal J_{\varepsilon_k}(P_{h_k}(I_s(Z_k)))|&\le c[1+\|I_s(Z_k)\|^2_{\Bbb H^1_0}]\,,
\end{align*}
holds by stability of the projections $\{P_h\}_{h> 0}$ in $\Bbb H^1_0$ so \eqref{unif_int_fk} is satisfied by \eqref{donsker_besov}, Lemma \ref{un_est} and the linear growth of $B$. Hence 
on taking the limit $k\rightarrow \infty$ in (\ref{disc5}) we conclude by Lemma \ref{cont_meas} that \eqref{SVI_def} holds.

\medskip

{\bf(iii)} In the last step, we prove the full result. The extension of \eqref{SVI_def} to $(\mathcal Z^\mu_t)$-progressively measurable processes in $L^2([0,T]\times\Omega;\Bbb H^1_0)$ and $L^2([0,T]\times\Omega;\mathscr L_2(\ell_2,\Bbb H^1_0))$ goes via a standard density argument, and the general case can be obtained by considering $G_h=P_hG$ and $H_h=P_hH$, and then letting $h\to 0$.
\end{proof}

{\section{Convergence to pathwise unique probabilistically strong solution}\label{CoStSo}

In this section we study convergence of the interpolants $\overline{X}_{\varepsilon,h,\tau}$, $\underline{X}_{\varepsilon,h,\tau}$ and $X_{\varepsilon,h,\tau}$ to 
a probabilistically strong SVI solution of \eqref{TVF} in probability.

\begin{thm}\label{strong_exist}
Let $(W^j)$ be independent $(\mathcal F_t)$-Wiener processes on $(\Omega,\mathcal F,(\mathcal F_t),\Bbb P)$ and let
$$
\xi_\tau^{i,j}=W^j(t_i)-W^j(t_{i-1}),\qquad t_i=i\tau.
$$
Assume also that pathwise uniqueness holds for the SVI solutions of \eqref{TVF} satisfying \eqref{energ_eq}. Then $\overline{X}_{\varepsilon,h,\tau}$, $\underline{X}_{\varepsilon,h,\tau}$ and $X_{\varepsilon,h,\tau}$ converge to $X$ in probability in $Q_{c,BV}([0,T];\Bbb L^2_w)$, $Q_c([0,T];\Bbb L^2_w)$ and $C([0,T];\Bbb L^2_w)$ respectively {where $X$ is a solution \eqref{TVF} with respect to $(W^j)_{j\in\mathbb{N}}$}.
\end{thm}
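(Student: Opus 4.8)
The plan is to deduce the statement from the construction of Sections~\ref{sec_conv}--\ref{CoMaSo} combined with the Gy\"ongy--Krylov characterization of convergence in probability: for random variables valued in a completely regular space whose relevant compact sets are metrizable (which is our situation by Proposition~\ref{tight_part} and Remark~\ref{rem_qps}), a net converges in probability precisely when, out of any two subsequences, one can extract a common further subsequence along which the \emph{joint} laws converge weakly to a Radon measure supported on the diagonal. Since under the present hypotheses all the interpolants live on the single, given stochastic basis $(\Omega,\mathcal F,(\mathcal F_t),\Bbb P)$ — the $\xi^{i,j}_\tau$ being genuine increments of the fixed Wiener processes $W^j$ — I would fix two sequences $(\varepsilon_k,h_k,\tau_k)\to(0,0,0)$ and $(\varepsilon_k',h_k',\tau_k')\to(0,0,0)$ and work with the joint variables
$$
\widetilde Z_k=\big(\overline{X}_{\varepsilon_k,h_k,\tau_k},\underline{X}_{\varepsilon_k,h_k,\tau_k},X_{\varepsilon_k,h_k,\tau_k},\overline{X}_{\varepsilon_k',h_k',\tau_k'},\underline{X}_{\varepsilon_k',h_k',\tau_k'},X_{\varepsilon_k',h_k',\tau_k'},W^1,W^2,\dots\big),
$$
taking values in the product of two copies of the first three factors of $\mathbf Z$ with a single copy of the sequence of $C([0,T])$-factors (the Wiener coordinate being shared). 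By Proposition~\ref{tight_part}, Corollary~\ref{main_cor} and the triviality of tightness for the fixed $W^j$, the laws of $\widetilde Z_k$ are tight, and by Theorem~\ref{prokhorov} I pass to a subsequence converging to a Radon measure $\widetilde\mu$.

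Next I would repeat the identification of Corollary~\ref{first_prop} and Theorem~\ref{main_full_1} \emph{simultaneously} for the two triples of coordinate processes $S^1,S^2,S^3$ and $\widehat S^1,\widehat S^2,\widehat S^3$ on the limit space, using that the a priori estimates of Lemmas~\ref{lem_energy}, \ref{lem_incr}, \ref{lem_3}, \ref{un_est}, \ref{mod_cont_m1} hold for each family with parameter-independent constants and that Lemma~\ref{cont_meas} carries over verbatim to the enlarged product space. This gives: the common $W^j$ are independent $(\mathcal Z_t^{\widetilde\mu})$-Brownian motions; $\widetilde\mu(S^1=S^2=S^3)=\widetilde\mu(\widehat S^1=\widehat S^2=\widehat S^3)=1$; both $S^3$ and $\widehat S^3$ are SVI solutions of \eqref{TVF} on $(\,\cdot\,,\mathscr B(\,\cdot\,),(\mathcal Z_t^{\widetilde\mu}),\widetilde\mu)$ driven by the \emph{same} Wiener processes $(W^j)$, with $S^3(0)=\widehat S^3(0)=x^0$; and both satisfy \eqref{energ_eq} with the universal constant. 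Invoking the assumed pathwise uniqueness then forces $S^3=\widehat S^3$ $\widetilde\mu$-a.s., so $\widetilde\mu$ is concentrated on the diagonal.

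By the Gy\"ongy--Krylov principle this produces a limit process $X$ on $\Omega$ with $\overline{X}_{\varepsilon,h,\tau},\underline{X}_{\varepsilon,h,\tau},X_{\varepsilon,h,\tau}\to X$ in probability in $Q_{c,BV}([0,T];\Bbb L^2_w)$, $Q_c([0,T];\Bbb L^2_w)$ and $C([0,T];\Bbb L^2_w)$ respectively as $(\varepsilon,h,\tau)\to(0,0,0)$ (the three limits coinciding because $\widetilde\mu$ also sits on $\{S^1=S^2=S^3\}$, cf.\ Theorem~\ref{main_thm}). To finish I would check that $X$ is an SVI solution with respect to $(W^j)$: it is adapted to the $\Bbb P$-augmentation of $(\mathcal F_t)$ as an almost-sure subsequential limit of the adapted interpolants, and the joint law of $(X,X,X,W^1,W^2,\dots)$ on $\mathbf Z$ is the weak limit of the laws of $Z_{\varepsilon,h,\tau}$, which by Theorem~\ref{main_full_1} is the law of an SVI solution driven by $(W^j)$; since \eqref{SVI_def} is an inequality between expectations of functionals of the solution and of the test processes $I$ (built from $(\mathcal Z_t)$-progressively measurable $G,H$ and stochastic integrals against the $W^j$), it depends only on that joint law, so $X$ inherits the SVI property and \eqref{energ_eq}. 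Equivalently, one re-runs steps \textbf{(i)}--\textbf{(iii)} of the proof of Theorem~\ref{main_full_1} along a $\mu$-a.s.\ convergent subsequence directly on $\Omega$.

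The step I expect to be the main obstacle is the simultaneous identification, in the joint subsequential limit, of \emph{both} triples as SVI solutions driven by \emph{one and the same} Wiener process — this is exactly what makes pathwise uniqueness applicable and pins $\widetilde\mu$ to the diagonal. Concretely it requires re-running the construction of Theorem~\ref{main_full_1} on the enlarged product while tracking the shared noise coordinate and verifying, for each family, progressive measurability, the Brownian character of the common $W^j$, and the bound \eqref{energ_eq}; this is a routine but bookkeeping-heavy adaptation, since the a priori estimates of Section~\ref{sec_est} and the semicontinuity machinery of Section~\ref{SLCS} are insensitive to the choice of discretization sequence.
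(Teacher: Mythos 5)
Your proposal is correct and follows essentially the same route as the paper: a Gy\"ongy--Krylov argument on the joint laws of two discretization sequences sharing the given Wiener processes, simultaneous identification of both subsequential limits as SVI solutions satisfying \eqref{energ_eq}, pathwise uniqueness to concentrate the limit law on the diagonal, and a final passage to the limit on $(\Omega,\mathcal F,(\mathcal F_t),\Bbb P)$ to verify that the limit is an SVI solution with respect to $(W^j)$. The only detail the paper singles out that you fold into ``routine bookkeeping'' is that, on the original filtered space with two different time grids, the test data must be built from the coordinate variables $V^{\alpha-1}$ rather than $V^{\alpha}$ (a backward time shift) so that $G$ and $H$ remain $(\mathcal F_t)$-adapted once the step sizes fall below the mesh of $\{s_\alpha\}$.
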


\begin{proof}
The proof is based on the Gyongy-Krylov Lemma 1.1 in \cite{Gyongy_Krylov}. Define
$$
\mathbf S=Q_{c,BV}([0,T];\Bbb L^2_w)\times Q_{c,BV}([0,T];\Bbb L^2_w)\times C([0,T])\times C([0,T])\times C([0,T])\times C([0,T])\times\dots,
$$
and the projections
\begin{align*}
\nonumber
&Y^1:\mathbf S\to Q_{c,BV}([0,T];\Bbb L^2_w)&(f^1,f^2,w^1,w^2,w^3,\dots)\mapsto f^1,
\\ \nonumber
&Y^2:\mathbf S\to Q_{c,BV}([0,T];\Bbb L^2_w)&(f^1,f^2,w^1,w^2,w^3,\dots)\mapsto f^2,
\\ 
&W^j:\mathbf S\to C([0,T])&(f^1,f^2,w^1,w^2,w^3,\dots)\mapsto w^j,
\end{align*}
and the canonical filtration on $\mathbf S$
$$
\mathcal S_t=\sigma(Y^1_s,Y^2_s,W^j_s:\,s\in[0,t],\,j\in\Bbb N),\qquad t\in[0,T].
$$
We consider two different sequences of discretization parameters $(\varepsilon^i_k,h^i_k,\tau^i_k)\to(0,0,0)$ for $i=1,2$, which are chosen as in Corollary~\ref{main_cor}, 
such that 
$$
Z_k:=(\overline{X}_{\varepsilon^1_k,h^1_k,\tau^1_k},\overline{X}_{\varepsilon^2_k,h^2_k,\tau^2_k},W^1,W^2,W^3,\dots),
$$
converge to a Radon probability measure $\theta$ on $\mathscr B(\mathbf S)$. Analogically as in Corollary \ref{first_prop}, the processes $Y^1$, $Y^2$ and $\Wproc$ are $(\mathcal S_t)$-progressively measurable, paths of $Y^1$ and $Y^2$ are continuous $\theta$-a.s.,
$$
\int_{\mathbf S}\left[\sup_{t\in[0,T]}\|Y^i(t)\|^4+\left(\int_0^T\|Y^i(t)\|_{BV(\mathcal O)}\,\dd t\right)^2\right]\,\dd\theta<\infty\,,\qquad i=1,2,
$$
the $\sigma$-algebras $\mathcal S_t$ and $\sigma(W^j(b)-W^j(a):\,t\le a\le b\le T,\,j\in\Bbb N)$ are $\theta$-independent and $W^1,W^2,W^3,\dots$ are $\theta$-independent $(\mathcal S_t)$-Brownian motions. The proof that $Y^1$ and $Y^2$ are SVI solutions with respect to $\Wproc$ and 
$$
\theta\,[Y^1(0)=Y^2(0)=x^0]=1
$$
is analogous to the proof of Theorem \ref{main_full_1}, we point out the differences below. 

In step {\bf(i)} one modifies the definition of the $\Bbb R^{3M^2}$-valued random variables
$$
V^\alpha=((\varphi_\beta, Y^1_{r^{\alpha}_\gamma}),(\varphi_\beta,Y^2_{r^{\alpha}_\gamma}),W^j_{r^{\alpha}_\gamma}:\beta,\gamma,j\in\{1,\dots,M\}),\quad 0\le\alpha\le m\,,
$$
defined on $\mathbf S$, the functions $g_\alpha,h_{\alpha,j}$ map $\Bbb R^{3M^2}$ to $\Bbb H^1_0$ and have the same properties as in the proof of Theorem \ref{main_full_1} and
$$
G(t)=\sum_{\alpha=1}^{m-1}\mathbf 1_{(s_\alpha,s_{\alpha+1}]}(t)g_\alpha({V^{\alpha-1}}),\qquad H_j(t)=\sum_{\alpha=1}^{m-1}\mathbf 1_{(s_\alpha,s_{\alpha+1}]}(t)h_{\alpha,j}({V^{\alpha-1}})\,,
$$
i.e., there is a backward time shift compared to the definition of $G$ and $H$ in the proof of Theorem \ref{main_full_1}. 
Once we set we set $N^i_k=T/\tau^i_k$, $t^i_\ell:=\ell\tau^i_k$ for $\ell\in\{0,\dots,N^i_k\}$, $i=1,2$  the above modification
ensures that $V^{\alpha-1}(Z_k)$ is $\mathcal F_{s_\alpha}$-measurable. Consequently, $G(t,Z_k)$ and $H_j(t,Z_k)$ are $(\mathcal F_t)$-adapted processes as long as $\tau^i_k$, $i=1,2$ are smaller than the mesh of the partition $\{s_\alpha\}$.

Pathwise uniqueness of solutions of \eqref{TVF} yields that $Y^1=Y^2$ holds $\mathbb{P}$-a.s. hence $\overline X_{\varepsilon,h,\tau}$ is convergent in $Q_{c,BV}([0,T];\Bbb L^2_w)$ in probability as $(\varepsilon,h,\tau)\to (0,0,0)$
by \cite[Theorem 2.10.3]{Breit_Feireisl_Hofmanova} (with the exception that we apply the Gyongy-Krylov lemma directly without having to pass to a subsequence as in \cite[Theorem 2.10.3]{Breit_Feireisl_Hofmanova}).

Now we apply the Gyongy-Krylov lemma once again. By Corollary \ref{main_cor} we deduce that the laws of the sequence 
$$
(\overline{X}_{\varepsilon^1_k,h^1_k,\tau^1_k},\underline{X}_{\varepsilon^1_k,h^1_k,\tau^1_k},X_{\varepsilon^1_k,h^1_k,\tau^1_k},\overline{X}_{\varepsilon^2_k,h^2_k,\tau^2_k},\underline{X}_{\varepsilon^2_k,h^2_k,\tau^2_k},X_{\varepsilon^2_k,h^2_k,\tau^2_k}),
$$
on 
$$
\mathscr B(Q_{c,BV}\times Q_c\times C\times Q_{c,BV}\times Q_c\times C),
$$
where $Q_{c,BV}=Q_{c,BV}([0,T];\Bbb L^2_w)$, $Q_c=Q_c([0,T];\Bbb L^2_w))$ and $C=C([0,T];\Bbb L^2_w)$ converge to some probability measure $\nu$. Consequently
$$
\nu\{x_1=x_2=x_3,\,x_4=x_5=x_6,\,x_1=x_4\}=1,
$$
by Corollary \ref{first_prop} (II) and the first part of the proof. Hence \cite[Theorem 2.10.3]{Breit_Feireisl_Hofmanova} yields that $\underline{X}_{\varepsilon,h,\tau}$ and $X_{\varepsilon,h,\tau}$ converge in probability in $Q_c([0,T];\Bbb L^2_w))$ and $C([0,T];\Bbb L^2_w)$ respectively as $(\varepsilon,h,\tau)\to(0,0,0)$. And the limit equals to $X$ by \eqref{un_est_3}.

{Analogously as in the proof of Theorem \ref{main_full_1} we set 
\begin{equation}\label{def_G_H_2}
G(t)=\sum_{\alpha=0}^{m-1}\mathbf 1_{(s_\alpha,s_{\alpha+1}]}(t)g_\alpha \qquad 
H_j(t)=\sum_{\alpha=0}^{m-1}\mathbf 1_{(s_\alpha,s_{\alpha+1}]}(t)h_{\alpha,j},
\end{equation}
for some $0=s_0<\dots<s_m=T$ where $g_\alpha$ and $h_{\alpha,j}$ are simple $\Bbb H^1_0$-valued $\mathcal F_{s_\alpha}$-measurable random variables such that $h_{\alpha,j}=0$ for $j\ge j_0$ for some arbitrary $j_0\in\Bbb N$ and define the process $I$ as in \eqref{def_I}. Setting $N=T/\tau$, $t_i:=i\tau$ for $i\in\{0,\dots,N\}$ then, as in \eqref{disc5} in the proof of Theorem \ref{main_full_1} we obtain that
\begin{align*}
\frac{1}{2}\Bbb E[\|\overline X_\tau(t_i)&-P_hI(t_i)\|^2]+\Bbb E\left[\int_0^{t_i}\mathcal J(\overline X_\tau(s))\,\dd s\right]\le\frac{1}{2}\|x^0-u^0\|^2
\\
&+\Bbb E\left[\int_0^{t_i}[\mathcal J_\varepsilon(P_hI(s))+(P_hG(s),\overline X_\tau(s)-I(s))]\,\dd s\right]
\\
&+\frac{1}{2}\Bbb E\left[\int^{t_i}_0\|B(\overline X_\tau(s))-H(s)\|_{\mathscr L_2(\ell_2,\Bbb L^2)}^2\,\dd s\right]+c\tau^{\frac{\theta}{2}},
\end{align*}
for $0\le i\le N$ if $N\ge j_0$. If $0\le t\le t_i<t+\tau$ then
\begin{align*}
\frac{1}{2}\Bbb E[\|\overline X_\tau(t_i)&-P_hI(t_i)\|^2]+\Bbb E\left[\int_0^t\mathcal J(\overline X_\tau(s))\,\dd s\right]\le\frac{1}{2}\|x^0-u^0\|^2
\\
&+\Bbb E\left[\int_0^t[\mathcal J_\varepsilon(P_hI(s))+(P_hG(s),\overline X_\tau(s)-I(s))]\,\dd s\right]
\\
&+\frac{1}{2}\Bbb E\left[\int^t_0\|B(\overline X_\tau(s))-H(s)\|_{\mathscr L_2(\ell_2,\Bbb L^2)}^2\,\dd s\right]+c\tau^{\frac{\theta}{2}}+c_1\tau.
\end{align*}
We deduce that the following holds for $(\varepsilon,h,\tau)\to(0,0,0)$:
\begin{itemize}
\item $\overline X_\tau(t_i)-P_hI(t_i)$ is tight in $\Bbb L^2_w$ and converges to $X(t)-I(t)$ in $\Bbb L^2_w$ in probability (hence also in law) thus
$$
\Bbb E[\|\overline X(t)-(t)\|^2]\le\liminf\Bbb E[\|\overline X_\tau(t_i)-P_hI(t_i)\|^2],
$$
by Proposition \ref{portmanteau},
\item $\overline X_\tau$ converges to $X$ in $Q_{c,BV}([0,T];\Bbb L^2_w)$ in probability (hence also in law) thus
$$
\Bbb E\left[\int_0^t\mathcal J(X(s))\,\dd s\right]\le\liminf\Bbb E\left[\int_0^t\mathcal J(\overline X_\tau(s))\,\dd s\right],
$$
as in the proof of Theorem \ref{main_full_1},
\item 
$$
\Bbb E\left[\int_0^t\mathcal J(I(s))\,\dd s\right]=\lim\Bbb E\left[\int_0^t\mathcal J_\varepsilon(P_hI(s))\,\dd s\right],
$$
as in the proof of Theorem \ref{main_full_1},
\item
$$
\Bbb E\left[\int_0^T|(P_hG(s)-G(s),\overline X_\tau(s)-I(s))|\,\dd s\right]\le C\Bbb E\left[\int_0^T\|P_hG(s)-G(s)\|^2\,\dd s\right]\to 0\,,
$$
\item
\begin{align*}
\Bbb E\left[\int_0^T|(G(s),\overline X_\tau(s)-X(s))|\,\dd s\right]&=\sum_{\alpha=0}^{m-1}\Bbb E\left[\int_{s_{\alpha}}^{s_{\alpha+1}}|(g_\alpha,\overline X_\tau(s)-X(s))|\,\dd s\right]
\\
&\le T\sum_{\alpha=0}^{m-1}\Bbb E\left[\sup_{s\in[0,T]}|(g_\alpha,\overline X_\tau(s)-X(s))|\right]\to 0,
\end{align*}
since $g_\alpha$ are simple,
\item we proved in the proof of Theorem \ref{main_full_1} that if $f^j_n\to f^j$ in $Q_{c,BV([0,T];\Bbb L^2_w)}$ and 
$$
\int_0^T\|f^j_n(s)\|_{BV(\mathcal O)}\,\dd s\le C,\qquad j=1,2,
$$ 
then
$$
\int_0^T\|B(f^1_n)-B(f^2_n)\|_{\mathscr L_2(\ell_2,\Bbb L^2)}^2\,\dd s\to\int_0^T\|B(f^1)-B(f^2)\|_{\mathscr L_2(\ell_2,\Bbb L^2)}^2\,\dd s.
$$
Now  $(\overline X_\tau,X)$ are tight in $Q_{c,BV([0,T];\Bbb L^2_w)}\times Q_{c,BV([0,T];\Bbb L^2_w)}$ and converge in probability (hence in law) to $(X,X)$. By Proposition~\ref{portmanteau}
we deduce
$$
\lim\Bbb E\left[\int^T_0\|B(\overline X_\tau(s))-B(X(s))\|_{\mathscr L_2(\ell_2,\Bbb L^2)}^2\,\dd s\right]=\Bbb E\left[\int^T_0\|B(X(s))-B(X(s))\|_{\mathscr L_2(\ell_2,\Bbb L^2)}^2\,\dd s\right].
$$
\end{itemize}
Hence, we obtain as in {\bf (ii)} in the proof of Theorem \ref{main_full_1} that
\begin{align*}
\frac{1}{2}\Bbb E[\|X(t)&-I(t)\|^2]+\Bbb E\left[\int_0^t\mathcal J(X(s))\,\dd s\right]\le\frac{1}{2}\|x^0-u^0\|^2
\\
&+\Bbb E\left[\int_0^t[\mathcal J(I(s))+(G(s),X(s)-I(s))]\,\dd s\right]
\\
&+\frac{1}{2}\Bbb E\left[\int^t_0\|B(X(s))-H(s)\|_{\mathscr L_2(\ell_2,\Bbb L^2)}^2\,\dd s\right]\,.
\end{align*}

The extension to general $G$, $H$ and $I$ is analogous to {\bf (iii)} in the proof of Theorem \ref{main_full_1}.
}

\end{proof}
}
}

\section{Numerical experiments}\label{sec_numexp}

We perform numerical experiments using a generalization of the fully discrete finite element scheme (\ref{num_tvf})
with $\mathcal{O} = (0,1)^2$. We consider a triangulation $\mathcal{T}_h$ of $\mathcal{O}$ for $h=2^{-\ell}$ 
which is obtained by subdividing the unit square into sub-squares of size $h$ and subsequently each square is subdivided into four equal right-angled triangles.
Given $\mathbb{V}_h \equiv \mathbb{V}_h(\mathcal{T}_h) = \mathrm{span}\{\phi_j, \,\, j=1,\dots,J\}$ and a constant $\sigma>0$ we set $B_j = \sigma\phi_j$ and denote $\Delta_i W_h = \sum_{j=1}^J \phi_j \Delta_i \beta_j$ with
discrete increments $\Delta_i \beta_j := \beta_j(t_i)-\beta_j(t_{i-1})$ and
where ${\beta}_j$, $j=1,\dots, J$ are independent scalar-valued Wiener processes.

The corresponding counterpart of the scheme (\ref{num_tvf}) for $i=1, \dots, N$ then reads as
\begin{align}\label{fem_scheme}
\ska{X^i_{\varepsilon,h},\vh} =& \ska{X^{i-1}_{\varepsilon,h},\vh}-\tau \ska{\frac{\nabla X^i_{\varepsilon,h}}{\sqrt{|\nabla X^i_{\varepsilon,h}|^2+\eps^2}},\nabla\vh } \nonumber \\
 & -\tau\lambda\ska{X^i_{\varepsilon,h} - g_h,\vh}+ \sigma \ska{ {\Delta_i W_h},\vh} &&\forall \vh \in \mathbb{V}_h\,, 
\\ \nonumber
X^0_{\varepsilon,h}  = & x^0_h\,.
\end{align}
where $g_h,\,x^0_h\in\widetilde{\mathbb{V}}_h\subset \mathbb{V}_h$ are suitable approximations (see below) of the data $g$, $x_0$, respectively.

For comparison we also perform simulations using a non-conforming variant of (\ref{fem_scheme})
where the ($\mathbb{H}^1$-conforming) space $\mathbb{V}_h$ in (\ref{fem_scheme}) is replaced by a non-conforming finite element space $\mathbb{V}_{\mathrm{cr}}\not\subset \mathbb{H}^{1}_0$.
Given a partition $\mathcal{T}_h$ of $\mathcal{O}$ we denote the set of all faces of elements $T\in\mathcal{T}_h$ as $\mathcal{S}_h = \cup_{T\in\mathcal{T}_h}\partial T$
and for a face $S\in \mathcal{S}_h$ we denote its barycenter by $b_S$.
Then we define the non-conforming finite element space as
\begin{align*}
\mathbb{V}_{\mathrm{cr}} = \big\{& \varphi\in \mathbb{L}^2;\,\, \varphi|_T \in \mathcal{P}^1(T) \,\, \forall{T}\in\mathcal{T}_h,\,\,\varphi \text{ is continuous at } b_S\,\, 
\forall S\in\mathcal{S}_h\cap\mathcal{O}
\\ 
& \text{ and } { \varphi(b_S) = 0 \text{ for } S\in\mathcal{S}_h\cap\partial\mathcal{O} }\big\}\,.
\end{align*}
The above finite element space corresponds to the first order Crouzeix-Raviart finite element which is more suitable for the approximation
of discontinuous solutions, cf., \cite{bartels_cr} and the references therein, for its use in the context of image processing.
We note that $\mathbb{V}_{h}(\mathcal{T}_h) \subset \mathbb{V}_{\mathrm{cr}}(\mathcal{T}_h)$ but since $\mathbb{V}_{\mathrm{cr}} \not\subset \mathbb{H}^{1}_0$
the  elements of $\mathbb{V}_{\mathrm{cr}}$ have no (global) weak gradients in general.
Hence for $w_h \in \mathbb{V}_{\mathrm{cr}}$ we define a discrete gradient $\nabla_h w_h$ 
via $\nabla_h w_h = \nabla (w_h|_T)$. Then the non-conforming counterpart of the scheme (\ref{num_tvf}) is obtained by replacing $\mathbb{V}_h$
with $\mathbb{V}_{\mathrm{cr}}$ and the gradients $\nabla$ in (\ref{num_tvf}) by the discrete gradient $\nabla_h$. 
The numerical solutions $X^i_{\varepsilon,h}\in \mathbb{V}_{\mathrm{cr}}$, $i=1,\dots, N$ the exist and satisfy an energy law (counterpart of Lemma~\ref{lem_energy},
however, the convergence of the non-conforming scheme is open so far.


To construct an approximation of the data $g$, $x_0$ we consider the space $\widetilde{\mathbb{V}}_h\equiv \mathbb{V}_h(\widetilde{\mathcal{T}}_h)$ with fixed mesh size $h=2^{-6}$.
We define the exact ``image'' $\tilde{g}_h\in \widetilde{\mathbb{V}}_h$ as the composition of the characteristic function of a square with side $\frac{1}{2}$ at the center of $\mathcal{O}$
scaled by the factor $\frac{1}{2}$ and the characteristic function of a circle with radius $\frac{1}{4}$ shifted by $0.2$ to the right of the center of $\mathcal{O}$
interpolated on the mesh $\widetilde{\mathcal{T}}_h$, see Figure~\ref{fig_data} (left), i.e., $\displaystyle \tilde{g}_h(x) = \sum_{j=1}^{\tilde J} \tilde{g}(x_j) \tilde\phi_j (x)$ 
where $\{\tilde\phi_j\}_{j=1}^{\tilde J}$  are the nodal basis functions associated with the nodes the $\{x_j\}_{j=1}^{\tilde J}$ of the mesh $\widetilde{\mathcal{T}}_h$.
Hence, we set $g_h = \tilde{g}_h + \xi_h\in \widetilde{\mathbb{V}}_h$ with the ``noise'' $\displaystyle \xi_h(x) = 0.1\sum_{j=1}^{\tilde J} \tilde\phi_j (x) \xi_\ell$, $x\in\mathcal{O}$
where $\xi_j$, $j=1,\dots, \tilde J$ are realizations of independent $\mathcal{U}(-1,1)$-distributed random variables.
The corresponding realization of the noise $\xi_h$ and the resulting ``noisy image'' $g_h$ are displayed in Figure~\ref{fig_data} (middle and right, respectively).

\begin{figure}[!htp]
\center
\includegraphics[width=0.3\textwidth]{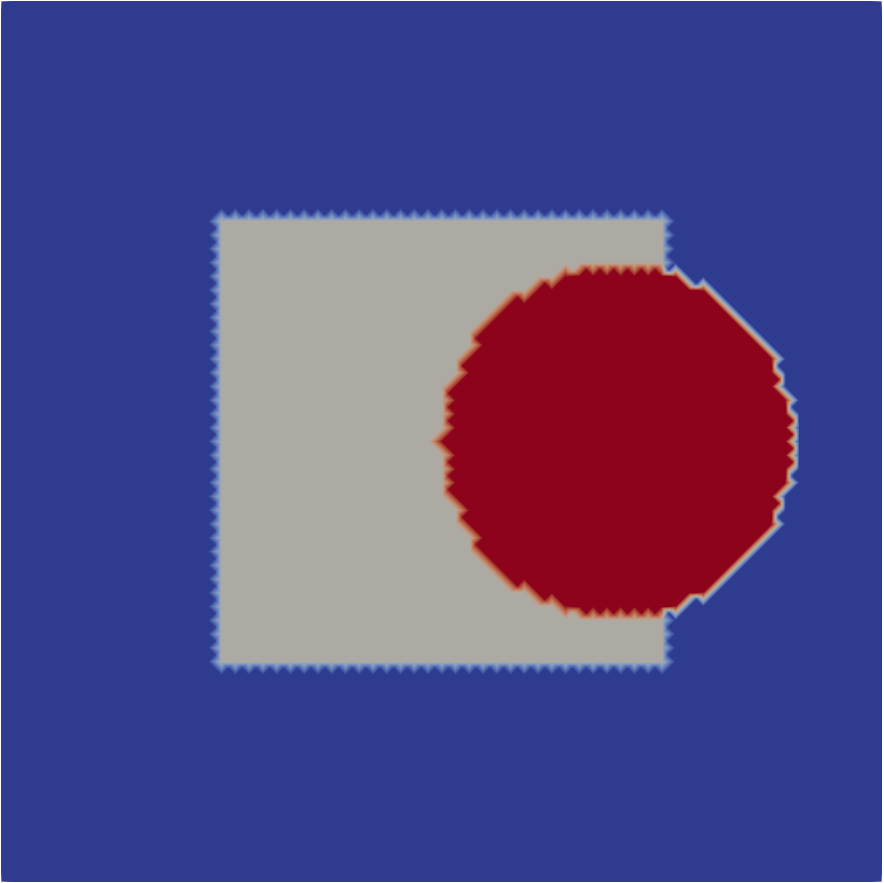}
\includegraphics[width=0.3\textwidth]{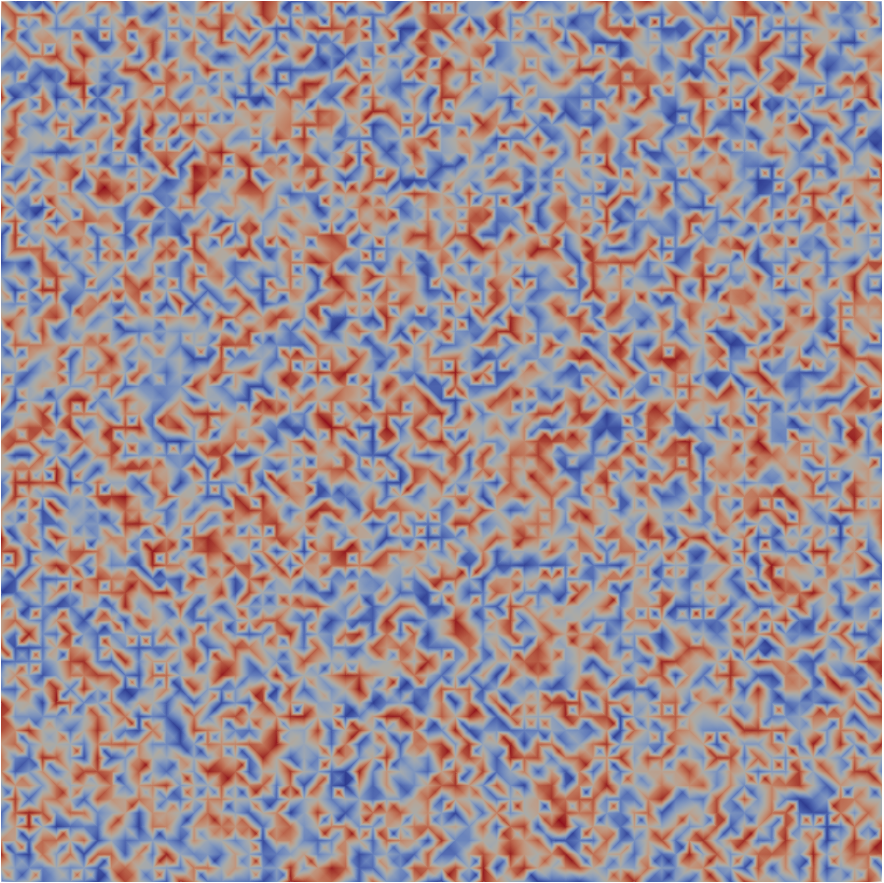}
\includegraphics[width=0.3\textwidth]{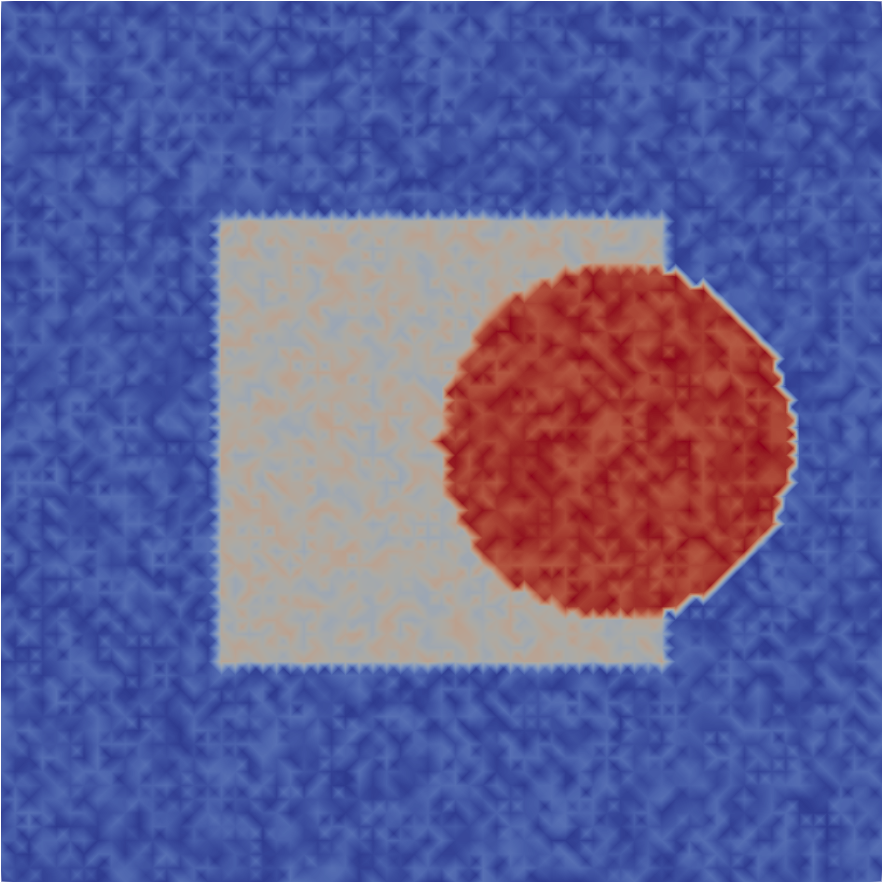}
\caption{The original image $\tilde{g}_h$ (left), the noise $\xi_h$ (middle) and the noisy image $g_h$ (right).}
\label{fig_data}
\end{figure}

In all experiments we set $T=0.1$, $\lambda=200$, $\eps=10^{-4}$, $x^0_h = \tilde{g}_h$.
The nonlinear algebraic system which corresponds to (\ref{fem_scheme}) is solved using
a simple fixed-point iterative scheme with tolerance $10^{-4}$.
If not mentioned otherwise we use the time step $\tau = 10^{-3}$, the mesh size $h = 2^{-6}$ and $\sigma=1$.

The time-evolution of the discrete energy functional $\mathcal{J}_{\eps}$
for one realization of the space-time noise $W_h$ is displayed in Figure~\ref{fig_ener} (left); $\mathrm{P1}$ denotes the solution with the conforming finite element approximation
and $\mathrm{CR}$ denotes the non-conforming approximation, $\mathrm{h7}$, $\mathrm{h8} $ respectively denote the solution with mesh size $h=2^{-7},\,\, 2^{-8}$ and $det$ stands for the deterministic solution with $\sigma=0$.
The evolution of the approximation error of the original image $\tilde{g}_h$ is displayed in Figure~\ref{fig_ener} (left).
We make the following observations for the conforming finite element method: the approximation error for $\sigma=0$ improves with decreasing mesh size,
and the approximation error of the stochastic problem oscillates around the error of the deterministic counterpart.
For the non-conforming approximation we measure the approximation error of the projected discrete solution $\Pi_h^0 X_{\eps,h}^i$, $i=1,\dots$,
where $\Pi_h^0$ is the projection onto piecewise constant functions on $\mathcal{T}_h$, see Figure~\ref{fig_finalu} where we also display the 
solution of the conforming finite element scheme.
As expected, cf. \cite{bartels_cr}, on the same mesh with $\sigma=0$ the non-conforming finite element method yields a better approximation of the original image
then the conforming method. The non-conforming approximation requires roughly $3\times$ more degrees of freedom than the conforming one
but the approximation is still comparable to the conforming method with smaller mesh size $h=2^{-7}$ (which involves $4\times$ more degrees of freedom than the approximation with $h=2^{-6}$).
Nevertheless, we also observe that the non-conforming approximation is more sensitive to the noise.
For comparison in Figure~\ref{fig_finalu_pw} we display the piecewise constant projections of the solutions computed with the conforming scheme with $h=2^{-6}$ and $h=2^{-8}$.

\begin{figure}[!htp]
\center
\includegraphics[width=0.49\textwidth]{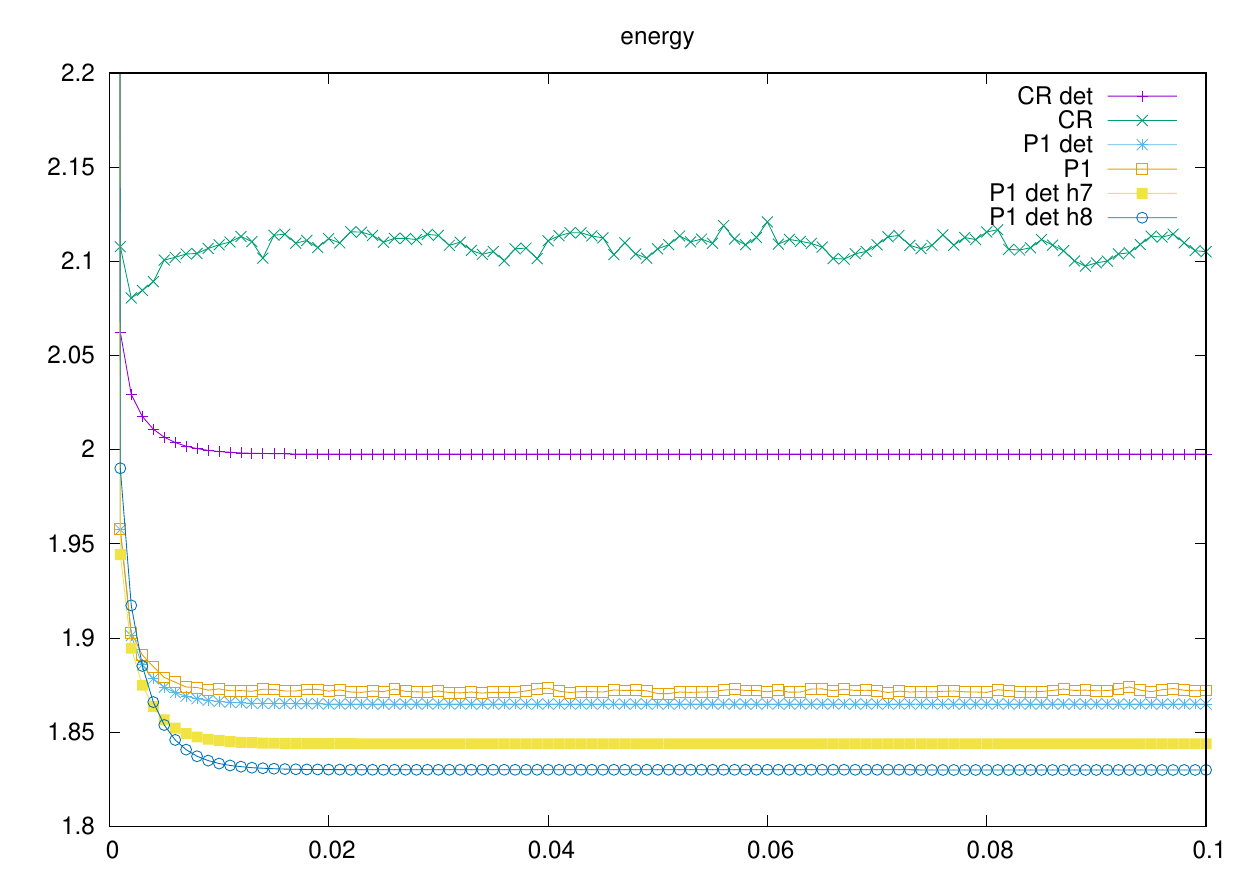}
\includegraphics[width=0.49\textwidth]{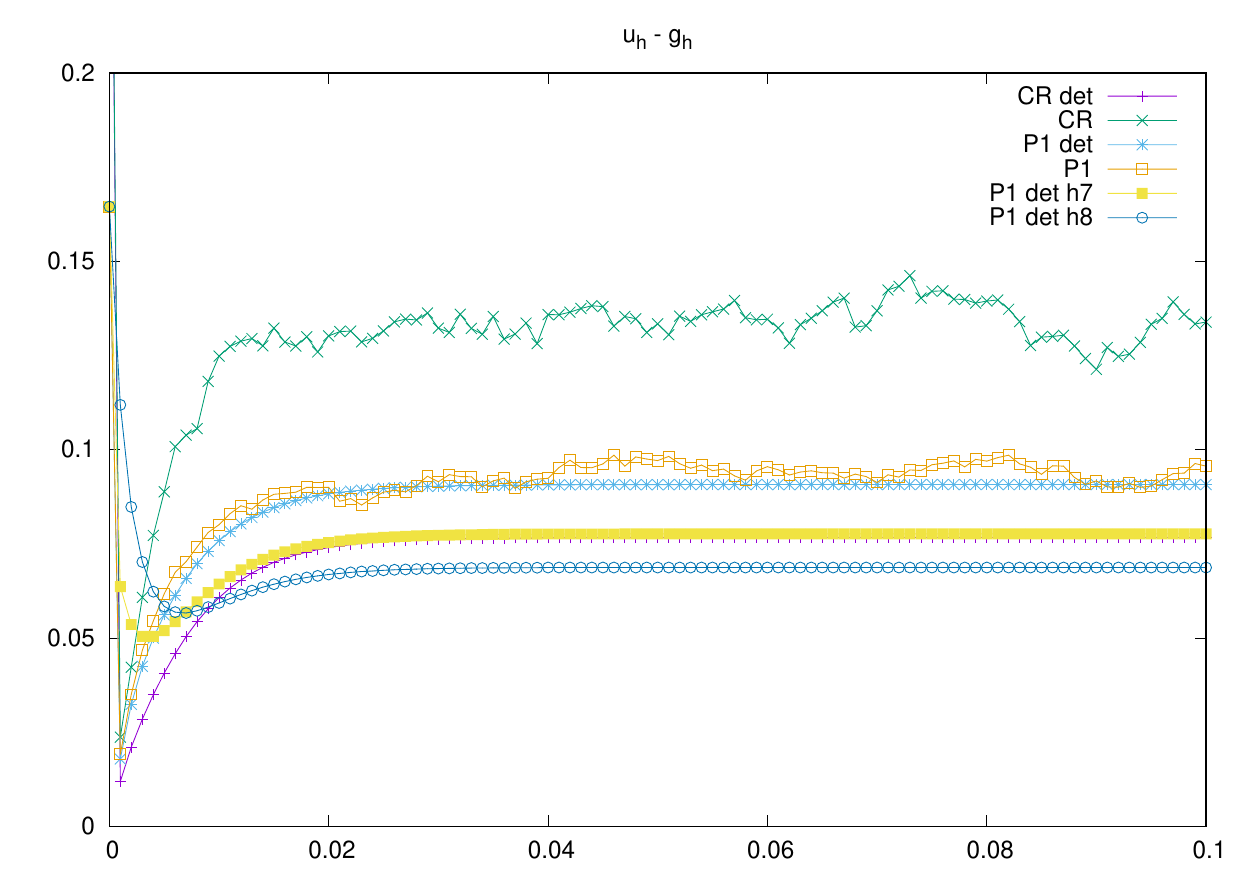}
\caption{Evolution of the discrete energy  (left) and evolution of the discrete error $t_i\rightarrow \frac{\lambda}{2}\|X_{\varepsilon,h}^i - \tilde{g}_h\|^2$ (right).
}
\label{fig_ener}
\end{figure}

\begin{figure}[!htp]
\center
\includegraphics[width=0.49\textwidth]{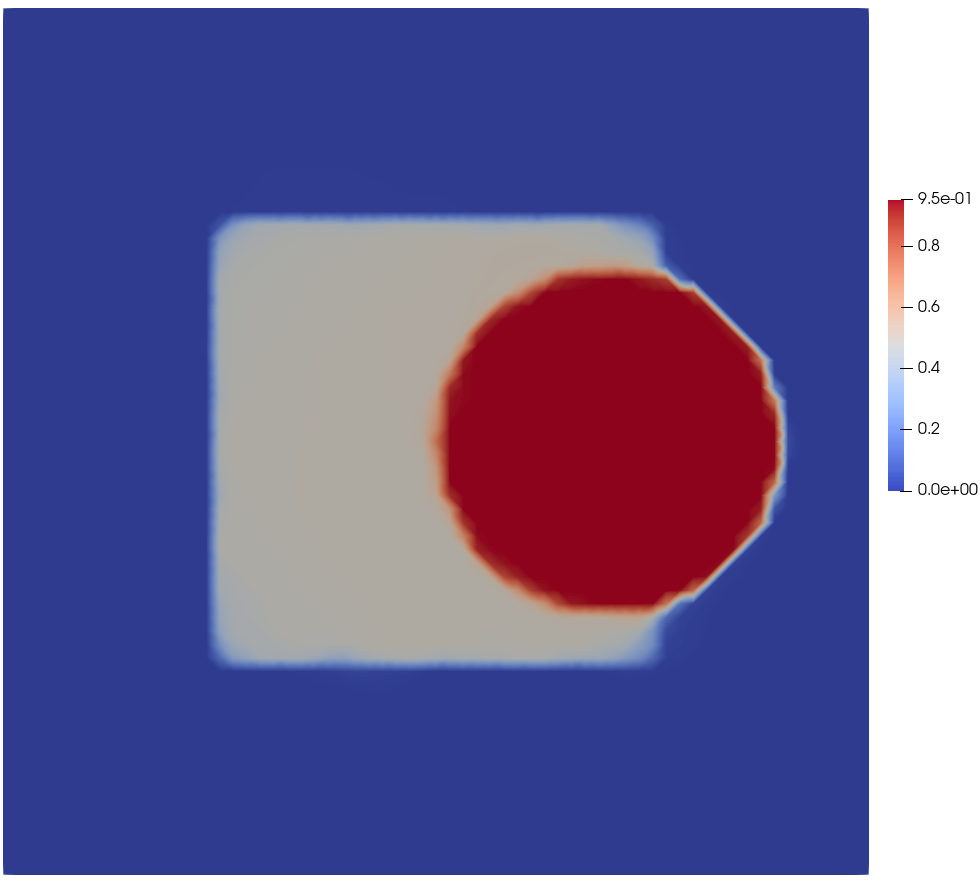}
\includegraphics[width=0.49\textwidth]{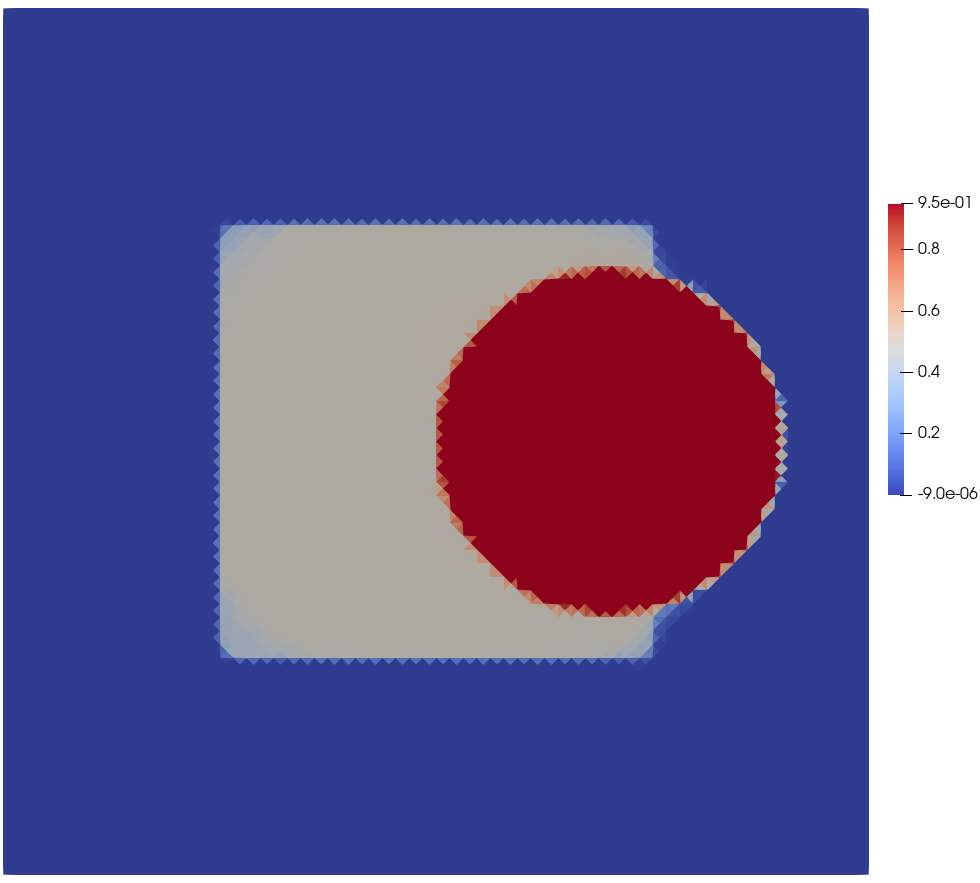}
\caption{Solution computed with the conforming finite element scheme (left) and the projected solution of the non-conforming finite element scheme (right).
}
\label{fig_finalu}
\end{figure}

\begin{figure}[!htp]
\center
\includegraphics[width=0.49\textwidth]{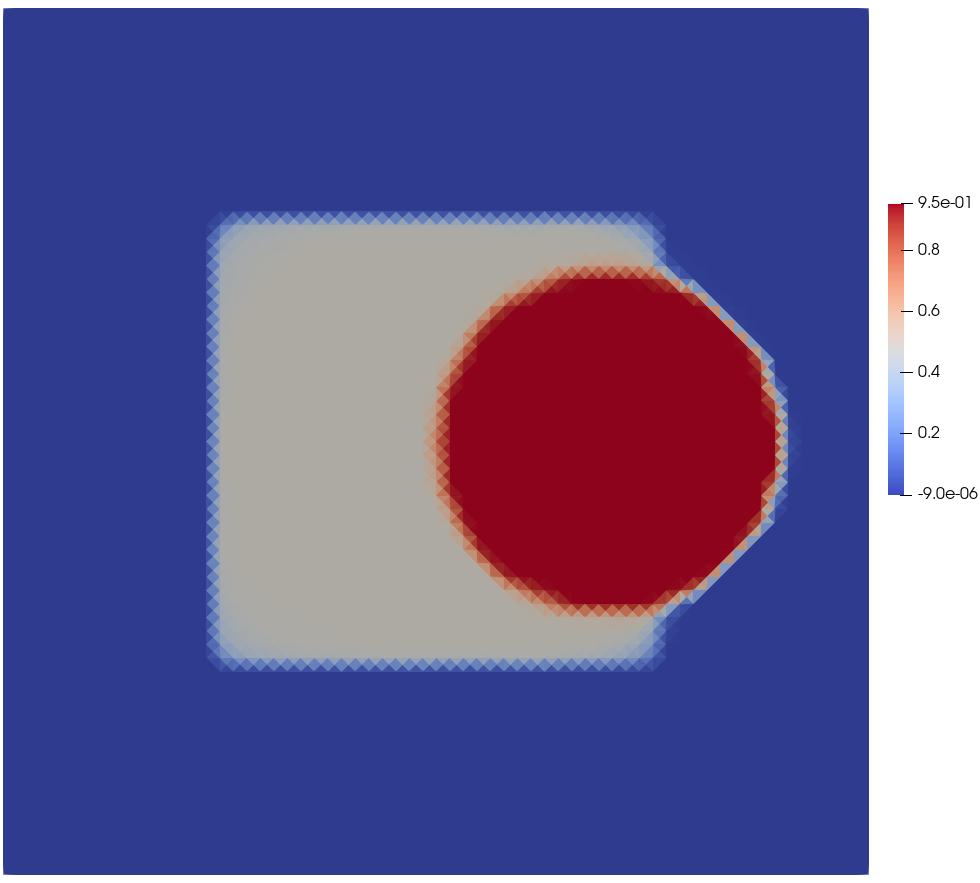}
\includegraphics[width=0.49\textwidth]{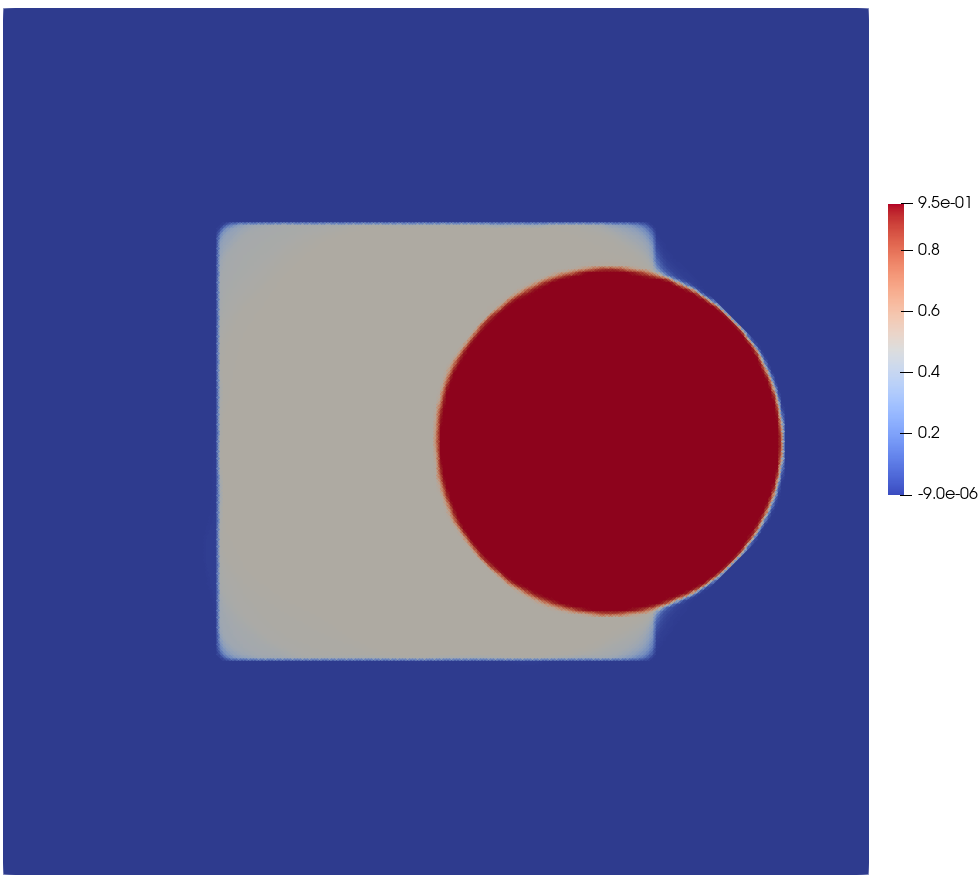}
\caption{Projected solution of the conforming finite element scheme with $\sigma=0$ for $h=2^{-6}$ (left) and $h=2^{-8}$ (right) at $T=0.1$.}
\label{fig_finalu_pw}
\end{figure}

\appendix

{

\section{Proofs of the results from Section \ref{SLCS}}

\subsection{Proof of Theorem \ref{comp_char}}\label{proof_comp_char}

The proof is analogous to that of the generalized Arzela-Ascoli theorem e.g. \cite[Theorem 7.6]{kelley}. Denote by $\tau_{\mathbf p}$ the topology of pointwise convergence on $Y^{[0,T]}$. Apparently, $\tau_{\mathbf p}\subseteq\tau_{\mathbf u}$. Basically, (i) yields that $\overline{M}^{\tau_{\mathbf p}}$ is compact in $Y^{[0,T]}$ by the Tychonoff theorem, and the traces of $\tau_{\mathbf p}$ and $\tau_{\mathbf u}$ coincide on $\overline{M}^{\tau_{\mathbf p}}$ by (ii). To see the latter, fix an absolutely convex neighbourhood of zero $O$ and get $\delta>0$ and $m\in\Bbb N$ from (ii). Let $D$ be a finite subset of $(T\Bbb Q)\cap[0,T]$ that contains all $t^n_j$ for $0\le j\le n\le m$, let $D$ intersect each non-empty intersection $(t^k_{i-1},t^k_i)\cap (t^l_{j-1},t^l_j)$ whenever $1\le i\le k\le m$, $1\le j\le l\le m$, and let $D$ be a $\delta$-net in $(t^k_{i-1},t^k_i)$ for every $1\le i\le k\le m$. With these preparations, if $f,g\in M$ are such that $f(r)-g(r)\in O$ for every $r\in D$ then $f(t)-g(t)\in 3O$ for every $t\in[0,T]$. Thus, if $f,g\in\overline{M}^{\tau_{\mathbf p}}$ are such that $f(r)-g(r)\in O$ for every $r\in D$ then $f(t)-g(t)\in 3\overline{O}$ for every $t\in[0,T]$. In particular, (ii) yields that $\tau_{\mathbf p}$ is stronger than $\tau_{\mathbf u}$ on $\overline M^{\tau_{\mathbf p}}$. But since $\tau_{\mathbf p}$ is weaker than $\tau_{\mathbf u}$, the topologies coincide on $\overline M^{\tau_{\mathbf p}}$. Now (ii) also yields 
\begin{equation}\label{seqinclinfty}
\overline M^{\tau_{\mathbf p}}=\bigcap_{n=1}^\infty\left\{\overline {M_n^\uparrow}^{\tau_{\mathbf p}}\cup\bigcup_{m=1}^{n-1}\overline {M\cap Q_m}^{\tau_{\mathbf p}}\right\}\subseteq Q_\infty\cup\bigcap_{n=1}^\infty\overline {M_n^\uparrow}^{\tau_{\mathbf p}}\subseteq Q_\infty\cup C([0,T];Y)=Q_c.
\end{equation}

The implication (iii) $\Rightarrow$ (i) is obvious and one gets (iii) $\Rightarrow$ (ii) by contradiction.

To prove (iii) $\Rightarrow$ (iv) and the assertion in Remark \ref{rem_4}, we are going to use only the fact that $f(s+)$ and $f(t-)$ exist for every $0\le s<t\le T$ and every $f$ in $\overline M$. For let $K$ be the closure of $M$ and define
$$
R=\left\{f(t-),f(t),f(t+):\,t\in[0,T],\,f\in K\right\}
$$
where $f(0-):=f(0)$ and $f(T_+):=f(T)$. The definition of $R$ is correct since we know by \eqref{seqinclinfty} that $K\subseteq Q_\infty\cup C([0,T];Y)$ if (iii) holds, or we refer to Remark \ref{rem_3_seq_comp}. Let us prove that $R$ is compact in $Y$. For let $\mathcal U$ be an ultrafilter in $R$ and define
$$
S_U=\{(t,f)\in[0,T]\times C:\,\{f(t-),f(t),f(t+)\}\cap U\ne\emptyset\}.
$$
Then $\{S_U:\,U\in\mathcal U\}$ is a basis of a filter in the compact space $[0,T]\times K$, and therefore it converges to some $(s,g)\in [0,T]\times K$. We conclude that
$$
\left[\left(g(s-)+O\right)\cup\left(g(s)+O\right)\cup\left(g(s+)+O\right)\right]\cap U\ne\emptyset
$$
holds for every $U\in\mathcal U$ and every neighbourhood $O$ of zero in $Y$. Since $\mathcal U$ is an ultrafilter,
$$
\left[\left(g(s-)+O\right)\cup\left(g(s)+O\right)\cup\left(g(s+)+O\right)\right]\cap R\in\mathcal U
$$
and so $\mathcal U$ converges to one of the elements in the set $\{g(s-),g(s),g(s+)\}$.

\subsection{Proof of Corollary \ref{cor_1_me}}\label{proof_cor_1_me}

Say that $f$ takes values in some compact $K$ for every $f\in M$, let $\{[|\cdot|_n<1]:\,n\in\Bbb N\}$ be a basis of absolutely convex open neighbourhoods of zero in the compact set 
$$
C=\bigcup_{\max\,\{|a|,|b|\}\le 1}(aK+bK)
$$
for some continuous pseudonorms $|\cdot|_n$ on $Y$ and define
$$
d(y_1,y_2)=\sum_{n=1}^\infty 2^{-n}\min\,\{1,|y_1-y_2|_n\},\qquad y_1,y_2\in Y.
$$
Then
\begin{equation}\label{metric_D}
D(f,g)=\sup\,\{d(f(t),g(t)):\,t\in[0,T]\},\qquad f,g\in Q([0,T];Y)
\end{equation}
metrizes the topology on $M$.

\subsection{Proof of Corollary \ref{cor_2_me}}\label{proof_cor_2_me}

It suffices to prove the assertion for compact sets $M$ in $Q_c([0,T];Y)$. The mapping $f\mapsto D(f,g)$ is $\mathcal Y_T$-measurable for every $g\in Q([0,T];Y)$ by Remark \ref{rem_1_ct}, hence the traces of $\mathscr B(Q([0,T];Y))$ and $\mathcal Y_T$ coincide on $M$ as $(M,D)$ is a separable metric space by Corollary \ref{cor_1_me}. Now it suffices to prove that $M$ itself belongs to $\mathcal Y_T$. According to Theorem \ref{comp_char}, there exist $\{m_n:\,n\in\Bbb N\}\subseteq\Bbb N$ and $\{\delta_n:\,n\in\Bbb N\}\subseteq(0,\infty)$ such that $M\subseteq R$ where
$$
R=\left[\bigcap_{t\in [0,T]}\pi_t^{-1}[K]\right]\cap\bigcap_{n=1}^\infty\left\{\left[\bigcup_{j=1}^{m_n}Q_j\right]\cup\left[\bigcap_{|t-s|\le\delta_n}\{f:\,|f(t)-f(s)|_n\le 1\}\right]\right\},
$$
and $K$ and $\{|\cdot|_n\}$ are the same as in the proof of Corollary \ref{cor_1_me}. But $R$ is closed (as an intersection of closed sets), relatively compact in $Q_c([0,T];Y)$ by Theorem \ref{comp_char} (hence compact), and $\mathcal Y_T$-measurable as
$$
R=\left[\bigcap_{t\in D_T}\pi_t^{-1}[K]\right]\cap\bigcap_{n=1}^\infty\left\{\left[\bigcup_{j=1}^{m_n}Q_j\right]\cup\left[\bigcap_{t,s\in D_T,\,|t-s|\le\delta_n}\{f:\,|f(t)-f(s)|_n\le 1\}\right]\right\}
$$
where $D_T=(T\Bbb Q)\cap[0,T]$. Thus the trace of $\mathscr B(Q([0,T];Y))$ on $R$ is a subset of $\mathcal Y_T$ and, in particular, $M\in\mathcal Y_T$.

\subsection{Proof of Proposition \ref{portmanteau}}\label{proof_portmanteau}

It suffices to prove the first assertion for $F_n$ and $F$ real-valued (otherwise compose theses functions with $x\mapsto\min\,\{x,m\}$ and then let $m\to\infty$). If $t\in(0,\infty)$ then set $R=(-\infty,t]$, and we have, for every $r\in(0,1)$,
$$
\mu_n(F_n\in R)\le r+\mu_n\left(\overline{\bigcup_{k=m}^\infty[F_k\in R]\cap K_{r,k}}\right),\qquad m\le n,
$$
so
\begin{align*}
\limsup\mu_n(F_n\in R)&\le r+\mu\left(\bigcap_{m=1}^\infty\overline{\bigcup_{k=m}^\infty[F_k\in R]\cap K_{r,k}}\right)
\\
&\le r+\mu(F\in R)+\mu^*(D_r)
\end{align*}
by the classical Portmanteau theorem, cf. \cite[Corollary 8.2.10]{bog}, hence
$$
\liminf\mu_n(F_n>t)\ge\mu(F>t)
$$
and therefore
$$
\int_XF\dd\mu=\int_0^\infty\mu(F>t)\,\dd t\le\liminf\int_0^\infty\mu_n(F_n>t)\,\dd t=\liminf\int_XF_n\,\dd\mu
$$
by the Fatou lemma. The second part of the proof is analogous but we take any closed set $R$. In this way, we get
$$
\limsup\mu_n(F_n\in R)\le\mu(F\in R)
$$
for every $R$ closed, therefore $\limsup\mu_n(F_n\in\cdot\,)\Rightarrow\mu(F\in\cdot\,)$. The first part of the proof now yields that $|F|$ is integrable with respect to $\mu$, and we get the claim by the assumption of uniform integrability of $|F_n|\,d\mu_n$.

\section{Bounded variation spaces}

\begin{lemma}\label{lem_lsc} The functional
$$
\mathcal I(u)=\sup\,\left\{\int_{\mathcal O}u\operatorname{div}\varphi\,\dd x:\,\varphi\in C^\infty(\Bbb R^d;\Bbb R^d),\,|\varphi|\le 1\right\},\qquad u\in\Bbb L^1
$$
satisfies
$$
\mathcal I(u)=\|\nabla u\|_{\operatorname{TV}(\mathcal{O})}+\int_{\partial\mathcal O}{|u|}\,\dd x,\qquad \mathrm{for}\, u\in BV(O)
$$
and $\mathcal I(u)=\infty$ for $u\in\Bbb L^1\setminus BV(O)$. In particular, $\mathcal I$ is lower semicontinuous on $(\Bbb L^1,\operatorname{weak})$ and convex on $BV(\mathcal{O})$ and $\mathcal J$ is lower weakly semicontinuous on $\Bbb L^2$ and convex on $\Bbb L^2\cap BV(\mathcal O)$.
\end{lemma}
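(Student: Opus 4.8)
The plan is to show that $\mathcal I(u)=\|D\tilde u\|(\mathbb R^d)$, where $\tilde u$ denotes the extension of $u$ by zero to all of $\mathbb R^d$, and then to read off every remaining assertion from this representation together with the elementary fact that $\mathcal I$ is a pointwise supremum of linear functionals.

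\textbf{Step 1 (the case $u\notin BV(\mathcal O)$).} First I would observe that every $\varphi\in C^\infty_c(\mathcal O;\mathbb R^d)$ extends by zero to an element of $C^\infty(\mathbb R^d;\mathbb R^d)$, so the supremum defining $\mathcal I(u)$ runs over a larger class than the one defining $\|\nabla u\|_{\operatorname{TV}(\mathcal O)}$ in \eqref{total variation}; hence $\mathcal I(u)\ge\|\nabla u\|_{\operatorname{TV}(\mathcal O)}$ for all $u\in\Bbb L^1$, and $\mathcal I(u)<\infty$ forces $u\in BV(\mathcal O)$. This gives $\mathcal I\equiv\infty$ on $\Bbb L^1\setminus BV(\mathcal O)$.

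\textbf{Step 2 (the case $u\in BV(\mathcal O)$).} Here I would use that the bounded polyhedral domain $\mathcal O$ is Lipschitz, so each $u\in BV(\mathcal O)$ has a trace $\gamma(u)\in\Bbb L^1(\partial\mathcal O;\mathcal H^{d-1})$, the zero-extension $\tilde u$ belongs to $BV(\mathbb R^d)$, and
$$
\|D\tilde u\|(\mathbb R^d)=\|\nabla u\|_{\operatorname{TV}(\mathcal O)}+\int_{\partial\mathcal O}|\gamma(u)|\,\dd\mathcal H^{d-1}
$$
(see e.g.\ \cite{AFP}). Since $\partial\mathcal O$ is Lebesgue-null and $\tilde u=0$ a.e.\ off $\mathcal O$, one has $\int_{\mathcal O}u\operatorname{div}\varphi\,\dd x=\int_{\mathbb R^d}\tilde u\operatorname{div}\varphi\,\dd x$ for all $\varphi\in C^\infty(\mathbb R^d;\mathbb R^d)$. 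Then I would remove the non-compact support by the standard cut-off trick: multiplying $\varphi$ by $\chi\in C^\infty_c(\mathbb R^d)$ with $0\le\chi\le1$ and $\chi\equiv1$ near $\overline{\mathcal O}$ leaves $\int_{\mathbb R^d}\tilde u\operatorname{div}\varphi\,\dd x$ unchanged (as $\tilde u(1-\chi)=0$ and $\tilde u\nabla\chi=0$ a.e.) and produces $\chi\varphi\in C^\infty_c(\mathbb R^d;\mathbb R^d)$ with $|\chi\varphi|\le1$; hence the two suprema, over $C^\infty(\mathbb R^d;\mathbb R^d)$ and over $C^\infty_c(\mathbb R^d;\mathbb R^d)$ with $|\varphi|\le1$, agree, and the latter is $\|D\tilde u\|(\mathbb R^d)$ by definition. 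Combining, $\mathcal I(u)=\|\nabla u\|_{\operatorname{TV}(\mathcal O)}+\int_{\partial\mathcal O}|\gamma(u)|\,\dd\mathcal H^{d-1}$, which is the claimed identity once $\int_{\partial\mathcal O}|u|\,\dd x$ is read as $\int_{\partial\mathcal O}|\gamma(u)|\,\dd\mathcal H^{d-1}$.

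\textbf{Step 3 (consequences).} For each fixed $\varphi$ the functional $u\mapsto\int_{\mathcal O}u\operatorname{div}\varphi\,\dd x$ is linear and, as $\operatorname{div}\varphi$ is bounded on the compact set $\overline{\mathcal O}$, continuous for the weak topology $\sigma(\Bbb L^1,\Bbb L^\infty)$; a pointwise supremum of such functionals is convex and weakly lower semicontinuous, so $\mathcal I$ is convex and weakly lsc on $\Bbb L^1$, in particular convex on $BV(\mathcal O)$. Since $|\mathcal O|<\infty$ gives $\operatorname{div}\varphi\in\Bbb L^2(\mathcal O)$, the same functionals are weakly continuous on $\Bbb L^2$, so $\mathcal I|_{\Bbb L^2}$ is convex and weakly lsc there. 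Finally, by Steps 1--2, $\mathcal J(u)=\mathcal I(u)+\tfrac\lambda2\|u-g\|^2$ for every $u\in\Bbb L^2$ (with $\mathcal I(u)=\infty$ off $BV(\mathcal O)$, consistent with the definition of $\mathcal J$); adding the convex weakly lsc map $u\mapsto\tfrac\lambda2\|u-g\|^2$, I conclude that $\mathcal J$ is weakly lsc on $\Bbb L^2$ and convex, hence convex on $\Bbb L^2\cap BV(\mathcal O)$.

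\textbf{Expected main obstacle.} The only non-formal input is the extension/trace identity quoted in Step 2 — that the zero-extension of a $BV$ function across a Lipschitz (here polyhedral) boundary is $BV$ with total variation equal to the interior variation plus the $\Bbb L^1$-mass of the trace. It is classical, but it is where the regularity of $\partial\mathcal O$, the $BV$ Gauss--Green formula, and $\mathcal H^{d-1}(\partial\mathcal O)<\infty$ actually enter. If one wished to avoid citing it, I would instead prove the two inequalities by hand: the upper bound from the Gauss--Green formula $\int_{\mathcal O}u\operatorname{div}\varphi\,\dd x=-\int_{\mathcal O}\varphi\cdot\dd Du+\int_{\partial\mathcal O}\gamma(u)(\varphi\cdot\nu)\,\dd\mathcal H^{d-1}$ together with $|\varphi\cdot\nu|\le|\varphi|\le1$, and the lower bound by testing with $\varphi=\psi+\eta$, where $\psi\in C^\infty_c(\mathcal O;\mathbb R^d)$ is nearly optimal for $\|Du\|(\mathcal O)$ and $\eta$ is supported in a thin collar $\{\operatorname{dist}(\cdot,\partial\mathcal O)<\rho\}$ with $\eta\cdot\nu\approx\operatorname{sgn}\gamma(u)$ on $\partial\mathcal O$, exploiting $\|Du\|(\{\operatorname{dist}(\cdot,\partial\mathcal O)<\rho\})\to0$ as $\rho\to0$ to absorb the cross term; in the polyhedral case a little extra care is needed near the lower-dimensional edges of $\partial\mathcal O$, where $\eta$ is let to vanish at an $\mathcal H^{d-1}$-negligible cost.
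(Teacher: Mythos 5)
Your proof is correct and, in substance, the same as the paper's: both rest on the BV Gauss--Green/trace theory on the Lipschitz (polyhedral) domain, with Step~1 matching the paper's appeal to \cite[Proposition 3.6]{AFP} and Step~3 matching the ``supremum of weakly continuous affine functionals'' conclusion. The only difference is packaging: you route the boundary term through the zero-extension identity $\mathcal I(u)=\|D\tilde u\|(\mathbb R^d)=\|\nabla u\|_{\operatorname{TV}(\mathcal O)}+\int_{\partial\mathcal O}|\gamma(u)|\,\dd\mathcal H^{d-1}$, whereas the paper applies the integration-by-parts formula directly and identifies $\mathcal I(u)$ with the total variation of the measure $\theta=u\nu\mathcal H_{d-1}|_{\partial\mathcal O}-\nabla u$ on $\overline{\mathcal O}$ via a density argument from continuous to Borel test fields; indeed, the ``by hand'' alternative you sketch at the end is essentially the paper's proof.
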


\begin{proof}
If $\mathcal I(u)<\infty$ then $u\in BV(O)$ e.g. by Proposition 3.6 in \cite{AFP}. If $u\in BV(O)$ then
$$
\int_{\mathcal O}u\operatorname{div}\varphi\,\dd x=\int_{\partial\mathcal O}u(\varphi,\nu)\,\dd S-\int_{\mathcal O}\varphi\cdot\,\dd\,\nabla u,\qquad\varphi\in C^\infty(\Bbb R^d;\Bbb R^d)
$$
where $\nu$ is the outer normal vector field on $\partial O$ by the integration by parts formula, see e.g. (3.85) in \cite{AFP}, so
$$
\mathcal I(u)=\sup\,\left\{\int_{\overline{\mathcal O}}\varphi\cdot\,\dd\,\theta:\,\varphi:\Bbb R^d\to\Bbb R^d\text{ Borel measurable},\,|\varphi|\le 1\right\}\,,
$$
by a standard density argument where $\theta=u\nu\mathcal H_{d-1}|_{\partial O}-\nabla u$. Hence
$$
\mathcal I(u)=\|\theta\|_{\operatorname{TV}(\overline{\mathcal O})}=\|\nabla u\|_{\operatorname{TV}(\mathcal O)}+\|u\nu\mathcal H_{d-1}\|_{\operatorname{TV}(\partial\mathcal O)}
=\|\nabla u\|_{\operatorname{TV}(\mathcal O)}+\int_{\partial\mathcal O}|u|\,\dd S.
$$
\end{proof}

\begin{remark}\label{funct_i_sup} There exists a countable subset $\mathcal H$ of $C^\infty(\Bbb R^d)$ such that
$$
\mathcal I(u)=\sup\,\left\{\int_{\mathcal O}u\phi\,\dd x:\,\phi\in\mathcal H\right\},\qquad u\in\Bbb L^1
$$
by separability of $\{\operatorname{div}\varphi:\,\varphi\in C^\infty(\Bbb R^d;\Bbb R^d),\,|\varphi|\le 1\}$ in $C^\infty(\Bbb R^d)$. 
\end{remark}

\section{Besov spaces}

\begin{lemma}\label{besov_discr} Let $Y$ be a Banach space and let $f:[0,T]\to Y$ be a continuous function linear on every $[t_i,t_{i+1}]$ for $i=0,\dots,N-1$ and define
$$
f_{i,a}=\left[\tau\sum_{j=i}^N\|f(t_{j})-f(t_{j-i})\|^a\right]^\frac{1}{a},\qquad f_{i,\infty}=\max_{i\le j\le N}\|f(t_{j})-f(t_{j-i})\|.
$$
Then
$$
\|f\|_{L^r(0,T)}\le\left[\sum_{i=0}^N\tau\|f(t_i)\|^r\right]^\frac{1}{r},\qquad\|f\|_{L^\infty(0,T)}\le\,\max_{0\le i\le N}\|f(t_i)\|
$$
$$
\quad [f]_{B^s_{p,q}}\le\frac{8}{s(1-s)}\left(\sum_{i=1}^{N-1}\tau\frac{f_{i,p}^q}{t_i^{1+sq}}\right)^\frac{1}{q},\quad [f]_{B^s_{p,\infty}}\le 3\max_{1\le i<N}\frac{f_{i,p}}{t_i^{s}}
$$
for every $s\in(0,1)$, $p\in[1,\infty]$ and $r,q\in[1,\infty)$.
\end{lemma}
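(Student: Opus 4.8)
The plan is to treat the two Lebesgue-type estimates, which are elementary, separately from the Besov seminorm estimates. For the first line one writes $f(t)=(1-\sigma)f(t_{j-1})+\sigma f(t_j)$ on $[t_{j-1},t_j]$ with $\sigma=(t-t_{j-1})/\tau$; convexity of $x\mapsto x^r$ gives $\|f(t)\|^r\le(1-\sigma)\|f(t_{j-1})\|^r+\sigma\|f(t_j)\|^r$, and integrating over $[t_{j-1},t_j]$ and summing in $j$ (each node being charged at most twice, each time with weight $\tfrac12$) yields $\|f\|_{L^r(0,T)}^r\le\tau\sum_{i=0}^N\|f(t_i)\|^r$, while $\|f(t)\|\le\max\{\|f(t_{j-1})\|,\|f(t_j)\|\}$ gives the $L^\infty$ bound.

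For the Besov seminorm I would use the intrinsic realization $[f]_{B^s_{p,q}}^q=\int_0^T\big(h^{-s}\,\omega_p(h)\big)^q\,h^{-1}\,dh$, $\omega_p(h):=\|f(\cdot+h)-f\|_{L^p(0,T-h)}$ (the supremum over $h$ when $q=\infty$), and reduce everything to two estimates on $\omega_p$. \emph{Small shifts:} since $f$ is piecewise linear it is absolutely continuous with $f'=\tau^{-1}(f(t_j)-f(t_{j-1}))$ on $(t_{j-1},t_j)$, so $f(t+h)-f(t)=\int_t^{t+h}f'$; Jensen and Tonelli then give $\omega_p(h)\le h\,\|f'\|_{L^p(0,T)}=(h/\tau)\,f_{1,p}$ for every $h\in(0,T]$ (with $f_{1,\infty}$ when $p=\infty$). \emph{Integer shifts:} for an integer $m\ge0$ one has $\|f(\cdot+m\tau)-f\|_{L^p(0,T-m\tau)}\le f_{m,p}$. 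This is the one non-routine point: translating a piecewise-linear function by the integer multiple $m\tau$ of the mesh leaves the barycentric weights unchanged, so on $[t_{j-1},t_j]$ there holds the exact identity
\[
f(t+m\tau)-f(t)=(1-\sigma)\big(f(t_{j-1+m})-f(t_{j-1})\big)+\sigma\big(f(t_{j+m})-f(t_j)\big),
\]
a convex combination of two $m$-step nodal differences; raising to the $p$th power by convexity, integrating in $\sigma$, and summing the two resulting sums over $j$ (each bounded by the full $m$-step sum) gives $f_{m,p}^p$, with the maximum replacing the convex combination when $p=\infty$.

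Combining the two estimates through translation invariance of the $L^p(0,T-h)$-norm, for $h\in[m\tau,(m+1)\tau]$ with $m\ge1$ the splitting $f(\cdot+h)-f=(f(\cdot+h)-f(\cdot+m\tau))+(f(\cdot+m\tau)-f)$ gives $\omega_p(h)\le f_{1,p}+f_{m,p}$. One then assembles the seminorm by writing $\int_0^T=\int_0^\tau+\sum_{m=1}^{N-1}\int_{m\tau}^{(m+1)\tau}$. On $(0,\tau)$ the small-shift bound yields $\int_0^\tau\omega_p(h)^qh^{-1-sq}\,dh\le\tau^{-q}f_{1,p}^q\int_0^\tau h^{q(1-s)-1}\,dh=\tfrac{1}{q(1-s)}\,\tfrac{\tau f_{1,p}^q}{t_1^{1+sq}}$, which is where the factor $(1-s)^{-1}$ enters. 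On $[m\tau,(m+1)\tau]$, using $\omega_p(h)\le f_{1,p}+f_{m,p}$, the inequality $(a+b)^q\le 2^{q-1}(a^q+b^q)$, and $\int_{m\tau}^{(m+1)\tau}h^{-1-sq}\,dh\le\tau\,t_m^{-1-sq}$ for the $f_{m,p}$-part, one gets a contribution $\le 2^{q-1}\tau f_{m,p}^q/t_m^{1+sq}$ together with the term $2^{q-1}f_{1,p}^q\int_{m\tau}^{(m+1)\tau}h^{-1-sq}\,dh$, whose sum over $m$ telescopes to at most $\tfrac{2^{q-1}}{sq}\,\tau^{-sq}f_{1,p}^q=\tfrac{2^{q-1}}{sq}\,\tfrac{\tau f_{1,p}^q}{t_1^{1+sq}}$ — the source of $s^{-1}$.

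Collecting the three contributions and using subadditivity of $x\mapsto x^{1/q}$ together with the crude bounds $q^{1/q}\ge1$, $(sq)^{1/q}\ge s$, $2^{(q-1)/q}\le 2$ produces $[f]_{B^s_{p,q}}\le\big(\tfrac{1}{1-s}+\tfrac{2}{s}+2\big)\big(\sum_{i=1}^{N-1}\tau f_{i,p}^q/t_i^{1+sq}\big)^{1/q}\le\tfrac{8}{s(1-s)}\big(\sum_{i=1}^{N-1}\tau f_{i,p}^q/t_i^{1+sq}\big)^{1/q}$; the same splitting with suprema (and $h^{1-s}\le\tau^{1-s}$ on $(0,\tau)$, $t_m\ge t_1$ for $m\ge1$) gives $[f]_{B^s_{p,\infty}}\le 2\max_{1\le i<N}f_{i,p}/t_i^s\le 3\max_{1\le i<N}f_{i,p}/t_i^s$. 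The main obstacle is really just spotting the convex-combination identity in the integer-shift step and then book-keeping the two endpoint regimes ($h\to0$ and $m\to\infty$) carefully enough for the constant to come out as $\tfrac{8}{s(1-s)}$.
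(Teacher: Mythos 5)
Your proof is correct, and since the paper states Lemma~\ref{besov_discr} in the appendix without any proof, there is nothing to compare it against; your argument (the exact convex-combination identity $f(t+m\tau)-f(t)=(1-\sigma)\bigl(f(t_{j-1+m})-f(t_{j-1})\bigr)+\sigma\bigl(f(t_{j+m})-f(t_j)\bigr)$ for integer shifts, the derivative bound $\omega_p(h)\le (h/\tau)f_{1,p}$ for sub-mesh shifts, and the assembly over $h\in[m\tau,(m+1)\tau]$) is the natural route and the bookkeeping checks out, with your constants $\tfrac{1}{1-s}+\tfrac{2}{s}+2$ and $2$ safely below the stated $\tfrac{8}{s(1-s)}$ and $3$. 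Two minor caveats worth recording: the constant depends on which normalization of $[\,\cdot\,]_{B^s_{p,q}}$ one adopts (the Gagliardo double-integral form used for $W^{\alpha,p}=B^\alpha_{p,p}$ in Lemma~\ref{mod_cont_m1} carries an extra factor $2^{1/p}$ relative to your one-sided realization, which your margin absorbs), and both your proof and the lemma itself implicitly require $N\ge 2$ so that the $i=1$ term appears in $\sum_{i=1}^{N-1}$.
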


}

\bibliographystyle{plain}
\bibliography{references}

\end{document}